\newtheorem*{rep@theorem}{\rep@title}
\newcommand{\newreptheorem}[2]{%
	\newenvironment{rep#1}[1]{%
		\def\rep@title{#2 \ref{##1}}%
		\begin{rep@theorem}}%
		{\end{rep@theorem}}}
\numberwithin{equation}{section}
\newtheorem{theorem}{Theorem}[section]
\newtheorem{proposition}[theorem]{Proposition}
\newtheorem{corollary}[theorem]{Corollary}
\newtheorem{conjecture}[theorem]{Conjecture}
\newtheorem{lemma}[theorem]{Lemma}
\theoremstyle{definition}
\DeclareMathOperator\lk{\mathrm{lk}}
\DeclareMathOperator\cost{\mathrm{cost}}
\DeclareMathOperator{\Hilb}{\mathrm{Hilb}}
\DeclareMathOperator{\Coker}{\mathrm{Coker}}
\DeclareMathOperator{\supp}{\mathrm{supp}}
\newcommand{\field}{\mathbbm{k}}
\newcommand{\Z}{{\mathcal Z}}
\newcommand{\ZZ}{{\mathbb Z}}
\newcommand{\C}{{\mathcal C}}
\newcommand{\K}{{\mathcal K}}
\newcommand{\M}{{\mathcal M}}
\newcommand{\N}{{\mathcal N}}
\newcommand{\Bb}{{\mathcal B}}
\newcommand{\LL}{{\mathcal L}}
\newcommand{\D}{{\mathcal D}}
\newcommand{\alg}{{\mathfrak{a}}}
\newcommand{\sig}{{\mathfrak{s}}}
\newcommand{\im}{\operatorname{Im}}
\newcommand{\mideal}{\ensuremath{\mathfrak{m}}}
\newcommand{\Ker}{\ensuremath{\mathrm{Ker}}\hspace{1pt}}
\tikzset{
	labl/.style={anchor=south, rotate=90, inner sep=.5mm}
}
\title{Algebraic $h$-vectors of simplicial complexes through local cohomology, part 1}
\author{Connor Sawaske\\
	\small \texttt{connor@sawaske.com}
}
\begin{document}
	%%%%%%%%%%%%%%%%%%%%%%%%%%%%%%%%%%%%%%%%%%%%
	%%%%%%%%%%%%%%%%%%%%%%%%%%%%%%%%%%%%%%%%%%%%
	\maketitle
	
	\begin{abstract}
	Given an infinite field $\field$ and a simplicial complex $\Delta$, a common theme in studying the $f$- and $h$-vectors of $\Delta$ has been the consideration of the Hilbert series of the Stanley--Reisner ring $\field[\Delta]$ modulo a generic linear system of parameters $\Theta$. Historically, these computations have been restricted to special classes of complexes (most typically triangulations of spheres or manifolds). We provide a compact topological expression of $h_{d-1}^\alg(\Delta)$, the dimension over $\field$ in degree $d-1$ of $\field[\Delta]/(\Theta)$, for any complex $\Delta$ of dimension $d-1$. In the process, we provide tools and techniques for the possible extension to other coefficients in the Hilbert series.
	\end{abstract}

\section{Introduction}\label{sect:intro}
Associated to every finite simplicial complex $\Delta$ is the notion of its $h$-vector, which is one way of encoding the number of faces that $\Delta$ has in each dimension. Perhaps the most widely-studied combinatorial invariant of a simplicial complex since its inception, properties of this vector continue to be a motivating force in research to this day (see, for example, the continued work toward proving McMullen's long-standing $g$-conjecture in \cite{Adip}).

Given a fixed infinite field $\field$, one of the most powerful tools available for the study of the $h$-vector of $\Delta$ is the Stanley--Reisner ring $\field[\Delta]$. When considered as a vector space over $\field$, this quotient of a polynomial ring has a basis given by monomials whose supports correspond to the faces of $\Delta$. Hence, computations of the $h$-vector can often be reduced to counting dimensions of graded pieces of $\field[\Delta]$ over $\field$.

We introduce two brief non-standard pieces of notation: if $\Theta$ is a linear system of parameters (or l.s.o.p.) for $\field[\Delta]$ and $\Sigma(\Theta; \field[\Delta])$ is the sigma submodule which it generates (definitions to be given later), then let
\[
h_i^\alg(\Delta):=\dim_\field\left(\frac{\field[\Delta]}{(\Theta)}\right)_i\,\,\text{ and }\,\,\,
h_i^\sig(\Delta):=\dim_\field\left(\frac{\field[\Delta]}{\Sigma(\Theta; \field[\Delta])}\right)_i.
\]
The $h^\alg$- and $h^\sig$-vectors are invariant under a generic choice of parameters when $\field$ is infinite, and hence are defined without respect to any particular system $\Theta$. In the literature, these dimensions have usually been written as (or shown to be equivalent to) $h_i'(\Delta)$ and $h_i''(\Delta)$, respectively, when considering triangulated spheres or manifolds.

When $\Delta$ is Cohen--Macaulay (for example, a triangulation of sphere), the following theorem due to Stanley demonstrates the powerful use of techniques from commutative algebra to produce a beautiful correspondence between the $h$-vector of $\Delta$ and the Hilbert series of $\field[\Delta]$. Note that the Cohen--Macaulay property ensures that the submodules $\Sigma(\Theta; \field[\Delta])$ and $(\Theta)$ are equal, and hence $h_i^\alg(\Delta) = h_i^\sig(\Delta)$ for all $i$.

\begin{theorem}{\rm \cite[Section II]{St-96}}\label{Stanley}
	Let $\Delta$ be a $(d-1)$-dimensional Cohen-Macaulay simplicial complex and let $\Theta$ be an l.s.o.p.~for $\field[\Delta]$. Then
	\[
		h_i^\alg(\Delta) = h_i(\Delta)
	\]
	for $i=0, \ldots, d$.
\end{theorem}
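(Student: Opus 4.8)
The plan is to reduce the statement to a single Hilbert series computation, exploiting the fact that over a Cohen--Macaulay ring a linear system of parameters behaves as well as it possibly could. First I would recall the combinatorial description of $\field[\Delta]$ as a $\field$-vector space: it has a basis consisting of monomials whose supports are faces of $\Delta$, which immediately yields the classical formula
\[
F(\field[\Delta], t) = \sum_{i=-1}^{d-1} f_i(\Delta)\,\frac{t^{i+1}}{(1-t)^{i+1}},
\]
where $f_i(\Delta)$ counts the $i$-dimensional faces. A routine manipulation of this rational function rewrites it as $\left(\sum_{i=0}^d h_i(\Delta)\,t^i\right)/(1-t)^d$, and this is precisely the defining identity of the $h$-vector; so at this level it is a matter of unwinding notation rather than proving anything.

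The substantive input is that $\field[\Delta]$ has Krull dimension $d$ and, being Cohen--Macaulay, has depth $d$ as well. The next step is to invoke the standard fact that in a Cohen--Macaulay graded $\field$-algebra every homogeneous system of parameters is a regular sequence; in particular the $d$ linear forms $\theta_1, \ldots, \theta_d$ comprising $\Theta$ form a regular sequence on $\field[\Delta]$. I would then proceed one form at a time: if $R$ is a graded $\field$-algebra and $\theta \in R_1$ is a nonzerodivisor, then the short exact sequence $0 \to R(-1) \xrightarrow{\theta} R \to R/\theta R \to 0$ gives $F(R/\theta R, t) = (1-t)\,F(R, t)$. Applying this identity $d$ times along the regular sequence $\Theta$ yields
\[
F\!\left(\frac{\field[\Delta]}{(\Theta)}, t\right) = (1-t)^d\, F(\field[\Delta], t) = \sum_{i=0}^d h_i(\Delta)\, t^i.
\]

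Reading off the coefficient of $t^i$ gives $h_i^\alg(\Delta) = \dim_\field\bigl(\field[\Delta]/(\Theta)\bigr)_i = h_i(\Delta)$ for every $i = 0, \ldots, d$, and since the right-hand polynomial has degree at most $d$ this simultaneously records that the quotient is concentrated in degrees $\le d$. The only place where genuine care is needed --- everything else being bookkeeping --- is the claim that an l.s.o.p.\ in a Cohen--Macaulay ring is automatically a regular sequence; and even that is a textbook consequence of the characterization of depth through maximal regular sequences together with the equality $\depth \field[\Delta] = \dim \field[\Delta] = d$. Because $\field$ is infinite a linear system of parameters exists, so the statement is not vacuous.
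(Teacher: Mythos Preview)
Your argument is correct and is exactly the standard proof: the paper itself does not prove this result but cites it from Stanley's book, where the same Hilbert-series computation via regularity of the l.s.o.p.\ in a Cohen--Macaulay ring is carried out. One could note in passing that the paper's Lemma~\ref{HilbertSeries} is the general version of your identity, with the extra kernel terms $\K^0(j)$ vanishing precisely under the Cohen--Macaulay hypothesis.
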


When moving from spheres to manifolds, it turns out that there is some discrepancy between $h_i^\alg(\Delta)$ and $h_i(\Delta)$ which can be measured by topological invariants of $\Delta$. For the next results, let $\widetilde{\beta}_i(\Delta)$ denote the $i$-th reduced Betti number of $\Delta$ computed over $\field$. In order to more succinctly express some pre-existing results as well as our own contributions, we now introduce the notion of the truncated reduced Euler characteristic of $\Delta$, denoted by
\begin{equation}\label{truncdef}
\widetilde{\chi}_i(\Delta):=\sum_{j=-1}^i(-1)^j\widetilde{\beta}_j(\Delta).
\end{equation}

Schenzel generalized Stanley's equality in Theorem \ref{Stanley} to the case of triangulations of closed manifolds (or more generally, Buchsbaum complexes) by applying Hochster's results connecting local cohomology modules of Stanley--Reisner rings to the topology of $\Delta$, producing the following theorem.

\begin{theorem}\label{Schenzel}{\rm\cite[p.~137]{Schenzel}}
	If $\Delta$ is a $(d-1)$-dimensional Buchsbaum complex and $\Theta$ is an l.s.o.p.~for $\field[\Delta]$, then
	\[
	h_i^\alg(\Delta) = h_i(\Delta)+(-1)^i\binom{d}{i}\widetilde{\chi}_{i-2}(\Delta)
	\]
	for $i=0, \ldots, d$.
\end{theorem}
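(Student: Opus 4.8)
The plan is to compute the Hilbert series of $\field[\Delta]/(\Theta)$ directly, the inputs being Hochster's formula for the local cohomology of a Stanley--Reisner ring together with the cohomological characterization of the Buchsbaum property. First I would record what that hypothesis buys: for $i<d$ the module $H^i_{\mathfrak{m}}(\field[\Delta])$ is a finite-dimensional $\field$-vector space concentrated in internal degree $0$, and Hochster's formula identifies its dimension there as $\widetilde{\beta}_{i-1}(\Delta)$. In particular (excluding the trivial case $\Delta=\{\emptyset\}$) $H^0_{\mathfrak{m}}(\field[\Delta])=0$, so $\field[\Delta]$ has positive depth.

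Next I would peel off the l.s.o.p.\ one linear form at a time. Writing $\Theta=\theta_1,\dots,\theta_d$ and $R^{(k)}=\field[\Delta]/(\theta_1,\dots,\theta_k)$, the four-term exact sequence
\[
0\to (0:_{R^{(k-1)}}\theta_k)(-1)\to R^{(k-1)}(-1)\xrightarrow{\ \theta_k\ } R^{(k-1)}\to R^{(k)}\to 0
\]
gives $\Hilb(R^{(k)};t)=(1-t)\,\Hilb(R^{(k-1)};t)+t\,\Hilb(0:_{R^{(k-1)}}\theta_k;\,t)$. Since every l.s.o.p.\ of a Buchsbaum module is a weak sequence, $0:_{R^{(k-1)}}\theta_k=0:_{R^{(k-1)}}\mathfrak{m}=H^0_{\mathfrak{m}}(R^{(k-1)})$. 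The remaining point is to compute $\Hilb(H^0_{\mathfrak{m}}(R^{(k-1)});t)$, which I would do by feeding these short exact sequences into the long exact sequence in local cohomology and inducting on $k$: because the relevant modules are annihilated by $\mathfrak{m}$ and live in a bounded band of internal degrees, the connecting maps collapse and one gets a short exact sequence $0\to H^i_{\mathfrak{m}}(R^{(k-1)})\to H^i_{\mathfrak{m}}(R^{(k)})\to H^{i+1}_{\mathfrak{m}}(R^{(k-1)})(-1)\to 0$, which by Pascal's rule propagates to $\Hilb(H^0_{\mathfrak{m}}(R^{(k-1)});t)=\sum_{j=0}^{k-1}\binom{k-1}{j}\widetilde{\beta}_{j-1}(\Delta)\,t^j$.

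Finally I would unroll the recursion. Substituting the last formula and using $(1-t)^d\Hilb(\field[\Delta];t)=\sum_i h_i(\Delta)t^i$ yields
\[
\Hilb\!\left(\field[\Delta]/(\Theta);t\right)=\sum_i h_i(\Delta)\,t^i+\sum_{j\ge 0}\widetilde{\beta}_{j-1}(\Delta)\,t^{j+1}\sum_{k=j+1}^{d}(1-t)^{d-k}\binom{k-1}{j}.
\]
The elementary identity $\sum_{k=j+1}^{d}(1-t)^{d-k}\binom{k-1}{j}=(-1)^{j+1}\sum_{i=j+1}^{d}(-1)^i\binom{d}{i}t^{i-j-1}$ (an induction on $d$) then rewrites the correction term as $\sum_i(-1)^i\binom{d}{i}t^i\bigl(-\sum_{j=0}^{i-1}(-1)^j\widetilde{\beta}_{j-1}(\Delta)\bigr)=\sum_i(-1)^i\binom{d}{i}\widetilde{\chi}_{i-2}(\Delta)\,t^i$, and comparing coefficients of $t^i$ finishes the proof.

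The hard part will be the inductive computation of $\Hilb(H^0_{\mathfrak{m}}(R^{(k)});t)$: the quotients $R^{(k)}$ are no longer Buchsbaum once $k\ge 1$, so one must track carefully how each hyperplane section transforms the lower local cohomology --- concretely, one needs the multiplication-by-$\theta_k$ maps on the lower local cohomology of $R^{(k-1)}$ to vanish, i.e.\ the $R^{(k)}$ to be quasi-Buchsbaum. This is the technical heart of Schenzel's argument and is exactly where one leans on the standard theory of Buchsbaum modules and their behavior under generic hyperplane sections; the binomial identity in the last step is the only other computation requiring (routine) care.
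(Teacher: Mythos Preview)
The paper does not prove this theorem; it is cited from Schenzel's original work. Your proposal is essentially the classical Schenzel argument and is correct. In fact, the paper's Section~3 machinery (Lemmas~\ref{primeavoidance}--\ref{diagramlemma}) is precisely a generalization of your approach to arbitrary complexes: Lemma~\ref{HilbertSeries} is your Hilbert-series recursion, Lemma~\ref{primeavoidance} is your identification of $0:_{R^{(k-1)}}\theta_k$ with a submodule of $H^0_{\mathfrak m}(R^{(k-1)})$, and Lemma~\ref{LESLemma} is the long exact sequence you use to propagate the local cohomology computation. In the Buchsbaum case the multiplication maps $\cdot\theta_j$ on $\M^i(S)$ vanish for $i<d-|S|$, which collapses that long exact sequence into your short exact sequences and yields the binomial recursion directly; this is exactly the ``trivial module structure'' the paper repeatedly invokes when contrasting the Buchsbaum situation with the general one.

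One small correction: your closing worry that ``the quotients $R^{(k)}$ are no longer Buchsbaum once $k\ge 1$'' is unfounded. It is a standard theorem (St\"uckrad--Vogel) that if $M$ is Buchsbaum and $\theta$ is a parameter element, then $M/\theta M$ is again Buchsbaum; so each $R^{(k)}$ is Buchsbaum, $\mathfrak m$ annihilates all of its lower local cohomology, and the vanishing of the $\cdot\theta_{k+1}$ maps you need is automatic. This actually simplifies the ``hard part'' you flagged: no separate quasi-Buchsbaum argument is required.
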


Much more recently, Murai, Novik, and Yoshida were able to build upon the equality in the above theorem through the use of the sigma submodule $\Sigma(\Theta; \field[\Delta])$, providing a calculation of the reduced algebraic $h$-vector in the next theorem.

\begin{theorem}\label{BBMD}{\rm\cite[Theorem 1.2]{MNY}}
	If $\Delta$ is a $(d-1)$-dimensional Buchsbaum complex and $\Theta$ is an l.s.o.p.~for $\field[\Delta]$, then
	\[
	h_i^\sig(\Delta) = h_i(\Delta)+(-1)^i\binom{d}{i}\widetilde{\chi}_{i-1}(\Delta).
	\]
\end{theorem}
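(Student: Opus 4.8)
The plan is to combine Schenzel's formula (Theorem~\ref{Schenzel}) with a computation of the Hilbert function of $\Sigma(\Theta;\field[\Delta])/(\Theta)$. Since $(\Theta)\subseteq\Sigma(\Theta;\field[\Delta])$, the ring $\field[\Delta]/\Sigma(\Theta;\field[\Delta])$ is a graded quotient of $\field[\Delta]/(\Theta)$, so that $h_i^\alg(\Delta)-h_i^\sig(\Delta)=\dim_\field\bigl(\Sigma(\Theta;\field[\Delta])/(\Theta)\bigr)_i$. Comparing the asserted equality with Theorem~\ref{Schenzel} and using $\widetilde{\chi}_{i-1}(\Delta)-\widetilde{\chi}_{i-2}(\Delta)=(-1)^{i-1}\widetilde{\beta}_{i-1}(\Delta)$, which is immediate from \eqref{truncdef}, one sees that the theorem is equivalent to the single assertion
\[
\dim_\field\bigl(\Sigma(\Theta;\field[\Delta])/(\Theta)\bigr)_i=\binom{d}{i}\widetilde{\beta}_{i-1}(\Delta)\qquad\text{for all }i.
\]
In this way Schenzel's theorem together with a one-line binomial identity reduces everything to a Hilbert-function computation that takes place entirely inside the module $\Sigma(\Theta;\field[\Delta])/(\Theta)$.

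To carry out that computation I would track how the finite-length local cohomology of $\field[\Delta]$ propagates through the successive l.s.o.p.\ reductions. Write $A^{(j)}=\field[\Delta]/(\theta_1,\dots,\theta_j)$, so $A^{(0)}=\field[\Delta]$ and $A^{(d)}=\field[\Delta]/(\Theta)$; by construction $\Sigma(\Theta;\field[\Delta])$ is assembled from $(\Theta)$ by lifting, at each stage, the $\mathfrak{m}$-torsion submodule $H^{0}_{\mathfrak{m}}(A^{(j)})$ of the $j$-th quotient. The structural inputs I would use are: (i) for a generic $\Theta$, each $A^{(j)}$ with $j<d$ is again Buchsbaum, hence $\mathfrak{m}$ annihilates $H^{p}_{\mathfrak{m}}(A^{(j)})$ for every $p<\dim A^{(j)}$; (ii) therefore each $\theta_{j+1}$ acts as $0$ on $H^{p}_{\mathfrak{m}}(A^{(j)})$ for $p<\dim A^{(j)}$, so that the four-term exact sequence
\[
0\to(0:_{A^{(j)}}\theta_{j+1})(-1)\to A^{(j)}(-1)\xrightarrow{\,\theta_{j+1}\,}A^{(j)}\to A^{(j+1)}\to 0,
\]
together with the associated long exact sequence in local cohomology, decomposes in the relevant range into short exact sequences $0\to H^{p}_{\mathfrak{m}}(A^{(j)})\to H^{p}_{\mathfrak{m}}(A^{(j+1)})\to H^{p+1}_{\mathfrak{m}}(A^{(j)})(-1)\to 0$, while $(0:_{A^{(j)}}\theta_{j+1})=H^{0}_{\mathfrak{m}}(A^{(j)})$ for generic $\theta_{j+1}$ --- this last torsion kernel being exactly what the construction of $\Sigma$ is designed to remember. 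Feeding these facts into an induction on $d$ (equivalently, reading off the recursive definition of $\Sigma$ stage by stage) produces a Pascal's-triangle array of degree-shifted copies of the modules $H^{p}_{\mathfrak{m}}(\field[\Delta])$; a hockey-stick summation of the form $\sum_{j}\binom{j}{m}=\binom{d}{m+1}$ then collapses the multiplicities into the single coefficient $\binom{d}{i}$, giving $\dim_\field\bigl(\Sigma(\Theta;\field[\Delta])/(\Theta)\bigr)_i=\binom{d}{i}\dim_\field H^{i}_{\mathfrak{m}}(\field[\Delta])_{0}$.

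It remains to invoke Hochster's formula: for any Stanley--Reisner ring one has $H^{p}_{\mathfrak{m}}(\field[\Delta])_{0}\cong\widetilde{H}^{\,p-1}(\Delta;\field)$, and --- this is precisely the force of the Buchsbaum hypothesis --- $H^{p}_{\mathfrak{m}}(\field[\Delta])$ is concentrated in internal degree $0$ for every $p<d$. Hence $\dim_\field H^{i}_{\mathfrak{m}}(\field[\Delta])_{0}=\widetilde{\beta}_{i-1}(\Delta)$, and the previous paragraph delivers $\dim_\field\bigl(\Sigma(\Theta;\field[\Delta])/(\Theta)\bigr)_i=\binom{d}{i}\widetilde{\beta}_{i-1}(\Delta)$, as required.

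The step I expect to be the main obstacle is the middle one: unwinding the definition of $\Sigma(\Theta;\field[\Delta])$ and keeping precise track of internal gradings through the iteration, since each reduction by $\theta_{j+1}$ shifts degrees by one and a single misplaced shift would corrupt the final count. Compounding this, once $H^{1}_{\mathfrak{m}}(\field[\Delta])\neq 0$ the maps $\theta_{j+1}$ cease to be nonzerodivisors, the relevant sequences genuinely have four terms, and the extra torsion kernels are exactly the pieces that $\Sigma$ must carry; and the top cohomological degree needs separate handling, because $H^{d}_{\mathfrak{m}}(\field[\Delta])$ is not of finite length and the long exact sequences no longer split there --- this is where the delicate endpoint of the formula resides. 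One must also verify that the recursively built $\Sigma(\Theta;\field[\Delta])$ interacts with the l.s.o.p.\ reduction in the clean manner used above, and that the resulting dimensions are genuinely independent of the (generic) choice of $\Theta$, as the notation $h_i^\sig(\Delta)$ presupposes; both are places where the hypothesis that $\field$ is infinite enters.
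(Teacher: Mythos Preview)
The paper does not prove Theorem~\ref{BBMD}: it is quoted from \cite[Theorem~1.2]{MNY} as background, so there is no ``paper's own proof'' to compare against. Your outline is essentially the strategy of the original Murai--Novik--Yoshida argument: reduce via Schenzel's theorem to the assertion $\dim_\field(\Sigma(\Theta;\field[\Delta])/(\Theta))_i=\binom{d}{i}\widetilde{\beta}_{i-1}(\Delta)$, then exploit the Buchsbaum hypothesis (each $H^p_\mathfrak{m}(A^{(j)})$ for $p<\dim A^{(j)}$ is annihilated by $\mathfrak{m}$, hence by every $\theta$) to split the local-cohomology long exact sequences, obtain a Pascal-triangle recursion, and read off the answer from Hochster/Gr\"abe. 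One caution: $\Sigma(\Theta;\field[\Delta])$ is defined non-recursively in this paper (as a sum of colon ideals over all $\theta_j$ at once), so the identification of $\Sigma(\Theta;\field[\Delta])/(\Theta)$ with the accumulated images of the $H^0_\mathfrak{m}(A^{(j)})$'s that you invoke is itself a lemma that needs to be stated and proved; this is exactly where \cite{MNY} does the real work, and where your sketch is thinnest.
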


The Cohen--Macaulay and Buchsbaum hypotheses of these theorems can be very restrictive, and the tools involved in their proofs rely crucially upon powerful algebraic implications of these properties. As a result, few extensions to more general cases have been provided and those that do exist are still fairly specialized (for example, see \cite{NS-sing} for a treatment of complexes with isolated singularities). Our present overarching goal is to provide full computations of the entries in the algebraic $h$-vectors for \textit{any} complex $\Delta$. Aside from low-dimensional special cases, at this time only the top entry, $h_{d}^\alg(\Delta)$, for $\Delta$ of arbitrary dimension $d-1$ has been computed in the following result due to Tay, White, and Whitely in \cite[Theorem 4.1]{TWW} as well as Babson and Novik in \cite[Lemma 2.2(3)]{Nongeneric}.

\begin{theorem}
	If $\Delta$ is a $(d-1)$-dimensional complex, then $h_d^\alg(\Delta)=\widetilde{\beta}_{d-1}(\Delta)$.
\end{theorem}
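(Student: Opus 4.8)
The plan is to prove the equivalent statement $\dim_\field (A/\Theta A)_d = \widetilde\beta_{d-1}(\Delta)$, where $A:=\field[\Delta]$, $d=\dim A$, $\Theta=\theta_1,\dots,\theta_d$ is a generic l.s.o.p., and $\mideal$ denotes the graded maximal ideal. I would peel off the parameters one at a time: set $M^{(0)}:=A$ and $M^{(i)}:=M^{(i-1)}/\theta_i M^{(i-1)}$, so that $\dim M^{(i)}=d-i$ and $M^{(d)}=A/\Theta A$ is Artinian; in particular $(A/\Theta A)_d = H^0_{\mideal}(M^{(d)})_d$. For a generic l.s.o.p.\ one can arrange, by a prime-avoidance argument worked up the filtration (legitimate since $h^\alg$ does not depend on the generic choice), that each $\theta_i$ lies in no associated prime of $M^{(i-1)}$ other than $\mideal$; then $(0:_{M^{(i-1)}}\theta_i)$ has finite length and is contained in $H^0_{\mideal}(M^{(i-1)})$.

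The engine is a standard exact sequence: if $M$ is a finitely generated graded module over the ambient polynomial ring and $\theta$ is a linear form with $(0:_M\theta)\subseteq H^0_{\mideal}(M)$, then breaking $0\to(0:_M\theta)(-1)\to M(-1)\xrightarrow{\theta}M\to M/\theta M\to 0$ into two short exact sequences and passing to local cohomology (the finite-length module $(0:_M\theta)$ contributes only in cohomological degree $0$) yields, for all $i\ge 0$ and all $j$,
\begin{equation}
0\longrightarrow \big(H^i_{\mideal}(M)/\theta H^i_{\mideal}(M)\big)_j \longrightarrow H^i_{\mideal}(M/\theta M)_j \longrightarrow \big(0:_{H^{i+1}_{\mideal}(M)}\theta\big)_{j-1}\longrightarrow 0. \tag{$\star$}
\end{equation}
The hypothesis on $(0:_M\theta)$ is exactly what is needed to handle the $i=0$ instance, where it correctly identifies the left-hand cokernel.

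Now I would bring in Hochster's computation of the local cohomology of a Stanley--Reisner ring: $H^i_{\mideal}(A)_m=0$ for all $i$ whenever $m>0$, and $H^d_{\mideal}(A)_0\cong\widetilde H^{d-1}(\Delta;\field)$. Using the vanishing as the base case $i=0$ and inducting with $(\star)$ (applied to $M=M^{(i-1)}$, $\theta=\theta_i$, where the two outer terms are respectively a quotient of $H^j_{\mideal}(M^{(i-1)})_m$ and a submodule of $H^{j+1}_{\mideal}(M^{(i-1)})_{m-1}$) shows that $H^j_{\mideal}(M^{(i)})_m=0$ for all $j$ whenever $m>i$. Feeding this back into $(\star)$ along the staircase $k=d,d-1,\dots,1$, with $M=M^{(k-1)}$, $\theta=\theta_k$, and $(i,j)=(d-k,\,k)$: the left-hand term is a quotient of $H^{d-k}_{\mideal}(M^{(k-1)})_k=0$, and since $\theta_k$ carries $H^{d-k+1}_{\mideal}(M^{(k-1)})_{k-1}$ into $H^{d-k+1}_{\mideal}(M^{(k-1)})_k=0$, the right-hand term is all of $H^{d-k+1}_{\mideal}(M^{(k-1)})_{k-1}$. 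Thus $(\star)$ collapses to an isomorphism $H^{d-k}_{\mideal}(M^{(k)})_k\cong H^{d-k+1}_{\mideal}(M^{(k-1)})_{k-1}$, and composing these for $k=d$ down to $k=1$ gives
\[
(A/\Theta A)_d = H^0_{\mideal}(M^{(d)})_d \cong H^1_{\mideal}(M^{(d-1)})_{d-1}\cong\cdots\cong H^d_{\mideal}(A)_0 \cong \widetilde H^{d-1}(\Delta;\field),
\]
whose $\field$-dimension is $\widetilde\beta_{d-1}(\Delta)$.

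The point requiring care is the genericity bookkeeping that makes $(\star)$ available at every stage of the filtration, together with checking the $i=0$ case of $(\star)$; after that the argument is a formal chase through the local cohomology long exact sequence, with all of the topological input concentrated in the two cited consequences of Hochster's formula. I expect this bookkeeping to be the only real obstacle, and a mild one: a crude iteration of $(\star)$ already produces the inequality $h^\alg_d(\Delta)\le\sum_{l}\binom{d}{l}\dim_\field H^l_{\mideal}(A)_{d-l}=\widetilde\beta_{d-1}(\Delta)$, and the staircase above refines this to an equality.
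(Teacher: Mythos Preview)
The paper does not give its own proof of this statement; it is quoted as a known result and attributed to Tay--White--Whitely and to Babson--Novik, with no argument supplied. Your proof is correct. It is, moreover, assembled precisely from the tools the paper develops in Section~\ref{sect:results} for its main theorems: your prime-avoidance hypothesis $(0:_{M^{(i-1)}}\theta_i)\subseteq H^0_{\mideal}(M^{(i-1)})$ is exactly Lemma~\ref{primeavoidance}, and your exact sequence $(\star)$ is the short-exact-sequence unpacking of the long exact sequence of Lemma~\ref{LESLemma}. The staircase of isomorphisms you chase, $H^{d-k}_{\mideal}(M^{(k)})_k\cong H^{d-k+1}_{\mideal}(M^{(k-1)})_{k-1}$, is (in the paper's notation, $\M^0(j-1)_{j-1}\cong\M^{j-1}(\emptyset)_0$) invoked without further comment inside the proof of Theorem~\ref{hd1Thm}. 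So while there is no proof in the paper to compare against, your argument would slot into the paper's framework with essentially no adaptation.
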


In this paper, we focus on a computation of $h_{d-1}^\alg(\Delta)$ and $h_{d-1}^\sig(\Delta)$ for an arbitrary $(d-1)$-dimensional complex $\Delta$. However, perhaps the more valuable contribution of our results is the method of their proof, which appears to provide an avenue for the full computation of algebraic $h$-vectors of arbitrary complexes. These results are optimistically planned for a follow-up paper. For the sake of brevity, we will state here only the equation concerning the reduced algebraic $h$-vector, $h^\sig(\Delta)$, and refer the reader to Theorem \ref{hd1Thm} for the non-reduced version. In this statement, $\lk_\Delta F$ denotes the link of a face $F$ in $\Delta$.

\begin{reptheorem}{sigmamodThm}
	Let $\Delta$ be a $(d-1)$-dimensional simplicial complex. Then
	\[
	h_{d-1}^\sig(\Delta) = h_{d-1}(\Delta) + (-1)^{d-1}\sum_{F\in \Delta}\binom{d-|F|}{d-1}\widetilde{\chi}_{d-2-|F|}(\lk_\Delta F).
	\]
\end{reptheorem}

Note that since $\widetilde{\beta}_i(\lk_\Delta F) = 0$ for all faces $F\not=\emptyset$ and all $i<d-1-|F|$ when $\Delta$ is Buchsbaum and that $\lk_\Delta\emptyset = \Delta$, this expression is consistent with Theorem \ref{BBMD}. Furthermore, due to the $d-1$ term in the binomial coefficient, the only faces that may potentially contribute a non-zero term in the sum are those of size $0$ or $1$. Despite this, the theorem has been written in the more general form summing over all $F\in\Delta$ to provide both a compact expression as well as a goalpost for further equalities proposed in Conjecture \ref{mainconjecture}. 

 As a final note, Theorem \ref{sigmamodThm} should not be too surprising when compared to Theorems \ref{Stanley} and \ref{BBMD}. Indeed, a $(d-1)$-dimensional complex $\Delta$ is Buchsbaum but not Cohen--Macaulay precisely when $\widetilde{\beta}_i(\Delta)\not=0$ for some $i<d-1$ while $\widetilde{\beta}_i(\lk_\Delta F )=0$ for all faces $F\not=\emptyset$ and all $i<d-1-|F|$. In such a case, the difference between $h_i^\sig(\Delta)$ and $h_i(\Delta)$ is written purely in terms of the values of $\widetilde{\beta}_j(\Delta)$ for $j<i$. Hence, in the case when $\Delta$ fails to be Buchsbaum by having $\widetilde{\beta}_i(\lk_\Delta F)\not=0$ for some $F\not=\emptyset$ and $i<d-1-|F|$, it stands to reason that $h_i(\Delta) - h_i^\alg(\Delta)$ may be written purely in terms of the Betti numbers of the links of those faces which contain non-trivial homology in the appropriate dimensions.

The structure of this paper is as follows. In Section \ref{sect:prelim}, we review definitions and the vital connections between combinatorics, topology, and algebra that will provide the tools for our computations. In Section \ref{sect:results}, we prove our main results after introducing some new lemmas rooted in commutative and homological algebra. Section \ref{sect:apps} is devoted to a brief application of our results to suspensions of Buchsbaum complexes. Finally, in Section \ref{sect:conclusion} we will discuss the implications of our results by presenting possible generalizations and alternate viewpoints.

\section{Preliminaries}\label{sect:prelim}

For an excellent overview of many of the definitions and results in this section, we refer the reader to \cite{St-96}.

\subsection{Combinatorics}
Let $V$ be a finite set and let $\field$ be a fixed infinite field. A \textbf{simplicial complex} $\Delta$ with vertex set $V$ is a collection of subsets of $V$ that is closed under inclusion. We call each element $F\in \Delta$ a \textbf{face} of $\Delta$, and in the case that $F$ consists of a single vertex $v\in V$, we often abbreviate $\{v\}$ to $v$. Each face $F\in\Delta$ has a \textbf{dimension} defined by $\dim (F): =|F|-1$. Similarly, the dimension of $\Delta$ is defined by $\dim(\Delta):=\max\{\dim (F): F\in\Delta\}$. If all maximal faces of $\Delta$ under inclusion have the same dimension, then we say that $\Delta$ is \textbf{pure}.

For the remainder of this section, let $\Delta$ be a simplicial complex of dimension $d-1$. If $F$ is a face of $\Delta$, then the \textbf{link} and \textbf{contrastar} of $F$ in $\Delta$ are the two induced simplicial complexes defined by
\[
\lk_\Delta F :=\{G\in \Delta:F\cup G\in\Delta\text{ and }F\cap G = \emptyset\}
\]
and
\[
\cost_\Delta F :=\{G\in\Delta: F\not\subset G\},
\]
respectively.

An important combinatorial invariant associated to $\Delta$ is its $\mathbf{f}$\textbf{-vector}, written in the form $f(\Delta) = (f_{-1}(\Delta), f_0(\Delta), \ldots, f_{d-1}(\Delta))$ with
\[
f_i(\Delta):=|\{F\in\Delta: \dim(F) = i\}|.
\]
Instead of studying the $f$-vector directly, we study what we will refer to as the (combinatorial) \textbf{$h$-vector} of $\Delta$, written as $h(\Delta) = (h_0(\Delta), h_1(\Delta), \ldots, h_d(\Delta))$ with
\[
h_i(\Delta):=\sum_{j=0}^i(-1)^{i-j}\binom{d-j}{i-j}f_{j-1}(\Delta).
\]

\subsection{Topology}

If $\Gamma$ is another simplicial complex with $\Gamma\subset\Delta$, we denote by $H^i(\Delta, \Gamma)$ the $\mathbf{i}$\textbf{-th relative cohomology group} of the pair $(\Delta, \Gamma)$ (over $\field$). In the case that $\Gamma = \emptyset$, we abbreviate $H^i(\Delta, \emptyset)$ to $\widetilde{H}^i(\Delta)$ and refer to this as the $\mathbf{i}$\textbf{-th reduced cohomology group} of $\Delta$ (over $\field$). These cohomology groups are also vector spaces over $\field$, and one of the most important invariants that we focus on is their dimension. The $\mathbf{i}$\textbf{-th reduced Betti number} of $\Delta$ (over $\field$) is defined by
\[
\widetilde{\beta}_i(\Delta) := \dim_\field \widetilde{H}^i(\Delta)
\]
and the $\mathbf{i}$\textbf{-th relative Betti number} of the pair $(\Delta, \Gamma)$ (over $\field$) is defined by
\[
\beta_i(\Delta, \Gamma) := \dim_\field H^i(\Delta, \Gamma).
\]
When $\Gamma=\cost_\Delta F$ for some face $F\in\Delta$, we have natural isomorphisms
\begin{equation}\label{costtolink}
\widetilde{H}^i(\lk_\Delta F)\cong H^{i-|F|}(\Delta, \cost_\Delta F)
\end{equation}
for all $i$ provided by \cite[Lemma]{Grabe}.

We say that $\Delta$ is \textbf{Buchsbaum} if $\Delta$ is pure and $\beta_i(\Delta, \cost_\Delta F)=0$
for all $i<d-1$ and for every face $F\in \Delta$ with $|F|>0$. Similarly, we say that $\Delta$ is \textbf{Cohen-Macaulay} if $\beta_i(\Delta, \cost_\Delta F)=0$ for all $i<d-1$ and for all faces $F\in\Delta$ (including the empty face). Though the primary results in this paper will not specifically involve Buchsbaum or Cohen-Macaulay complexes, many of the computations that we extend and were mentioned in Section \ref{sect:intro} have historically been restricted to these classes of complexes.

In further uses of Betti numbers, we define the (reduced) \textbf{Euler characteristic} of $\Delta$ and the $\mathbf{i}$\textbf{-th truncated Euler characteristic} of $\Delta$ by
\[
\widetilde{\chi}(\Delta) = \sum_{j=-1}^{d-1}(-1)^j\widetilde{\beta}_j(\Delta)\hspace{3pt}\text{ and }\hspace{3pt}
\widetilde{\chi}_i(\Delta) := \sum_{j=-1}^i(-1)^j\widetilde{\beta}_j(\Delta),
\]
respectively, for $i=-1, \ldots, d-1$.

\subsection{Algebra}

Let $A$ be the polynomial ring $\field[x_v: v\in V]$, and let $\mideal$ be the ideal $(x_v: v\in V)$. If $F\subset V$, then denote
\[
x_F:=\prod_{v\in F}x_v.
\]
We define the \textbf{Stanley--Reisner ideal} of $\Delta$ by
\[
I_\Delta:=(x_F: F\not\in \Delta)
\]
and the \textbf{Stanley--Reisner ring} of $\Delta$ by the quotient
\[
\field[\Delta]:=A/I_\Delta.
\]
We can view $A$ both as a $\ZZ$-graded ring by setting $\deg(x_v)=1$ for all $v\in V$ and as a $\ZZ^V$-graded ring by setting $\deg(x_v)=\mathbf{e}_v$, where $\mathbf{e}_v$ is the standard basis element of $\ZZ^V$ corresponding to $v\in V$. When considering some degree $\alpha=(\alpha_v: v\in V)\in\ZZ^V$, we define the \textbf{support} of $\alpha$ by $\supp(\alpha) = \{v:\alpha_v\not=0\}$. Since $I_\Delta$ is homogeneous with respect to either of these gradings, we will consider $\field[\Delta]$ at times as either a $\ZZ$-graded or $\ZZ^V$-graded $A$-module or vector space over $\field$. In general, for a $\ZZ$-graded $A$-module $M$ and $j\in\ZZ$, we denote by $M[j]$ the module obtained from $M$ by shifting degrees by $j$, defined such that $M[j]_i = M_{i+j}$.

Let $V$ be a $\ZZ$-graded vector space over $\field$ such that $V_i$ is finite-dimensional for all $i$. We abbreviate
\[
\D(V_i):=\dim_\field V_i,
\]
and in the case that $V_i = 0$ for $i<0$, we define the \textbf{Hilbert series} of $V$ in the indeterminate $t$ by
\[
\Hilb(V, t):=\sum_{i\ge 0}\D(V_i)t^i.
\]
One of the primary connections between the combinatorics of $\Delta$ and the algebraic properties of $\field[\Delta]$ is the following theorem due to Stanley (see \cite[Section II.2]{St-96}).

\begin{theorem}\label{HilbSeries}
	Let $\Delta$ be a $(d-1)$-dimensional simplicial complex. Then
	\[
	\Hilb(\field[\Delta], t)= \frac{\sum_{i=0}^dh_i(\Delta)t^i}{(1-t)^d}.
	\]
\end{theorem}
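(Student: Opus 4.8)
The plan is to compute $\Hilb(\field[\Delta],t)$ directly from the monomial $\field$-basis of $\field[\Delta]$ and then re-index the resulting rational function into the claimed form; this is the classical argument of Stanley. First I would recall the key structural fact about the Stanley--Reisner ring: since $I_\Delta = (x_F : F \notin \Delta)$ and $\Delta$ is closed under inclusion, a monomial $x^\alpha \in A$ lies in $I_\Delta$ if and only if $x_F \mid x^\alpha$ for some non-face $F$, which happens precisely when $\supp(\alpha)\notin\Delta$. Hence the residues of the monomials $x^\alpha$ with $\supp(\alpha)\in\Delta$ descend to a $\field$-basis of $\field[\Delta]$. Grouping this basis by the value of $\supp(\alpha)$, each face $F\in\Delta$ with $|F|=j$ contributes exactly the monomials $\prod_{v\in F}x_v^{a_v}$ with every $a_v\ge 1$; the number of such monomials of total degree $i$ is the number of compositions of $i$ into $j$ positive parts, namely $\binom{i-1}{j-1}$ (with the convention that $F=\emptyset$ contributes the single monomial $1$ in degree $0$).

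Summing over all degrees, the generating function contributed by a single $j$-element face is $\sum_{i\ge j}\binom{i-1}{j-1}t^i = t^j\sum_{k\ge 0}\binom{k+j-1}{j-1}t^k = t^j/(1-t)^j$, using the standard identity $\sum_{k\ge 0}\binom{k+j-1}{j-1}t^k=(1-t)^{-j}$. Collecting over all faces and writing $f_{j-1}(\Delta)=|\{F\in\Delta:|F|=j\}|$, this gives
\[
\Hilb(\field[\Delta],t)=\sum_{j=0}^d f_{j-1}(\Delta)\,\frac{t^j}{(1-t)^j}=\frac{1}{(1-t)^d}\sum_{j=0}^d f_{j-1}(\Delta)\,t^j(1-t)^{d-j}.
\]

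It then remains only to identify the numerator $P(t):=\sum_{j=0}^d f_{j-1}(\Delta)\,t^j(1-t)^{d-j}$ with $\sum_{i=0}^d h_i(\Delta)t^i$. Each summand has degree $j+(d-j)=d$, so $P(t)$ is a polynomial of degree at most $d$; expanding $(1-t)^{d-j}=\sum_{\ell=0}^{d-j}(-1)^\ell\binom{d-j}{\ell}t^\ell$ and setting $i=j+\ell$, the coefficient of $t^i$ in $P(t)$ is $\sum_{j=0}^{i}(-1)^{i-j}\binom{d-j}{i-j}f_{j-1}(\Delta)$ (the upper limit $j\le i$ being automatic from the vanishing of $\binom{d-j}{i-j}$ when $j>i$), which is exactly $h_i(\Delta)$ by definition of the combinatorial $h$-vector. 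Substituting back yields the stated formula. I do not expect any genuine obstacle here: the proof rests entirely on the description of $I_\Delta$, an elementary count of monomials with prescribed support, and the geometric series identity, with the only care required being the bookkeeping in the two re-indexing steps.
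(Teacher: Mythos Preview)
Your proof is correct and is precisely the classical argument. Note, however, that the paper does not actually supply its own proof of this theorem: it is stated as a known result attributed to Stanley, with a reference to \cite[Section II.2]{St-96}, and the argument you have written is essentially the one found there.
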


One consequence of the above theorem is that $\field[\Delta]$ has Krull dimension $d$. Given any $A$-module $M$ of Krull dimension $d$, we call a sequence $\Theta = (\theta_1, \ldots, \theta_d)$ of linear forms in $A$ a \textbf{linear system of parameters} (or l.s.o.p.) for $M$ if $M/(\Theta) M$ is a finite-dimensional vector space over $\field$. In the case that $\field$ is infinite, any generic choice of $d$ linear forms will satisfy this condition. Given an l.s.o.p.~for $M$, further define the \textbf{Sigma submodule} of $M$ with respect to $\Theta$ by
\[
\Sigma(\Theta; M) := \Theta M + \sum_{i=0}^d (\theta_1, \ldots, \theta_{i-1}, \theta_{i+1}, \ldots, \theta_d)M:_M \theta_i,
\]
where
\[
(\theta_1, \ldots, \theta_{i-1}, \theta_{i+1}, \ldots, \theta_d)M:_M \theta_i = \{m\in M: \theta_i\cdot m \in (\theta_1, \ldots, \theta_{i-1}, \theta_{i+1}, \ldots, \theta_d)M \}.
\]

We are now ready to provide full definitions for the two main invariants of study in this paper. One of them is what we will call the \textbf{algebraic $h$-vector} of $\Delta$, written as $h^\alg(\Delta)= (h_0^\alg(\Delta), \ldots, h_d^\alg(\Delta))$, where
\[
h_i^\alg(\Delta) = \dim_\field (\field[\Delta]/(\Theta))_i=\D\left(\left(\field[\Delta]/(\Theta)\right)_i\right)
\]
for $i=0, 1, \ldots, d$, and the other is the \textbf{reduced algebraic $h$-vector} of $\Delta$, written as $h^\sig(\Delta)= (h_0^\sig(\Delta), \ldots, h_d^\sig(\Delta))$, where
\[
h_i^\sig(\Delta) = \dim_\field (\field[\Delta]/\Sigma(\Theta; M))_i=\D\left(\left(\field[\Delta]/\Sigma(\Theta; M)\right)_i\right).
\]

As usual, our study of algebraic $h$-vectors will involve much consideration of local cohomology modules of Stanley--Reisner rings. For a general introduction to these modules, the reader is referred to \cite{24hours}. In order to clarify and make use of the structure of the local cohomology modules $H_\mideal^i(\field[\Delta])$ from a topological perspective, it will be important to consider them in the $\ZZ^V$-graded setting. Note that regardless of the choice of grading, the overall structures of these modules (and more importantly, the dimensions of their graded pieces) remain unchanged when considering either grading. The following stunning theorem due to Gr\"{a}be in \cite[Theorem 2]{Grabe} will provide the necessary connections between homological properties of $\field[\Delta]$ and the topology of $\Delta$.

\begin{theorem}\label{Grabe}
	Let $\Delta$ be a simplicial complex, and let $\alpha\in\ZZ^V$. If $\alpha\not\in\ZZ_{\le 0}^V$ or $\supp(\alpha)\not\in\Delta$, then $H_\mideal^i(\field[\Delta])_\alpha=0$. Otherwise, writing $\supp(\alpha)=F\in\Delta$,
	\[
	H_\mideal^i(\field[\Delta])_\alpha\cong H^{i-1}(\Delta, \cost_\Delta F)
	\]
	as vector spaces over $\field$. Furthermore, the $A$-module structure of $H_\mideal^i(\field[\Delta])$ can be defined as follows. If $v\not\in F$, then the multiplication map $\cdot x_v$ is the zero map on $H_\mideal^i(\field[\Delta])_\alpha$. If $v\in F$, then
	\[
	\cdot x_v: H_\mideal^i(\field[\Delta])_\alpha\to H_\mideal^i(\field[\Delta])_{\alpha+\mathbf{e}_v}
	\]
	corresponds to the map
	\[
	H^{i-1}(\Delta, \cost_\Delta F))\to H^{i-1}(\Delta, \cost_\Delta (\supp(\alpha+\mathbf{e}_v)))
	\]
	induced by the inclusion of pairs $(\Delta, \cost_\Delta (\supp(\alpha+\mathbf{e}_v)))\to (\Delta, \cost_\Delta F))$.
\end{theorem}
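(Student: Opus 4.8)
The plan is to compute $H_\mideal^i(\field[\Delta])$ from the \v{C}ech complex and to read everything off degree by degree in the $\ZZ^V$-grading. Write $V=\{x_1,\dots,x_n\}$ and let $\check C^\bullet=\check C^\bullet(x_1,\dots,x_n;\field[\Delta])$ be the \v{C}ech complex on the variables, so that $H_\mideal^i(\field[\Delta])=H^i(\check C^\bullet)$; since $\mideal$ and $I_\Delta$ are $\ZZ^V$-homogeneous, $\check C^\bullet$ is a complex of $\ZZ^V$-graded modules, and it suffices to analyze its strand $\check C^\bullet_\alpha$ for each fixed $\alpha\in\ZZ^V$. In cohomological degree $p$ we have $\check C^p_\alpha=\bigoplus_{G\subseteq V,\,|G|=p}(\field[\Delta]_{x_G})_\alpha$, and the first step is the routine check that $(\field[\Delta]_{x_G})_\alpha$ is one-dimensional precisely when $G\in\Delta$, $\{v:\alpha_v<0\}\subseteq G$, and $\{v:\alpha_v>0\}\cup G\in\Delta$, and is zero otherwise (in particular it vanishes whenever $G\notin\Delta$, since then $x_G=0$ in $\field[\Delta]$). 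Let $\mathcal G_\alpha$ be the set of $G$ for which this piece is nonzero; then $\check C^\bullet_\alpha\cong\bigoplus_{G\in\mathcal G_\alpha}\field$ with the \v{C}ech differential, a signed sum of localization maps, and any such map between two nonzero pieces of the strand is an isomorphism of one-dimensional spaces.

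Next I would dispose of the vanishing cases. If $\alpha$ has a strictly positive coordinate $\alpha_v>0$, then neither condition defining $\mathcal G_\alpha$ is affected by toggling whether $v\in G$, so $G\mapsto G\triangle\{v\}$ is an involution of $\mathcal G_\alpha$; together with the observation that every localization map in the strand is an isomorphism, this standard degeneracy produces a contracting homotopy for $\check C^\bullet_\alpha$, whence $H_\mideal^i(\field[\Delta])_\alpha=0$. If instead $\alpha\in\ZZ_{\le 0}^V$ but $\supp(\alpha)\notin\Delta$, then no face of $\Delta$ contains $\supp(\alpha)$, so $\mathcal G_\alpha=\emptyset$ and the strand is again zero. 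This settles the first assertion.

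For the main assertion, let $\alpha\in\ZZ_{\le 0}^V$ with $F:=\supp(\alpha)\in\Delta$. Then $\mathcal G_\alpha=\{G\in\Delta:F\subseteq G\}=\Delta\setminus\cost_\Delta F$, so $\check C^p_\alpha$ has $\field$-basis indexed by the faces $G\supseteq F$ with $|G|=p$, which is, up to a shift of $1$ in the homological degree, a basis of the relative simplicial cochain complex $C^\bullet(\Delta,\cost_\Delta F)$. The plan is to promote this bijection of bases to an isomorphism of complexes $\check C^\bullet_\alpha\cong C^{\bullet-1}(\Delta,\cost_\Delta F)$: the \v{C}ech differential is the localization map $\field[\Delta]_{x_G}\to\field[\Delta]_{x_{G\cup w}}$ taken with the Koszul sign $(-1)^{\#\{u\in G:\,u<w\}}$, and in degree $\alpha$ it carries the degree-$\alpha$ monomial to the degree-$\alpha$ monomial, so it agrees term by term with the relative simplicial coboundary. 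Passing to cohomology gives $H_\mideal^i(\field[\Delta])_\alpha=H^i(\check C^\bullet_\alpha)\cong H^{i-1}(\Delta,\cost_\Delta F)$ (for $F=\emptyset$ this specializes to $\widetilde H^{i-1}(\Delta)$, since $\cost_\Delta\emptyset$ is void). For the $A$-module structure, $\cdot x_v$ on $H_\mideal^i(\field[\Delta])$ is induced by the degree-raising map $\cdot x_v\colon\check C^\bullet_\alpha\to\check C^\bullet_{\alpha+\mathbf{e}_v}$, which acts summand by summand on the localizations. If $v\notin F$, then $\alpha_v=0$ (as $\alpha\le 0$), so $\alpha+\mathbf{e}_v$ has a strictly positive coordinate and $H_\mideal^i(\field[\Delta])_{\alpha+\mathbf{e}_v}=0$ by the vanishing case; hence $\cdot x_v$ is the zero map. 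If $v\in F$, then $v\in G$ for every $G\in\mathcal G_\alpha$, so $x_v$ acts invertibly on each $\field[\Delta]_{x_G}$, and $\cdot x_v$ sends the monomial basis of $\check C^\bullet_\alpha$ bijectively onto that of $\check C^\bullet_{\alpha+\mathbf{e}_v}$, respecting the indexing by $G$. Writing $F':=\supp(\alpha+\mathbf{e}_v)$ --- equal to $F$ if $\alpha_v\le -2$ and to $F\setminus\{v\}$ if $\alpha_v=-1$, so that $F'\subseteq F$ and $\cost_\Delta F'\subseteq\cost_\Delta F$ --- this map becomes, under the identifications above, the natural map of relative cochain complexes $C^{\bullet-1}(\Delta,\cost_\Delta F)\hookrightarrow C^{\bullet-1}(\Delta,\cost_\Delta F')$ induced by the inclusion of pairs $(\Delta,\cost_\Delta F')\hookrightarrow(\Delta,\cost_\Delta F)$; taking cohomology gives the stated description of $\cdot x_v$.

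The crux --- and essentially the only point demanding genuine care --- is the identification of complexes in the main case: one must pin down exactly which pieces $(\field[\Delta]_{x_G})_\alpha$ survive and that each is one-dimensional, and then verify that the \v{C}ech differential coincides with the relative simplicial coboundary on the nose, signs included, rather than merely up to an isomorphism of graded vector spaces. Reconciling the sign conventions of the \v{C}ech complex with a fixed orientation convention for simplicial cohomology is where the bookkeeping lives; once it is done, the vanishing statements (via the toggling homotopy) and the module-structure statement follow formally, and reassembling the strands over all $\alpha\in\ZZ^V$ yields the theorem.
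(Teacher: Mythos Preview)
The paper does not supply its own proof of this theorem: it is stated as a known result and attributed to Gr\"{a}be \cite[Theorem 2]{Grabe}, with no argument given. So there is no ``paper's proof'' to compare against.

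That said, your proposal is essentially the standard proof via the $\ZZ^V$-graded \v{C}ech complex (in the spirit of Hochster's formula), and the outline is correct. Your identification of the nonvanishing summands $(\field[\Delta]_{x_G})_\alpha$ is right, the toggling-by-$v$ contracting homotopy cleanly handles the case $\alpha_v>0$, and the identification of $\check C^\bullet_\alpha$ with the shifted relative cochain complex $C^{\bullet-1}(\Delta,\cost_\Delta F)$ is the heart of the matter. Your treatment of the module structure is also accurate, including the observation that $F'=\supp(\alpha+\mathbf{e}_v)\subseteq F$ forces $\cost_\Delta F'\subseteq\cost_\Delta F$, so that the inclusion of pairs goes the stated way. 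The only point you correctly flag as requiring real care---matching the \v{C}ech signs to the simplicial coboundary signs---is indeed the place where one must fix conventions and check, but there is no obstruction; with a consistent ordering of $V$ the two sign rules agree on the nose. In short, your sketch is a valid proof of the cited theorem.
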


\section{Algebraic $h$-vectors}\label{sect:results}
For the entirety of this section, we will assume that $\Delta$ is an arbitrary $(d-1)$-dimensional complex and that $\Theta = (\theta_1, \ldots, \theta_d)$ is a generic l.s.o.p.~for $\field[\Delta]$. Given a subset $S\subset\Theta$, for the ease of notation we will also use $S$ to denote the submodule $(S)\field[\Delta]$ generated by the ideal $(S)$. In addition, denote
	\[
	\M^i(S) := H_\mideal^i(\field[\Delta]/S).
	\]
If $\theta_j\not\in S$, then also define
	\[
	\K^i(S, \theta_j):=\Ker \left(\cdot \theta_j:\M^i(S)\to \M^i(S)\right)
	\]
and
	\[
	\C\K^i(S, \theta_j):=\Coker \left(\cdot \theta_j:\M^i(S)\to \M^i(S)\right).
	\]
	In the frequent case that $S=(\theta_1, \ldots, \theta_{j-1})$ for some $j$, we make the further abbreviations
	\[
	\M^i(j-1) := \M^i(S) = H_\mideal^i(\field[\Delta]/(\theta_1, \ldots, \theta_{j-1})),
	\]
	\[
	\K^i(j) := \K^i(S, \theta_j) = \Ker \left(\cdot \theta_j: \M^i(j-1)\to \M^i(j-1)\right),
	\]
and
	\[
	\C\K^i(j) := \C\K^i(S, \theta_j) = \Coker \left(\cdot \theta_j:\M^i(j-1)\to \M^i(j-1)\right).
	\]
One particular submodule of $H_\mideal^i(\field[\Delta])$ will play a very important role, which we define by
	\[
	\LL^i_{j}:=\left[\M^i(\emptyset)_{\ge -1}\right]\bigcap_{p=1}^j \left[\Ker \left(\cdot \theta_p:\M^i(\emptyset)_{\ge-1}\to \M^i(\emptyset)_{\ge 0}\right)\right].
	\]
This submodule has a topological interpretation using Gr\"{a}be's theorem. As a $\ZZ$-graded vector space, $\LL^i_j$ is concentrated in degrees $-1$ and $0$. In the top degree, $\LL_j^i$ is isomorphic to $\widetilde{H}^{i-1}(\Delta)$. In degree $-1$, $\LL^i_j$ is isomorphic to the intersection of the kernels of generic linear combinations of maps of the form
\[
H^{i-1}(\Delta, \cost_\Delta v) \to H^{i-1}(\Delta, \emptyset)
\]
induced by inclusions.

\subsection{Calculating $h_{d-1}^\alg(\Delta)-h_{d-1}(\Delta)$}

Before beginning to study $h_{d-1}^\alg(\Delta)$ directly, we introduce four lemmas that will allow for some generalizations of the standard techniques for studying $h^\alg$-vectors of Cohen--Macaulay and Buchsbaum complexes.

\begin{lemma}\label{primeavoidance}
	If $\Delta$ is a $(d-1)$-dimensional simplicial complex and $\Theta = \theta_1, \ldots, \theta_d$ is a generic l.s.o.p.~for $\field[\Delta]$, then
		\[
			\Ker \left(\cdot \theta_j: \field[\Delta]/S\to \field[\Delta]/S\right) = \K^0(S, \theta_j)
		\]
	for any $S\subsetneq\Theta$ and $\theta_j\not\in S$.
\end{lemma}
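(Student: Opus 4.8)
The statement to prove is that for a generic l.s.o.p.\ $\Theta$, any proper subset $S \subsetneq \Theta$, and $\theta_j \notin S$, the kernel of multiplication by $\theta_j$ on $\field[\Delta]/S$ equals $\K^0(S,\theta_j)$, i.e.\ it lives entirely inside the $0$-th local cohomology module $H_\mideal^0(\field[\Delta]/S) = \M^0(S)$, which is the submodule of elements killed by a power of $\mideal$ (the ideal of elements supported away from all of $V$... actually the finite-length part). Let me think about how to prove this.

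The containment $\K^0(S,\theta_j) \subseteq \Ker(\cdot\theta_j : \field[\Delta]/S \to \field[\Delta]/S)$ is immediate since $\M^0(S) \subseteq \field[\Delta]/S$ (the $0$-th local cohomology is a submodule) and $\K^0(S,\theta_j)$ is by definition the kernel of $\cdot\theta_j$ restricted to $\M^0(S)$. So the content is the reverse containment: if $\bar{u} \in \field[\Delta]/S$ satisfies $\theta_j \bar{u} = 0$, then $\bar{u} \in H_\mideal^0(\field[\Delta]/S)$, equivalently $\mideal^N \bar{u} = 0$ for some $N$, equivalently $\bar{u}$ has finite-length support, equivalently $\bar u$ is supported only at the minimal primes... no wait, $H^0_\mideal(M)$ is the largest submodule of finite length, which is the set of elements annihilated by some power of $\mideal$.

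Here is the plan. First, recall that $\field[\Delta]/S$ is a quotient of the Stanley--Reisner ring by a partial system of parameters; its associated primes are among the minimal primes of $\field[\Delta]$ (which are the face ideals $P_F = (x_v : v \notin F)$ for facets $F$) together with possibly the maximal ideal $\mideal$ — indeed embedded primes can appear once one starts quotienting by a non-regular sequence. The key point is to show that $\theta_j$ avoids all the \emph{non-maximal} associated primes of $\field[\Delta]/S$. Since $\Theta$ is generic and $S \cup \{\theta_j\}$ has at most $d$ elements, each $\theta_j$ is a generic linear form, and a generic linear form lies in no minimal prime $P_F$ of $\field[\Delta]$ (as $\dim \field[\Delta]/P_F = d \geq 1$, so $P_F$ does not contain the generic hyperplane); more carefully, one argues inductively that the associated primes of $\field[\Delta]/S$ other than $\mideal$ all have dimension $\geq 1$ — this is where the l.s.o.p.\ property and genericity enter, via the standard fact that a partial l.s.o.p.\ drops the dimension by exactly one at each step (so $\dim \field[\Delta]/S = d - |S| \geq 1$ since $S$ is proper) and a prime-avoidance argument shows a generic linear form misses all associated primes of positive dimension. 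The precise mechanism: $\theta_j \notin \bigcup\{P \in \operatorname{Ass}(\field[\Delta]/S) : P \neq \mideal\}$ because this is a finite union of linear subspaces each of positive dimension, and a generic linear form avoids any fixed finite union of proper... hmm, one has to be a little careful since the associated primes themselves depend on $\Theta$; the cleanest route is to first quotient by $S$ using a generic $S$, observe $\operatorname{Ass}(\field[\Delta]/S)$ is then a fixed finite set, and then choose $\theta_j$ generic relative to that — genericity of the whole tuple $\Theta$ subsumes this.

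Given that $\theta_j$ avoids every associated prime of $\field[\Delta]/S$ except possibly $\mideal$: take $\bar u$ with $\theta_j \bar u = 0$. Then $\bar u$ is annihilated by $\theta_j$, so $\operatorname{Ann}(\bar u) \ni \theta_j$; consider the submodule $\field[\Delta]/S \cdot \bar u \cong (\field[\Delta]/S)/\operatorname{Ann}(\bar u)$. Every associated prime of this cyclic submodule contains $\operatorname{Ann}(\bar u) \ni \theta_j$, hence contains $\theta_j$, hence (since it is also an associated prime of $\field[\Delta]/S$) must be $\mideal$. A module all of whose associated primes equal $\mideal$ has finite length, so $\bar u \in H_\mideal^0(\field[\Delta]/S) = \M^0(S)$, and therefore $\bar u \in \K^0(S,\theta_j)$ as desired.

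The main obstacle is making the prime-avoidance/genericity argument fully rigorous: one must carefully justify that a generic l.s.o.p.\ $\Theta$ has the property that each $\theta_j$ lies outside every positive-dimensional associated prime of every relevant quotient $\field[\Delta]/S$ simultaneously (for all the finitely many subsets $S$ that arise), and that $\dim(\field[\Delta]/S) = d - |S|$ so that when $S$ is proper the quotient still has positive dimension, guaranteeing its non-$\mideal$ associated primes are positive-dimensional. This is standard but requires being precise about the order of quantifiers in "generic." Everything after that — the cyclic-submodule / associated-prime bookkeeping — is routine commutative algebra.
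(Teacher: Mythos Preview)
Your proof is correct and is essentially the same argument as the paper's, just packaged differently. The paper forms the quotient $\N(S) = (\field[\Delta]/S)/\M^0(S)$, observes that $H_\mideal^0(\N(S)) = 0$ forces $\depth \N(S) \ge 1$, and then invokes genericity to make $\theta_j$ a non-zero-divisor on $\N(S)$; you instead phrase the same mechanism directly in terms of associated primes, arguing that a generic linear form avoids every associated prime of $\field[\Delta]/S$ except possibly $\mideal$, so anything killed by $\theta_j$ must lie in the $\mideal$-primary part $\M^0(S)$. The two formulations are standard translations of one another (positive depth $\Leftrightarrow$ $\mideal \notin \operatorname{Ass}$, then prime avoidance), and your acknowledged subtlety about the order of quantifiers in ``generic'' is exactly what the paper sweeps under the phrase ``under the genericness of $\Theta$.'' One small simplification: you do not actually need the associated primes to have \emph{positive dimension}, only that they differ from $\mideal$, since any graded prime $P \neq \mideal$ in $A$ satisfies $P \cap A_1 \subsetneq A_1$ and is therefore avoided by a generic linear form.
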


\begin{proof}
	Define
		\[
			\N(S):= \frac{\field[\Delta]/S}{\M^0(S)}.
		\]
	Since $\field[\Delta]/S$ has Krull dimension $d-|S|$ while $\M^0(S)$ is an Artinian submodule, $\N(S)$ has positive Krull dimension for $0\le |S| \le d-1$. Furthermore, since $\mideal\cdot \N(S)\not=\N(S)$ and $H_\mideal^0(\N(S))=0$, the depth of $\N(S)$ is at least $1$, and so we may assume (under the genericness of $\Theta$) that $\theta_j$ is a non-zero-divisor on $\N(S)$. Hence,
		\[
			\Ker \left(\cdot \theta_j: \field[\Delta]/S\to \field[\Delta]/S\right) = \Ker \left(\cdot \theta_j: \M^0(S)\to \M^0(S)\right).
		\]
\end{proof}

\begin{lemma}\label{HilbertSeries}
	If $\Delta$ is a $(d-1)$-dimensional simplicial complex and $\Theta = \theta_1, \ldots, \theta_d$ is a generic l.s.o.p. for $\field[\Delta]$, then
		\[
			\Hilb(\field[\Delta]/\Theta, t) =\sum_{i=0}^d h_i^\alg(\Delta)t^i = \sum_{i=0}^d h_i(\Delta)t^i + \sum_{j=1}^d(1-t)^{d-j}t\Hilb(\K^0(j), t).
		\]
\end{lemma}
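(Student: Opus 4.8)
The plan is to compute $\Hilb(\field[\Delta]/\Theta, t)$ by peeling off the linear forms $\theta_1, \ldots, \theta_d$ one at a time and tracking the correction term that arises at each step because $\theta_j$ need not be a non-zero-divisor. For each $j = 1, \ldots, d$ set $R_j := \field[\Delta]/(\theta_1, \ldots, \theta_j)$ (with $R_0 := \field[\Delta]$), so that the final quotient is $R_d = \field[\Delta]/\Theta$, which is finite-dimensional by the l.s.o.p.\ hypothesis. The short exact sequence of $\ZZ$-graded modules
\[
0 \longrightarrow \Ker\bigl(\cdot\theta_j\colon R_{j-1}\to R_{j-1}\bigr)[-1] \longrightarrow R_{j-1}[-1] \xrightarrow{\ \cdot\theta_j\ } R_{j-1} \longrightarrow R_j \longrightarrow 0
\]
gives, upon taking Hilbert series (which is additive on exact sequences),
\[
\Hilb(R_j, t) = (1-t)\Hilb(R_{j-1}, t) + t\,\Hilb\bigl(\Ker(\cdot\theta_j\colon R_{j-1}\to R_{j-1}), t\bigr).
\]
Now Lemma \ref{primeavoidance}, applied with $S = (\theta_1, \ldots, \theta_{j-1})$, identifies that kernel with $\K^0(j)$, so the recursion reads $\Hilb(R_j, t) = (1-t)\Hilb(R_{j-1}, t) + t\,\Hilb(\K^0(j), t)$.

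From here I would simply unwind the recursion. Iterating from $j = d$ down to $j = 0$ yields
\[
\Hilb(R_d, t) = (1-t)^d\Hilb(\field[\Delta], t) + \sum_{j=1}^d (1-t)^{d-j}\, t\,\Hilb(\K^0(j), t),
\]
and then Theorem \ref{HilbSeries} replaces $(1-t)^d\Hilb(\field[\Delta], t)$ by $\sum_{i=0}^d h_i(\Delta)t^i$, giving exactly the claimed identity once we also observe that $\Hilb(\field[\Delta]/\Theta, t) = \sum_{i=0}^d h_i^\alg(\Delta)t^i$ by definition of the algebraic $h$-vector. One small point worth checking along the way is that $R_{j-1}$ is nonnegatively graded so that its Hilbert series is well-defined as a power series, and that each $\K^0(j)$ is nonnegatively graded as well; both follow because $\field[\Delta]$ is nonnegatively graded and quotienting by degree-one forms preserves this.

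The only genuine subtlety is the applicability of Lemma \ref{primeavoidance} at \emph{every} step $j = 1, \ldots, d$: that lemma requires $S \subsetneq \Theta$, i.e.\ it applies for $|S| = j-1 \le d-1$, which is exactly the range we need, so there is no gap. I would also be slightly careful that the genericity assumptions invoked in Lemma \ref{primeavoidance} for each of the finitely many steps can be arranged simultaneously — since $\Theta$ is a single fixed generic l.s.o.p.\ and each condition is the non-vanishing of a polynomial in the coefficients, their conjunction still holds generically, so this is routine. Apart from that, the argument is a standard Hilbert-series bookkeeping computation; the main conceptual content has already been supplied by Lemma \ref{primeavoidance}, which lets us replace the full kernel of $\cdot\theta_j$ with the local-cohomology kernel $\K^0(j)$, and the present lemma is essentially the combinatorial consequence of feeding that fact into the telescoping short exact sequences.
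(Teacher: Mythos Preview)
Your proof is correct and follows essentially the same approach as the paper: the paper simply cites ``the standard Hilbert series computations'' to obtain the identity with the full kernels $\Z(j) = \Ker(\cdot\theta_j\colon R_{j-1}\to R_{j-1})$ in place of $\K^0(j)$, and then invokes Lemma~\ref{primeavoidance} to make the substitution. You have unpacked the telescoping short exact sequence argument explicitly and added some careful remarks on genericity and grading, but the underlying strategy is identical.
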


\begin{proof}
	By the standard Hilbert series computations, if
		\[
			\Z(j) = \Ker(\cdot\theta_j: \field[\Delta]/	(\theta_1, \ldots, \theta_{j-1})\to \field[\Delta](\theta_1, \ldots, \theta_{j-1})),
		\]
	then
		\[
			\Hilb(\field[\Delta]/\Theta, t) = \sum_{i=0}^d h_i(\Delta)t^i + \sum_{j=1}^d(1-t)^{d-j}t\Hilb(\Z(j), t).
		\]
	The result now follows by Lemma \ref{primeavoidance}.
\end{proof}

In the case of a Buchsbaum complex, the trivial structure of the local cohomology modules of $\field[\Delta]$ and special properties of $\Theta$ allow for very straightforward computations of the $\K^0(j)$ submodules above using short exact sequences. In the next two lemmas, we use the same prime avoidance technique from Lemma \ref{primeavoidance} to show that a similar approach is still viable in a more general setting.

\begin{lemma}\label{LESLemma}
	If $\Delta$ is a $(d-1)$-dimensional simplicial complex and $\theta_1, \ldots, \theta_d$ is a generic l.s.o.p. for $\field[\Delta]$, then for any $S\subsetneq\Theta$ and $\theta_j\not\in S$ there exists a long exact sequence of graded $A$-modules of the form
		\[
		\begin{tikzpicture}[scale = 1, descr/.style={fill=white,inner sep=1.5pt}]
		\matrix (m) [
		matrix of math nodes,
		row sep=1.5em,
		column sep=1em,
		text height=1ex, text depth=0.25ex
		]
		{ 0 & \frac{\M^0(S)}{\theta_j\cdot (\M^0(S)[-1])} & \M^0(S\cup\{\theta_j\}) \\
			\M^1(S)[-1] & \M^1(S) & \M^1(S\cup\{\theta_j\}) \\
			\mbox{}         &                 & \mbox{}         \\
			\M^{d-|S|}(S)[-1] & \M^{d-|S|}(S) & \M^{d-|S|}(S\cup\{\theta_j\}) & 0.\\
		};
		
		\path[overlay,->, font=\scriptsize,>=latex]
		(m-1-1) edge  node [descr, yshift=1.1ex] {} (m-1-2)
		(m-1-2) edge node [descr, yshift=1.1ex] {}(m-1-3)
		(m-1-3) edge[out=355,in=175] node[descr,yshift=0.3ex] {$\delta$} (m-2-1)
		(m-2-1) edge node [descr, yshift=1.1ex] {$\cdot\theta_j$} (m-2-2)
		(m-2-2) edge node [descr, yshift=1.1ex] {} (m-2-3)
		(m-2-3) edge[out=355,in=175,dashed] (m-4-1)
		(m-4-1) edge node [descr, yshift=1.15ex] {$\cdot\theta_j$} (m-4-2)
		(m-4-2) edge node [descr, yshift=1.1ex] {}(m-4-3)
		(m-4-3) edge node [descr, yshift=1.1ex] {}(m-4-4);
		\end{tikzpicture}
		\]
	
\end{lemma}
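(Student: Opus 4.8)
The plan is to obtain the long exact sequence from the short exact sequence of $\field[\Delta]$-modules
\[
0 \to \left(\field[\Delta]/S\right)[-1] \xrightarrow{\cdot\theta_j} \field[\Delta]/S \to \field[\Delta]/(S\cup\{\theta_j\}) \to 0,
\]
where the first map is injective precisely because multiplication by $\theta_j$ is injective on $\field[\Delta]/S$; but this last fact is \emph{not} true in general, so the first thing I would do is reduce to the setting where it holds. Concretely, following the proof of Lemma \ref{primeavoidance}, consider the submodule $\M^0(S) = H^0_\mideal(\field[\Delta]/S)$ and the quotient $\N(S) = (\field[\Delta]/S)/\M^0(S)$. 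Since $\N(S)$ has depth at least $1$ and $\Theta$ is generic, $\theta_j$ is a non-zero-divisor on $\N(S)$, so we \emph{do} have a short exact sequence
\[
0 \to \N(S)[-1] \xrightarrow{\cdot\theta_j} \N(S) \to \N(S)/\theta_j\N(S) \to 0.
\]

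Next I would apply the long exact sequence in local cohomology $H^\bullet_\mideal(-)$ to this short exact sequence. Because $\M^0(S)$ is Artinian, $H^i_\mideal(\field[\Delta]/S) = H^i_\mideal(\N(S))$ for all $i\ge 1$, and $H^0_\mideal(\N(S)) = 0$; so the higher terms of the long exact sequence for $\N(S)$ agree with $\M^i(S)$, $\M^i(S)[-1]$ for $i\ge 1$. Meanwhile $\field[\Delta]/(S\cup\{\theta_j\})$ differs from $\N(S)/\theta_j\N(S)$ only by a finite-length module supported in $H^0_\mideal$, so $H^i_\mideal$ of the two agree for $i\ge 1$ as well; this identifies the $\M^{i}(S\cup\{\theta_j\})$ terms for $i\ge 1$. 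The only care needed is the low-degree end: I would splice together the beginning of the long exact sequence
\[
0 \to H^0_\mideal(\N(S)/\theta_j\N(S)) \to H^1_\mideal(\N(S))[-1] \xrightarrow{\cdot\theta_j} H^1_\mideal(\N(S)) \to \cdots
\]
with the observation that $H^0_\mideal(\N(S)/\theta_j\N(S))$ fits into an exact sequence with $\M^0(S\cup\{\theta_j\})$ and $\M^0(S)/\theta_j(\M^0(S)[-1])$. This last identification is exactly the content of the claimed first row of the diagram, and it comes from chasing $H^0_\mideal$ through the diagram relating $\field[\Delta]/S$, $\N(S)$, $\M^0(S)$ and their images under $\cdot\theta_j$ — a snake-lemma argument on
\[
\begin{CD}
0 @>>> \M^0(S)[-1] @>>> (\field[\Delta]/S)[-1] @>>> \N(S)[-1] @>>> 0\\
@. @VV{\cdot\theta_j}V @VV{\cdot\theta_j}V @VV{\cdot\theta_j}V @.\\
0 @>>> \M^0(S) @>>> \field[\Delta]/S @>>> \N(S) @>>> 0
\end{CD}
\]
followed by applying $H^0_\mideal$. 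Finally, the sequence terminates at $\M^{d-|S|}(S\cup\{\theta_j\}) \to 0$ because $\field[\Delta]/(S\cup\{\theta_j\})$ has Krull dimension $d-|S|-1$ and hence vanishing local cohomology above that degree, while $\M^{d-|S|+1}(S) = 0$ for the same reason applied to $\field[\Delta]/S$; this gives surjectivity of the last horizontal map and lets me cap the sequence.

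The main obstacle is the bookkeeping at the $H^0$ level: unlike the Buchsbaum case, $\theta_j$ genuinely fails to be a non-zero-divisor on $\field[\Delta]/S$, so the naive short exact sequence does not exist, and one must pass to $\N(S)$, track the discrepancies between $\M^0$-terms of $\field[\Delta]/S$, $\N(S)$, and their mod-$\theta_j$ reductions, and verify that the composite cokernel term is precisely $\M^0(S)/(\theta_j\cdot\M^0(S)[-1])$ rather than something larger. The higher-degree part of the argument is essentially formal once the $H^i_\mideal(\N(S)) \cong \M^i(S)$ identifications (for $i\ge 1$) are in place, since those follow immediately from $\M^0(S)$ being Artinian and the long exact sequence of local cohomology applied to $0 \to \M^0(S) \to \field[\Delta]/S \to \N(S) \to 0$.
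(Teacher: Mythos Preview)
Your approach is correct and shares the paper's key idea: pass to $\N(S)=(\field[\Delta]/S)/\M^0(S)$, use genericity to make $\theta_j$ a non-zero-divisor there, and then identify $H^i_\mideal(\N(S))\cong\M^i(S)$ for $i\ge 1$ via the Artinian property of $\M^0(S)$. The one difference is in the choice of short exact sequence. You take the obvious one,
\[
0\to \N(S)[-1]\xrightarrow{\cdot\theta_j}\N(S)\to \N(S)/\theta_j\N(S)\to 0,
\]
and must then separately relate $\N(S)/\theta_j\N(S)$ to $\field[\Delta]/(S\cup\{\theta_j\})$ and splice the resulting $H^0$ information back in (your snake-lemma step). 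The paper instead uses
\[
0\to \N(S)[-1]\xrightarrow{\cdot\theta_j}\frac{\field[\Delta]/S}{\theta_j\cdot(\M^0(S)[-1])}\to \field[\Delta]/(S\cup\{\theta_j\})\to 0,
\]
so that the third term is already $\field[\Delta]/(S\cup\{\theta_j\})$ and no splicing is needed; the $H^0$ identification $\M^0(S)/\theta_j\M^0(S)\cong H^0_\mideal$ of the middle term is handled by a second short exact sequence. Both routes require the same amount of work at the $H^0$ level and yield the same long exact sequence; the paper's packaging just avoids having to check that the spliced connecting maps agree with the natural ones.
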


\begin{proof}
	As in the proof of Lemma \ref{primeavoidance}, denote
		\[
			\N(S):=\frac{\field[\Delta]/S}{\M^0(S)}.
		\]
	Once more, the Krull dimension of $\N(S)$ is positive since $S\subsetneq \Theta$. Furthermore, the depth of $\N(S)$ is positive because $H_\mideal^0(\N(S))=0$ and $\mideal\cdot \N(S)\not= \N(S)$, and so we may assume by the genericity of $\Theta$ that $\theta_j$ is a non-zero-divisor on $\N(S)$. Hence, there is a short exact sequence of the form
		\[
			0\to \N(S)[-1] \xrightarrow{\cdot\theta_j} \frac{\field[\Delta]/S}{\theta_j\cdot \left(\M^0(S)[-1]\right)} \to \field[\Delta]/(S\cup\{\theta_j\})\to 0,
		\]
	inducing a long exact sequence in local cohomology of the form
		\begin{equation}\label{bigLES}
		\begin{tikzpicture}[scale = 1, descr/.style={fill=white,inner sep=1.5pt}]
		\matrix (m) [
		matrix of math nodes,
		row sep=1.5em,
		column sep=1em,
		text height=1ex, text depth=0.25ex
		]
		{ H_\mideal^0(\N(S))[-1] & H_\mideal^0\left(\frac{\field[\Delta]/S}{\theta_j\cdot \left(\M^0(S)[-1]\right)}\right) & \M^0(S\cup\{\theta_j\}) \\
			 H_\mideal^1(\N(S))[-1] & H_\mideal^1\left(\frac{\field[\Delta]/S}{\theta_j\cdot \left(\M^0(S)[-1]\right)}\right) & \M^1(S\cup\{\theta_j\}) \\
			\mbox{}         &                 & \mbox{}         \\
		 H_\mideal^{d-|S|}(\N(S))[-1] & H_\mideal^{d-|S|}\left(\frac{\field[\Delta]/S}{\theta_j\cdot \left(\M^0(S)[-1]\right)}\right) & \M^{d-|S|}(S\cup\{\theta_j\}).\\
		};
		
		\path[overlay,->, font=\scriptsize,>=latex]
		(m-1-1) edge  node [descr, yshift=1.1ex] {$\cdot\theta_j$} (m-1-2)
		(m-1-2) edge node [descr, yshift=1.1ex] {}(m-1-3)
		(m-1-3) edge[out=355,in=175] node[descr,yshift=0.3ex] {$\delta$} (m-2-1)
		(m-2-1) edge node [descr, yshift=1.1ex] {$\cdot\theta_j$} (m-2-2)
		(m-2-2) edge node [descr, yshift=1.1ex] {} (m-2-3)
		(m-2-3) edge[out=355,in=175,dashed] (m-4-1)
		(m-4-1) edge node [descr, yshift=1.15ex] {$\cdot\theta_j$} (m-4-2)
		(m-4-2) edge node [descr, yshift=1.1ex] {}(m-4-3);
		\end{tikzpicture}
		\end{equation}
		Since $\M^0(S)\subset \field[\Delta]/S$ is an Artinian submodule, $H_\mideal^i(\N(S))\cong \M^i(S)$ for $i>0$. Thus, we can replace both $H_\mideal^i(\N(S))$ and $H_\mideal^i\left(\frac{\field[\Delta]/S}{\theta_j(\cdot \M^0(S)[-1])}\right)$ with $\M^i(S)$ for $i>0$ in this sequence. Furthermore, the first term, $H_\mideal^0(\N(S))$, is zero, and thus it only remains to show that the natural map
		\[
		\frac{\M^0(S)}{\theta_j\cdot (\M^0(S)[-1])}\to \M^0(S\cup\{\theta_j\})
		\]
		is an injection. For this step, consider the short exact sequence
		\[
		0 \to \theta_j\cdot (\M^0(S)[-1])\to \field[\Delta]/S\to \frac{\field[\Delta]/S}{\theta_j\cdot (\M^0(S)[-1])}\to 0
		\]
		where the maps involved are the inclusion and projection. Since $H_\mideal^1(\theta_j\cdot (\M^0(S)[-1]))=0$, the induced long exact sequence in local cohomology begins with
		\[
		0\to H_\mideal^0(\theta_j\cdot (\M^0(S)[-1]))\to \M^0(S)\to H_\mideal^0\left(\frac{\field[\Delta]/S}{\theta_j\cdot (\M^0(S)[-1])}\right)\to 0.
		\]
		However, $H_\mideal^0(\theta_j\cdot (\M^0(S)[-1]))=\theta_j\cdot (\M^0(S)[-1])$, and thus we obtain the natural isomorphism
		\[
		\frac{\M^0(S)}{\theta_j\cdot (\M^0(S)[-1])}\cong H_\mideal^0\left(\frac{\field[\Delta]/S}{\theta_j\cdot (\M^0(S)[-1])}\right).
		\]
	By replacing the appropriate term in the long exact sequence \eqref{bigLES}, the statement of the lemma follows.
\end{proof}

Unfortunately, we cannot use trivial module structure to directly split the long exact sequence above into a series of short exact ones as in the Buchsbaum case. However, we can still use the standard implied short exact sequences to allow for further analysis of the kernels that we need to study. These sequences are shown in the following lemma, which follows immediately from Lemma \ref{LESLemma} and the definitions.

\begin{lemma}\label{diagramlemma}
	Let $S\subset\Theta$ be such that $\theta_{j_1}, \theta_{j_2}\in \Theta\smallsetminus S$ and $\theta_{j_1}\not=\theta_{j_2}$. Then for any $i$, there is a commutative diagram with exact rows of the form
		\[
			\begin{tikzcd}
			0 \ar{r} & \C\K^i(S, \theta_{j_1}) \ar{r}\ar{d}{\cdot\theta_{j_2}} & \M^i(S\cup\{\theta_{j_1}\}) \ar{r}\ar{d}{\cdot\theta_{j_2}} & \K^{i+1}(S, \theta_{j_1})[-1] \ar{r}\ar{d}{\cdot\theta_{j_2}} & 0 \\
						0 \ar{r} & \C\K^i(S, \theta_{j_1})[1] \ar{r} & \M^i(S\cup\{\theta_{j_1}\})[1] \ar{r} & \K^{i+1}(S, \theta_{j_1}) \ar{r} & 0.
			\end{tikzcd}
		\]
\end{lemma}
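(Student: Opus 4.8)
The plan is to extract the claimed short exact sequences directly from the long exact sequence produced by Lemma \ref{LESLemma}, and then to observe that multiplication by $\theta_{j_2}$ furnishes a morphism from each such sequence to its degree-one shift.

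First I would apply Lemma \ref{LESLemma} with the subset $S$ (which is a proper subset of $\Theta$, since $\theta_{j_2}\notin S$) and the parameter $\theta_{j_1}\notin S$, obtaining the long exact sequence
\[
0 \to \frac{\M^0(S)}{\theta_{j_1}\cdot(\M^0(S)[-1])} \to \M^0(S\cup\{\theta_{j_1}\}) \xrightarrow{\delta} \M^1(S)[-1] \xrightarrow{\cdot\theta_{j_1}} \M^1(S) \to \cdots.
\]
Reading it three terms at a time around the maps $\cdot\theta_{j_1}\colon \M^i(S)[-1]\to \M^i(S)$, exactness gives on the one hand that the image of $\M^i(S)\to\M^i(S\cup\{\theta_{j_1}\})$ is $\M^i(S)/\theta_{j_1}\M^i(S)$, which is $\C\K^i(S,\theta_{j_1})$ by definition (for $i=0$ this is exactly the first term written above, and one should check that the object $\M^0(S)/(\theta_{j_1}\cdot(\M^0(S)[-1]))$ of Lemma \ref{LESLemma} does coincide with $\C\K^0(S,\theta_{j_1})$). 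On the other hand, the kernel of $\delta\colon \M^i(S\cup\{\theta_{j_1}\})\to\M^{i+1}(S)[-1]$ is that same image, while the image of $\delta$ equals $\Ker(\cdot\theta_{j_1}\colon \M^{i+1}(S)[-1]\to\M^{i+1}(S)) = \K^{i+1}(S,\theta_{j_1})[-1]$. Splicing these identifications yields, for every $i$ and with the convention that terms outside the relevant range vanish, the short exact sequence
\[
0 \to \C\K^i(S,\theta_{j_1}) \to \M^i(S\cup\{\theta_{j_1}\}) \to \K^{i+1}(S,\theta_{j_1})[-1] \to 0.
\]
The one point of bookkeeping to keep straight here is the degree shift: the kernel of $\cdot\theta_{j_1}$ on the shifted module $\M^{i+1}(S)[-1]$ is the shift of $\K^{i+1}(S,\theta_{j_1})$, while the cokernel term carries no shift.

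Next I would shift this short exact sequence by $[1]$ to obtain the bottom row of the asserted diagram, namely $0 \to \C\K^i(S,\theta_{j_1})[1] \to \M^i(S\cup\{\theta_{j_1}\})[1] \to \K^{i+1}(S,\theta_{j_1}) \to 0$. Every map occurring in the top row — the connecting map $\delta$, the natural surjection $\M^i(S)\twoheadrightarrow\C\K^i(S,\theta_{j_1})$, and the resulting inclusion into $\M^i(S\cup\{\theta_{j_1}\})$ — is a homomorphism of $A$-modules, hence commutes with multiplication by $\theta_{j_2}\in A$. Since $\cdot\theta_{j_2}$ raises degree by one, it defines for each graded $A$-module $X$ a degree-zero morphism $X\to X[1]$, and $A$-linearity of the horizontal maps makes the three such morphisms on $\C\K^i(S,\theta_{j_1})$, $\M^i(S\cup\{\theta_{j_1}\})$, and $\K^{i+1}(S,\theta_{j_1})[-1]$ assemble into a chain map from the top row to its shift. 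This is precisely the commutative diagram in the statement.

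The argument is essentially formal once Lemma \ref{LESLemma} is available, so I do not expect a serious obstacle. The only point demanding a moment's care is the degree-shift bookkeeping described above, together with confirming that the $i=0$ cokernel term of Lemma \ref{LESLemma} and the higher cokernels $\M^i(S)/\theta_{j_1}\M^i(S)$ produced by exactness all agree with the definition of $\C\K^i(S,\theta_{j_1})$.
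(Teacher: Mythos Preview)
Your proposal is correct and takes essentially the same approach as the paper, which simply states that the lemma follows immediately from Lemma~\ref{LESLemma} and the definitions. You have spelled out exactly those details: extracting the short exact sequence from the long exact sequence by identifying the image and kernel at each stage as $\C\K^i(S,\theta_{j_1})$ and $\K^{i+1}(S,\theta_{j_1})[-1]$, then using $A$-linearity of the maps to get commutativity with multiplication by $\theta_{j_2}$.
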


The next proposition is the first example of reducing statements about $\M^i(j)$ for varying $i$ and $j$ to statements about $\M^{i+j}(\emptyset)$, whose structure is well-understood through Gr\"{a}be's theorem.

\begin{proposition}\label{submodProp}
	Let $\Delta$ be a $(d-1)$-dimensional simplicial complex and let $\theta_1, \ldots, \theta_d$ be a generic l.s.o.p.~ for $\field[\Delta]$. Then for all $i$ and $j$, there exists a submodule $\Bb^i(j)$ of $\K^i(j)$ that satisfies the following three conditions:
	\begin{enumerate}[label=(\roman*)]
		\item There is a natural isomorphism
			\[
				\left(\frac{\K^i(j)}{\Bb^i(j)}\right)_{\ge j-2}\cong \LL_{j}^{i+j-1}[1-j].
			\]
		\item $\Bb^i(j)$ is concentrated in degree $j-2$. In particular, $\Bb^i(j)$ has trivial module structure.
		\item The dimension of $\Bb^i(j)$ as a vector space over $\field$ in degree $j-2$ is
			\[
				\D\left(\Bb^i(j)_{j-2}\right)=\sum_{p=1}^{j-1}\D \left[\Coker\cdot\theta_p:\LL_{p-1}^{i+j-2}\to \M^{i+j-2}(\emptyset)_0\right].
			\]
	\end{enumerate}
\end{proposition}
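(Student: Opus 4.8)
The plan is to prove Proposition \ref{submodProp} by induction on $j$, carrying along an auxiliary ``$\M$-version'' $(\star_j)$: \emph{for every $i$, the module $\M^i(j-1)$ is concentrated in degrees $\le j-1$, and it has a submodule $\NE^i(j-1)$, concentrated in degree $j-2$ and annihilated by every $\theta_\ell$, such that there is a natural isomorphism of $A$-modules $\bigl(\M^i(j-1)/\NE^i(j-1)\bigr)_{\ge j-2}\cong\LL^{i+j-1}_{j-1}[1-j]$, and $\D\bigl(\NE^i(j-1)_{j-2}\bigr)=\sum_{p=1}^{j-1}\D\bigl[\Coker\bigl(\cdot\theta_p\colon\LL^{i+j-2}_{p-1}\to\M^{i+j-2}(\emptyset)_0\bigr)\bigr]$}. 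Thus $(\star_j)$ is just the present proposition with $\K^i(j)$ replaced by $\M^i(j-1)$ and $\LL^{i+j-1}_{j}$ by $\LL^{i+j-1}_{j-1}$; I would prove the conjunction of $(\star_j)$ and the proposition by induction on $j$, showing within the inductive step that $(\star_j)$ implies the proposition and that $(\star_{j-1})$ together with the case $j-1$ of the proposition implies $(\star_j)$.

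Granting $(\star_j)$, the proposition follows quickly. Since $\M^i(j-1)$ vanishes above degree $j-1$, one has $\K^i(j)_{\ge j-2}=\Ker\bigl(\cdot\theta_j\colon\M^i(j-1)_{\ge j-2}\to\M^i(j-1)_{\ge j-2}\bigr)$. Applying the snake lemma to $0\to\NE^i(j-1)\to\M^i(j-1)_{\ge j-2}\to\LL^{i+j-1}_{j-1}[1-j]\to 0$ with all vertical maps $\cdot\theta_j$: the map annihilates $\NE^i(j-1)$, and the connecting map lands in $\NE^i(j-1)[1]$, concentrated in degree $j-3$, whereas $\Ker(\cdot\theta_j|_{\LL^{i+j-1}_{j-1}[1-j]})$ is concentrated in degrees $j-2,j-1$, so it vanishes; this gives $0\to\NE^i(j-1)\to\K^i(j)_{\ge j-2}\to\Ker(\cdot\theta_j|_{\LL^{i+j-1}_{j-1}[1-j]})\to 0$. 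Since $\Ker(\cdot\theta_j|_{\LL^{i+j-1}_{j-1}})=\LL^{i+j-1}_j$ right from the definition of the $\LL$-submodules (the degree-$0$ part is automatically killed by $\theta_j$, and on the degree-$(-1)$ part the kernel of $\cdot\theta_j$ is exactly the extra intersectand defining $\LL^{i+j-1}_j$), the choice $\Bb^i(j):=\NE^i(j-1)$ yields (i), (ii), (iii). The base case $(\star_1)$ is immediate: $\M^i(0)=\M^i(\emptyset)=H^i_\mideal(\field[\Delta])$ lies in degrees $\le 0$ by Gr\"abe's Theorem \ref{Grabe}, $\NE^i(0)=0$, and $\LL^i_0[0]=\M^i(\emptyset)_{\ge-1}$; this recovers the case $j=1$ of the proposition with $\Bb^i(1)=0$.

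For the inductive step I would specialize Lemma \ref{LESLemma} (the top row of the diagram in Lemma \ref{diagramlemma}) to $S=(\theta_1,\dots,\theta_{j-2})$ and $\theta_{j-1}$, obtaining $0\to\C\K^i(j-1)\to\M^i(j-1)\to\K^{i+1}(j-1)[-1]\to 0$, and restrict it to degrees $\ge j-2$. By $(\star_{j-1})$ the module $\M^i(j-2)$ vanishes above degree $j-2$, hence so does $\C\K^i(j-1)=\Coker(\cdot\theta_{j-1}\colon\M^i(j-2)\to\M^i(j-2))$; and $\K^{i+1}(j-1)$ vanishes above degree $j-2$ by the case $j-1$ of the proposition, so $\M^i(j-1)$ is concentrated in degrees $\le j-1$. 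That case of the proposition also provides a submodule $\Bb^{i+1}(j-1)$ of $\K^{i+1}(j-1)$ in degree $j-3$ with $\bigl(\K^{i+1}(j-1)[-1]\bigr)_{\ge j-2}/\Bb^{i+1}(j-1)[-1]\cong\LL^{i+j-1}_{j-1}[1-j]$. I then take $\NE^i(j-1)\subseteq\M^i(j-1)_{\ge j-2}$ to be the preimage of $\Bb^{i+1}(j-1)[-1]$; the exact sequence $0\to\C\K^i(j-1)_{j-2}\to\NE^i(j-1)\to\Bb^{i+1}(j-1)[-1]\to 0$ shows $\NE^i(j-1)$ is concentrated in degree $j-2$ and $\bigl(\M^i(j-1)/\NE^i(j-1)\bigr)_{\ge j-2}\cong\LL^{i+j-1}_{j-1}[1-j]$. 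Annihilation of $\NE^i(j-1)$ by every $\theta_\ell$ holds for $\ell\le j-1$ (these kill all of $\M^i(j-1)$) and for $\ell\ge j$ because such $\theta_\ell$ carries $\NE^i(j-1)_{j-2}$ into $\C\K^i(j-1)_{j-1}=0$ (using that $\C\K^i(j-1)$ is a submodule of $\M^i(j-1)$ and $\Bb^{i+1}(j-1)$ has trivial module structure).

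What remains, and what I expect to be the main obstacle, is the identification $\C\K^i(j-1)_{j-2}\cong\Coker\bigl(\cdot\theta_{j-1}\colon\LL^{i+j-2}_{j-2}\to\M^{i+j-2}(\emptyset)_0\bigr)$; this is precisely the step that trades a statement about $\M^i(\cdot)$ for one about $\M^{i+\cdot}(\emptyset)$. Here I would invoke $(\star_{j-1})$ for $\M^i(j-2)$: the natural $A$-module isomorphism $\bigl(\M^i(j-2)/\NE^i(j-2)\bigr)_{\ge j-3}\cong\LL^{i+j-2}_{j-2}[2-j]$ identifies $\M^i(j-2)_{j-2}$ with $\M^{i+j-2}(\emptyset)_0$ and $\M^i(j-2)_{j-3}/\NE^i(j-2)_{j-3}$ with $(\LL^{i+j-2}_{j-2})_{-1}$, and — because it is $A$-linear — it intertwines multiplication by $\theta_{j-1}$ on these graded pieces; since $\cdot\theta_{j-1}$ also kills $\NE^i(j-2)_{j-3}$ and hence factors through the quotient, $\C\K^i(j-1)_{j-2}=\M^i(j-2)_{j-2}/\theta_{j-1}\M^i(j-2)_{j-3}$ takes the claimed form. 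Feeding this, together with the case $j-1$ of (iii), which gives $\D(\Bb^{i+1}(j-1)_{j-3})=\sum_{p=1}^{j-2}\D[\Coker(\cdot\theta_p\colon\LL^{i+j-2}_{p-1}\to\M^{i+j-2}(\emptyset)_0)]$, into the exact sequence defining $\NE^i(j-1)$ produces the dimension formula in $(\star_j)$ and completes the induction. The residual difficulties are organizational: keeping the degree shifts straight, and making sure that the objects transported through the snake-lemma arguments are honest natural morphisms of $A$-modules (so that the module structure asserted in (i) is correct), not merely dimension counts.
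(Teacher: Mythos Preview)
Your proposal is correct and follows essentially the same approach as the paper: induction on $j$, using the short exact sequence $0\to\C\K^i(j-1)\to\M^i(j-1)\to\K^{i+1}(j-1)[-1]\to0$ from Lemma~\ref{LESLemma}/\ref{diagramlemma} together with the snake lemma to track how the $\LL$-submodules propagate. The only difference is organizational: you strengthen the inductive hypothesis by carrying the auxiliary statement $(\star_j)$ about $\M^i(j-1)$, which lets you read off $\C\K^i(j-1)_{j-2}\cong\Coker\bigl(\cdot\theta_{j-1}\colon\LL^{i+j-2}_{j-2}\to\M^{i+j-2}(\emptyset)_0\bigr)$ directly, whereas the paper establishes this same identification via a second application of the snake lemma within the inductive step.
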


\begin{proof}
	The proof will proceed by induction on $j$. For the inductive step, we will need to establish both base cases $j=1$ and $j=2$. For $j=1$, taking $\Bb^{i}(1)=\{0\}$ satisfies all three conditions of the proposition, as the sum in part (iii) is empty and $(\K^i(1))_{\ge -1}=\LL_{1}^i$.
	
	For the $j=2$ case, let $\Bb^i(2)=\C\K^i(1)_0$. This choice immediately satisfies condition (ii) since $\C\K^i(1)_0$ is concentrated in a single degree, and the equality
		\[
			\C\K^i(1)_0 = \Coker \cdot \theta_1:\LL_0^i\to \M^i_0
		\]
	demonstrates that it also satisfies condition (iii). For condition (i), the commutative diagram (with exact rows) guaranteed by Lemma \ref{diagramlemma} can be truncated and written in the following form:
		\[
			\begin{tikzcd}
			0 \ar{r} & \C\K^i(1)_0 \ar{r}\ar{d}{\cdot\theta_2} & \M^i(1)_{\ge 0} \ar{r}\ar{d}{\cdot\theta_2} & \LL^{i+1}_{1}[-1] \ar{r}\ar{d}{\cdot\theta_2} & 0 \\
			0 \ar{r} & 0 \ar{r} & \M^i(1)[1]_0 \ar{r} & \M^{i+1}(\emptyset)_0 \ar{r} & 0.
			\end{tikzcd}
		\]
	Applying the snake lemma to this diagram results in the short exact sequence
		\[
			0 \to \C\K^i(1)_0 \to \K^i(2)_{\ge 0} \to \Ker (\cdot \theta_2:\LL^{i+1}_{1}[-1]\to \M^{i+1}(\emptyset)_0)\to 0,
		\]
	so
		\[
			\left(\frac{\K^i(2)}{\Bb^i(2)}\right)_{\ge 0}=\left(\frac{\K^i(2)}{\C\K^i(1)}\right)_{\ge 0}\cong \Ker (\cdot \theta_2:\LL^{i+1}_{1}[-1]\to \M^{i+1}(\emptyset)_0).
		\]
	Finally, note that this rightmost term is nothing more than $\LL_{2}^{i+1}[-1]$, establishing the $j=2$ case.
	
	Now suppose that $j\ge 2$ and that $\K^i(\ell)$ contains a submodule $\Bb^i(\ell)$ satisfying the properties of the proposition for all $i$ and all $\ell\le j$. As in the $j=2$ case, the main tool will be the truncated version of the diagram from Lemma \ref{diagramlemma}, with exact rows:
		\[
			\begin{tikzcd}
			0 \ar{r} & \C\K^i(j)_{j-1} \ar{r}\ar{d}{\cdot\theta_{j+1}} & \M^i(j)_{\ge j-1} \ar{r}\ar{d}{\cdot\theta_{j+1}} & \K^{i+1}(j)[-1]_{\ge j-1} \ar{r}\ar{d}{\cdot\theta_{j+1}} & 0 \\
			0 \ar{r} & 0 \ar{r} & \M^i(j)[1]_{j-1} \ar{r} & \M^{i+1}(j-1)_{j-1} \ar{r} & 0.
			\end{tikzcd}
		\]
	Once more, the snake lemma implies the existence of a natural isomorphism
		\begin{equation}\label{varphieq}
			\varphi:\left(\frac{\K^i(j+1)}{\C\K^i(j)}\right)_{\ge j-1}\xrightarrow{\sim} \Ker \left(\cdot \theta_{j+1}:\K^{i+1}(j)[-1]_{\ge j-1}\to \M^{i+1}(j-1)_{j-1}\right).
		\end{equation}
	By the inductive hypothesis, there exists a submodule $\Bb^{i+1}(j)\subset \K^{i+1}(j)_{\ge j-2}$ satisfying the properties of the proposition. In particular,
		\[
			\left(\frac{\K^{i+1}(j)}{\Bb^{i+1}(j)}[-1]\right)_{\ge j-1}\cong \LL_{j}^{i+j}[1-(j+1)].
		\]
	Now define
		\[
			\Bb^i(j+1):=\C\K^i(j)_{j-1}+\varphi^{-1}(\Bb^{i+1}(j)[-1]).
		\]
	It is immediate that $\Bb^i(j+1)$ satisfies property (ii) of the proposition. Furthermore, taking the quotient of both sides of \eqref{varphieq} produces an isomorphism
		\[
			\left(\frac{\K^i(j+1)}{\Bb^{i}(j+1)}\right)_{\ge (j+1)-2}\cong \Ker (\cdot \theta_{j+1}:\LL_{j}^{i+j}[1-(j+1)]\to \M^{i+j}(\emptyset)[1-(j+1)]).
		\]
	Since this kernel is just $\LL_{j+1}^{i+(j+1)-1}[1-(j+1)]$, property (i) of the proposition has been established. It remains to calculate $\D\left( \Bb^{i}(j+1)_{j-1}\right)$ and establish property (iii).
	
	By assumption, we immediately have
		\begin{equation}\label{Mdim}
			\D\left(\varphi^{-1}(\Bb^{i+1}(j)[-1])_{j-1}\right)=\sum_{p=1}^{j-1}\D \left[\Coker\cdot\theta_p:\LL_{p-1}^{i+(j+1)-2}\to \M^{i+(j+1)-2}(\emptyset)_0\right].
		\end{equation}
	Thus, it remains to calculate the dimension of $\C\K^i(j)_{j-1}$. For this, once again consider the commutative diagram with exact rows
		\[
			\begin{tikzcd}
			0 \ar{r} & \C\K^i(j-1)_{j-2} \ar{r}\ar{d}{\cdot\theta_{j}} & \M^i(j-1)_{\ge j-2} \ar{r}\ar{d}{\cdot\theta_{j}} & \K^{i+1}(j-1)[-1]_{\ge j-2} \ar{r}\ar{d}{\cdot\theta_{j}} & 0 \\
			0 \ar{r} & 0 \ar{r} & \M^i(j-1)[1]_{j-2} \ar{r} & \M^{i+1}(j-2)_{j-2} \ar{r} & 0.
			\end{tikzcd}
		\]
	In this case, the snake lemma provides an immediate isomorphism
		\begin{equation}\label{cokereq}
			\C\K^i(j)_{j-1}\cong (\Coker \cdot\theta_j: \K^{i+1}(j-1)[-1]_{\ge j-2}\to \M^{i+1}(j-2)_{j-2}).
		\end{equation}
	Applying our inductive hypothesis once more provides a submodule $\Bb^{i+1}(j-1)\subset \K^{i+1}(j-1)$ such that
		\[
			\left(\frac{\K^{i+1}(j-1)}{\Bb^{i+1}(j-1)}\right)_{\ge j-3}\cong \LL_{j-1}^{i+j-1}[2-j].
		\]
	Furthermore, since $\Bb^{i+1}(j-1)$ has trivial module structure, the map $\cdot\theta_j$ descends to the quotient and we obtain the next commutative diagram in which the horizontal maps are isomorphisms:
		\[
		\begin{tikzcd}
			\left(\frac{\K^{i+1}(j-1)}{\Bb^{i+1}(j-1)}\right)[-1]_{\ge j-2} \ar{r}\ar{d}{\cdot\theta_j} & \LL_{j-1}^{i+j-1}[-(j-1)] \ar{d}{\cdot\theta_j} \\
			\M^{i+1}(j-2))_{j-2}\ar{r} & \M^{i+j-1}(\emptyset)[-(j-2)]_{j-2}.
		\end{tikzcd}
		\]
	
	Finally, the cokernel we are interested in from equation \eqref{cokereq} is unaffected by taking the quotient. That is, $\C\K^i(j)_{j-1}$ is isomorphic to the cokernel of the left vertical map above, and thus the diagram implies that
		\[
			\C\K^i(j)_{j-1}\cong (\Coker \cdot\theta_j: \LL_{j-1}^{i+(j+1)-2}[-(j-1)] \to \M^{i+(j+1)-2}(\emptyset)[-(j-2)]).
		\]
	This, combined with \eqref{Mdim}, shows that $\Bb^i(j+1)$ also has the appropriate dimension (property (iii)), completing the proof.
\end{proof}

The calculations in the above proposition allow for us to further rephrase properties of successive quotients in our Hilbert series calculations back to statements about submodules of $\M^j(\emptyset)$. At this point, we can revert these dimensions back to the relevant Betti numbers of links of certain faces.

\begin{corollary}\label{kernelDim}
	If $\Delta$ is a $(d-1)$-dimensional simplicial complex, then
	\[
	\D\left(\K^i(j)_{j-2}\right) = (j-1)\widetilde{\beta}_{i+j-3}(\Delta)+\D\left((\LL_{j}^{i+j-1})_{-1}\right)+ \D\left((\LL_{j-1}^{i+j-2})_{-1}\right)-\sum_{v\in V}\beta_{i+j-3}(\Delta, \cost_\Delta v).
	\]
	for all $i$ and $j=1, \ldots, d$.
\end{corollary}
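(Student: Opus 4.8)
The plan is to obtain the dimension of $\K^i(j)_{j-2}$ directly from Proposition \ref{submodProp}, which decomposes $\K^i(j)$ (in degrees $\ge j-2$) via the short exact sequence $0 \to \Bb^i(j) \to \K^i(j)_{\ge j-2} \to (\K^i(j)/\Bb^i(j))_{\ge j-2} \to 0$. Since the quotient is isomorphic to $\LL_j^{i+j-1}[1-j]$, in degree $j-2$ it contributes $\D\left((\LL_j^{i+j-1})_{-1}\right)$, while $\Bb^i(j)$ contributes the sum $\sum_{p=1}^{j-1}\D\left[\Coker\,\cdot\theta_p:\LL_{p-1}^{i+j-2}\to \M^{i+j-2}(\emptyset)_0\right]$ by part (iii). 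So the immediate task reduces to identifying this last sum with $(j-1)\widetilde{\beta}_{i+j-3}(\Delta) + \D\left((\LL_{j-1}^{i+j-2})_{-1}\right) - \sum_{v\in V}\beta_{i+j-3}(\Delta,\cost_\Delta v)$.

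First I would fix the cohomological index $k := i+j-2$ and examine a single term $\D\left[\Coker\,\cdot\theta_p:\LL_{p-1}^{k}\to \M^{k}(\emptyset)_0\right]$ for $1 \le p \le j-1$. Using the topological interpretation of $\LL^k_{p-1}$ recorded right after its definition: as a graded vector space it is concentrated in degrees $-1$ and $0$, with $(\LL^k_{p-1})_0 \cong \widetilde{H}^{k-1}(\Delta)$ and $(\LL^k_{p-1})_{-1}$ equal to the intersection of the kernels of $p-1$ generic linear combinations of the inclusion-induced maps $H^{k-1}(\Delta,\cost_\Delta v) \to \widetilde{H}^{k-1}(\Delta)$. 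The map $\cdot\theta_p$ lands in $\M^k(\emptyset)_0 \cong \widetilde{H}^{k-1}(\Delta)$, and by Gräbe's theorem (Theorem \ref{Grabe}) the multiplication by a generic linear form on the degree $-1$ part corresponds to a generic linear combination $\Phi_p$ of the maps $H^{k-1}(\Delta,\cost_\Delta v)\to\widetilde H^{k-1}(\Delta)$; meanwhile on the degree $0$ part $\cdot\theta_p$ is zero since it would raise the $\ZZ$-degree out of the supported range (or, more precisely, since $\M^k(\emptyset)_1 = 0$). Hence the cokernel of $\cdot\theta_p$ on $\LL^k_{p-1}$ is the quotient of $\M^k(\emptyset)_0 \cong \widetilde{H}^{k-1}(\Delta)$ by the image of $\Phi_p$ restricted to $(\LL^k_{p-1})_{-1}$, so $\D\left[\Coker\,\cdot\theta_p\right] = \widetilde{\beta}_{k-1}(\Delta) - \D\left((\LL^k_p)_{-1}\right) + \D\left((\LL^k_{p-1})_{-1}\right)$, where I use that $(\LL^k_p)_{-1}$ is exactly $(\LL^k_{p-1})_{-1}\cap\Ker\Phi_p$, so its codimension inside $(\LL^k_{p-1})_{-1}$ is the rank of $\Phi_p$ on that subspace, which equals the dimension of the image.

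Next I would sum this over $p=1,\ldots,j-1$. The terms $\D\left((\LL^k_p)_{-1}\right)$ telescope: $\sum_{p=1}^{j-1}\left(-\D((\LL^k_p)_{-1}) + \D((\LL^k_{p-1})_{-1})\right) = \D((\LL^k_0)_{-1}) - \D((\LL^k_{j-1})_{-1})$. By definition $\LL^k_0 = \M^k(\emptyset)_{\ge -1}$ with no kernel conditions imposed, so $(\LL^k_0)_{-1} \cong \M^k(\emptyset)_{-1} \cong \bigoplus_{v\in V} H^{k-1}(\Delta,\cost_\Delta v)$ (using Gräbe's theorem in each single-vertex degree, noting these are the only degree $-1$ pieces with support a vertex), giving $\D((\LL^k_0)_{-1}) = \sum_{v\in V}\beta_{k-1}(\Delta,\cost_\Delta v)$. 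Combined with the $(j-1)$ copies of $\widetilde{\beta}_{k-1}(\Delta)$ from the constant terms, and recalling $k-1 = i+j-3$, this yields exactly $(j-1)\widetilde{\beta}_{i+j-3}(\Delta) + \sum_{v\in V}\beta_{i+j-3}(\Delta,\cost_\Delta v) - \D((\LL^{i+j-2}_{j-1})_{-1})$ for $\D(\Bb^i(j)_{j-2})$; adding the $\D((\LL^{i+j-1}_j)_{-1})$ contribution from the quotient gives the stated formula — but wait, I need the sign on the $\cost$ sum to come out negative, so I should double-check the telescoping direction and the identification $(\LL^k_p)_{-1} = (\LL^k_{p-1})_{-1}\cap\Ker\Phi_p$, which forces $\D((\LL^k_p)_{-1}) \le \D((\LL^k_{p-1})_{-1})$ and hence makes $\D((\LL^k_0)_{-1})$ enter with a $+$ sign; re-examining, the codimension of $(\LL^k_p)_{-1}$ in $(\LL^k_{p-1})_{-1}$ is the rank, and the cokernel dimension is $\widetilde\beta_{k-1}(\Delta)$ minus that rank, so $\D[\Coker\,\cdot\theta_p] = \widetilde\beta_{k-1}(\Delta) - \D((\LL^k_{p-1})_{-1}) + \D((\LL^k_p)_{-1})$, which telescopes to $-\D((\LL^k_0)_{-1}) + \D((\LL^k_{j-1})_{-1})$ and gives the correct negative sign on the vertex sum.

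The main obstacle I anticipate is pinning down precisely what $\cdot\theta_p$ does on the two graded pieces of $\LL^k_{p-1}$ and justifying that, for a \emph{generic} l.s.o.p., $(\LL^k_p)_{-1}$ really is the kernel of the restriction of a generic linear combination to $(\LL^k_{p-1})_{-1}$ (so that the rank equals the expected codimension) — this is a genericity argument about the family of maps $\Phi_1,\dots,\Phi_{j-1}$ being "as independent as possible", and it is where the hypothesis that $\field$ is infinite and $\Theta$ is generic is genuinely used. A secondary, more bookkeeping-heavy point is keeping the degree shifts $[1-j]$, $[-1]$, etc., consistent so that "degree $j-2$ in $\K^i(j)$" matches "degree $-1$ in $\LL$" and "degree $0$ in $\M(\emptyset)$" throughout; once those are aligned the rest is the telescoping sum above.
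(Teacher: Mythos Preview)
Your approach is correct and matches the paper's proof essentially line for line: compute each cokernel dimension via rank--nullity as $\widetilde\beta_{k-1}(\Delta) + \D((\LL^k_p)_{-1}) - \D((\LL^k_{p-1})_{-1})$ (your corrected version), telescope over $p=1,\dots,j-1$, and identify $(\LL^k_0)_{-1} = \M^k(\emptyset)_{-1}$ via Gr\"abe's theorem. Your anticipated ``main obstacle'' is not one: the identity $(\LL^k_p)_{-1} = \Ker\bigl(\cdot\theta_p\big|_{(\LL^k_{p-1})_{-1}}\bigr)$ is immediate from the definition of $\LL^k_p$ (the paper records this as $\LL_j^i = \Ker(\cdot\theta_j:\LL_{j-1}^i\to\M^i(\emptyset)_0)$), and rank--nullity then gives the image dimension with no further genericity needed at this step.
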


\begin{proof}
	Since
		\[
			\LL_{j}^i = \Ker \cdot \theta_j:\LL_{j-1}^i\to \M^i(\emptyset)_0,
		\]
	we can write
		\begin{align*}
			\D \left[\Coker \cdot\theta_p:\LL_{p-1}^{i+j-2}\to \M^{i+j-2}(\emptyset)_0\right] &=
			\D (\M^{i+j-2}(\emptyset)_0)-\D\left[\im \cdot\theta_p:\LL_{p-1}^{i+j-2}\to \M^{i+j-2}(\emptyset)_0\right]\\
			&=\D (\M^{i+j-2}(\emptyset)_0) - \left[\D\left((\LL_{p-1}^{i+j-2})_{-1}\right)-\D\left((\LL_{p}^{i+j-2})_{-1}\right)\right]\\
			&= \widetilde{\beta}_{i+j-3}(\Delta) + \D\left((\LL_{p}^{i+j-2})_{-1}\right) - \D\left((\LL_{p-1}^{i+j-2})_{-1}\right).
		\end{align*}
	Hence,
		\begin{align*}
			\sum_{p=1}^{j-1} \D\left[\Coker\cdot\theta_p:\LL_{p-1}^{i+j-2}\to \M^{i+j-2}(\emptyset)_0\right]&= 
			\sum_{p=1}^{j-1} \left(\widetilde{\beta}_{i+j-3}(\Delta) + \D\left((\LL_{p}^{i+j-2})_{-1}\right) - \D\left((\LL_{p-1}^{i+j-2})_{-1}\right)\right)\\
			&=(j-1)\widetilde{\beta}_{i+j-3}(\Delta)+\D\left((\LL_{j-1}^{i+j-2})_{-1}\right)-\D\left((\LL_{\emptyset}^{i+j-2})_{-1})\right)\\
			&=(j-1)\widetilde{\beta}_{i+j-3}(\Delta)+\D\left((\LL_{j-1}^{i+j-2})_{-1}\right)-\D\left(\M^{i+j-2}(\emptyset)_{-1}\right)\\
			&=(j-1)\widetilde{\beta}_{i+j-3}(\Delta)+\D\left((\LL_{j-1}^{i+j-2})_{-1}\right)-\sum_{v\in V}\beta_{i+j-3}(\Delta, \cost_\Delta v).
		\end{align*}
	
	Combining the above equation with Proposition \ref{submodProp} and the equation
		\[
			\D\left(\K^i(j)_{j-2}\right) = 	\D\left( \Bb^i(j)_{j-2}\right)+	\D\left(\left(\frac{\K^i(j)}{\Bb^i(j)}\right)_{j-2}\right)
		\]
	completes the proof.
\end{proof}

\begin{theorem}\label{hd1Thm}
	If $\Delta$ is a $(d-1)$-dimensional simplicial complex, then
		\[
			h_{d-1}^\alg(\Delta)-h_{d-1}(\Delta)=\D\left((\LL_{d}^{d-1})_{-1}\right)+(-1)^{d-1}\sum_{F\in \Delta}\binom{d-|F|}{d-1}\widetilde{\chi}_{d-3-|F|}(\lk_\Delta F).
		\]
\end{theorem}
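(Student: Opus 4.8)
The plan is to combine Lemma \ref{HilbertSeries} with Corollary \ref{kernelDim}, extracting the coefficient of $t^{d-1}$ on both sides, and then to rewrite the resulting $\LL$-dimension and Betti-number data in the link-theoretic form claimed. First I would apply Lemma \ref{HilbertSeries}, which gives
\[
h_{d-1}^\alg(\Delta)-h_{d-1}(\Delta) = \sum_{j=1}^d \left[(1-t)^{d-j}t\,\Hilb(\K^0(j),t)\right]_{t^{d-1}},
\]
where the bracket denotes the coefficient of $t^{d-1}$. Since $\K^0(j)$ is an Artinian module concentrated in low degrees, and since by Proposition \ref{submodProp}(i) together with the structure of $\LL^i_j$ the module $\K^0(j)$ lives in degrees $\ge j-2$ with only degrees $j-2$ and $j-1$ surviving (degree $j-1$ being a copy of $\widetilde H^{j-2}(\Delta)=0$ for $j\le d-1$ since $\dim\Delta=d-1$), the only degree of $\K^0(j)$ that can contribute to the $t^{d-1}$ coefficient after multiplication by $(1-t)^{d-j}t$ is degree $j-2$. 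So the $j$-th summand contributes $(-1)^{d-1-j+1}\binom{d-j}{d-1-j+1}\D(\K^0(j)_{j-2}) = (-1)^{d-j}\binom{d-j}{d-j-1}\D(\K^0(j)_{j-2})$; I would be careful here to track the genuinely surviving degree-$(d-1)$ term coming from $\K^0(d)$, which is where the $\D((\LL^{d-1}_d)_{-1})$ summand in the theorem originates (via $\widetilde H^{d-1}(\Delta)$ in degree $d-2$ of $\K^0(d)$ and, more precisely, via the degree where Proposition \ref{submodProp}(i) returns $\LL^{d-1}_d[1-d]$).

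Next I would substitute the formula for $\D(\K^i(j)_{j-2})$ from Corollary \ref{kernelDim} with $i=0$:
\[
\D(\K^0(j)_{j-2}) = (j-1)\widetilde\beta_{j-3}(\Delta) + \D((\LL^{j-1}_j)_{-1}) + \D((\LL^{j-2}_{j-1})_{-1}) - \sum_{v\in V}\beta_{j-3}(\Delta,\cost_\Delta v).
\]
Using \eqref{costtolink}, $\beta_{j-3}(\Delta,\cost_\Delta v) = \widetilde\beta_{j-3-1}(\lk_\Delta v) = \widetilde\beta_{j-4}(\lk_\Delta v)$ (taking $|v|=1$). For the $\LL$-terms in degree $-1$, I would use the topological description of $\LL^i_j$ given in the text just after Proposition \ref{submodProp}: in degree $-1$, $\LL^i_j$ is the intersection of the kernels of $j$ generic linear combinations of the maps $H^{i-1}(\Delta,\cost_\Delta v)\to\widetilde H^{i-1}(\Delta)$, so its dimension can be pushed down through the exact sequences (as in the proof of Corollary \ref{kernelDim}) to an alternating combination of $\D(\M^i(\emptyset)_0)=\widetilde\beta_{i-1}(\Delta)$-type terms and the vertex-link Betti numbers $\widetilde\beta_{i-2}(\lk_\Delta v)$. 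The key bookkeeping step is that summing the telescoping contributions of $\D(\K^0(j)_{j-2})$ over $j=1,\dots,d$ with the binomial weights $(-1)^{d-j}\binom{d-j}{d-j-1} = (-1)^{d-j}(d-j)$ reorganizes into the single sum over faces $F$ with $|F|\le 1$: the $|F|=0$ (empty face) contributions assemble the $\widetilde\chi_{d-3}(\Delta)$ part and the $|F|=1$ contributions, coming from the $\sum_v\beta_{j-3}(\Delta,\cost_\Delta v)$ terms and the vertex-link pieces of the $\LL$-dimensions, assemble $\sum_{v}\widetilde\chi_{d-4}(\lk_\Delta v)$ with the correct binomial coefficient $\binom{d-1}{d-1}=1$; since $\binom{d-|F|}{d-1}=0$ for $|F|\ge 2$, extending the sum to all $F\in\Delta$ changes nothing. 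I would verify the sign $(-1)^{d-1}$ and the appearance of the \emph{truncated} Euler characteristics $\widetilde\chi_{d-3-|F|}$ by collecting, for each fixed face, the alternating sum of Betti numbers $\sum_{k\le d-3-|F|}(-1)^k\widetilde\beta_k$ that emerges from the nested $\LL$-kernel exact sequences across all relevant $j$.

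The main obstacle I expect is the precise bookkeeping of which graded degree of each $\K^0(j)$ contributes to the $t^{d-1}$ coefficient and of the resulting binomial weights: one must be sure that only degree $j-2$ matters for $j\le d-1$ (using $\widetilde H^{j-2}(\Delta)=0$ there because $\dim\Delta = d-1$ forces $\widetilde\beta_{j-2}(\Delta)=0$ only when $j-2 > d-1$, so in fact for $j\le d-1$ one needs the \emph{sharper} vanishing that the relevant degree of $\K^0(j)$ beyond $j-2$ is a copy of $\widetilde H^{j-2}(\Delta)$ which need not vanish — hence the genuine subtlety is that the degree-$(j-1)$ part of $\K^0(j)$ does contribute, but after multiplication by $(1-t)^{d-j}t$ its contribution to $t^{d-1}$ is $(-1)^{d-j-1}\binom{d-j}{d-j-2}\widetilde\beta_{j-2}(\Delta)$, which must be folded into the telescoping). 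Managing these two families of terms simultaneously, together with the degree-$(-1)$ parts of the $\LL$-modules, and checking that everything collapses to the clean closed form with truncated Euler characteristics, is where the bulk of the careful (but essentially routine, given Proposition \ref{submodProp} and Corollary \ref{kernelDim}) computation lies. A useful sanity check along the way is to specialize to the Buchsbaum case, where all vertex-link Betti numbers $\widetilde\beta_k(\lk_\Delta v)$ vanish for $k<d-2$ and $\LL^i_j$ in degree $-1$ collapses appropriately, and confirm that the formula reduces to Schenzel's Theorem \ref{Schenzel}.
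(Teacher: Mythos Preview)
Your overall strategy is exactly the paper's: extract the $t^{d-1}$ coefficient from Lemma~\ref{HilbertSeries}, feed in Corollary~\ref{kernelDim} for the degree-$(j-2)$ piece of $\K^0(j)$, and reorganize into truncated Euler characteristics of links. However, several of your bookkeeping claims are wrong and would derail the computation if left uncorrected. First, the assertion that $\widetilde H^{j-2}(\Delta)=0$ for $j\le d-1$ is simply false (take any complex with low-dimensional homology), so the degree-$(j-1)$ part of $\K^0(j)$ genuinely contributes for every $j$; you eventually notice this, but the contribution you write down has the wrong binomial. Concretely, the coefficient of $t^{d-1}$ in $(1-t)^{d-j}t\cdot t^k$ is $(-1)^{d-2-k}\binom{d-j}{d-2-k}$, so degree $k=j-2$ carries weight $(-1)^{d-j}\binom{d-j}{d-j}=(-1)^{d-j}$ and degree $k=j-1$ carries weight $(-1)^{d-j-1}\binom{d-j}{d-j-1}=(-1)^{d-j-1}(d-j)$; you have these interchanged. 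The paper then uses $\K^0(j)_{j-1}=\M^0(j-1)_{j-1}\cong\M^{j-1}(\emptyset)_0\cong\widetilde H^{j-2}(\Delta)$ directly (no vanishing needed), so the degree-$(j-1)$ sum becomes $\sum_j(-1)^{d-j-1}(d-j)\widetilde\beta_{j-2}(\Delta)$, which combines with the $(j-1)\widetilde\beta_{j-3}(\Delta)$ term from Corollary~\ref{kernelDim} to produce the clean $d\cdot\widetilde\beta$ coefficients.

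Second, you are working much harder on the $\LL$-terms than necessary. The theorem statement \emph{retains} $\D((\LL_d^{d-1})_{-1})$ explicitly, so there is no need to unpack it into vertex-link Betti numbers. What actually happens is a straightforward telescoping: writing $a_j=\D((\LL_j^{j-1})_{-1})$, the contribution $\sum_{j=1}^d(-1)^{d-j}(a_j+a_{j-1})$ collapses to $a_d+(-1)^{d-1}a_0=\D((\LL_d^{d-1})_{-1})$ since $a_0$ involves $\M^{-1}(\emptyset)=0$. The remaining vertex terms $-\sum_v\beta_{j-3}(\Delta,\cost_\Delta v)$ from Corollary~\ref{kernelDim}, after the sign-weighted sum over $j$ and the identification \eqref{costtolink}, give precisely $(-1)^{d-1}\sum_v\widetilde\chi_{d-4}(\lk_\Delta v)$, and the $\widetilde\beta_{j-1}(\Delta)$ terms give $(-1)^{d-1}d\,\widetilde\chi_{d-3}(\Delta)$. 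With the binomials fixed and the $\LL$-telescoping recognized, the rest is exactly the short computation in the paper.
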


\begin{proof}
	By Lemma \ref{HilbertSeries}, $h_{d-1}^\alg(\Delta)-h_{d-1}(\Delta)$ is the coefficient on $t^{d-1}$ of the polynomial
		\[
			\sum_{j=1}^d(1-t)^{d-j}t\Hilb(\K^0(j), t),
		\]
	so that
		\[
			h_{d-1}^\alg(\Delta)-h_{d-1}(\Delta)=\sum_{j=1}^d\left[\binom{d-j}{d-j-1}(-1)^{d-j-1}\D\left(\K^0(j)_{j-1}\right)+(-1)^{d-j}\D\left(\K^0(j)_{j-2}\right)\right].
		\]
	We will break this into two sums. First, since 
		\[
			\K^0(j)_{j-1}=\M^0(j-1)_{j-1}\cong \M^{j-1}(\emptyset)_0\cong \tilde{H}^{j-2}(\Delta),
		\]
	we obtain
		\[
			\sum_{j=1}^d\binom{d-j}{d-j-1}(-1)^{d-j-1}\D\left(\K^0(j)_{j-1}\right)=\sum_{j=0}^{d-2}(d-j-1)(-1)^{d-j}\widetilde{\beta}_{j-1}(\Delta).
		\]
	On the other hand, by Corollary \ref{kernelDim},
		\[
			\D\left(\K^0(j)_{j-2}\right) = (j-1)\widetilde{\beta}_{j-3}(\Delta)+\D\left((\LL_{j}^{j-1})_{-1}\right)+ \D\left((\LL_{j-1}^{j-2})_{-1}\right)-\sum_{v\in V}\beta_{j-3}(\Delta, \cost_\Delta v).
		\]

	Thus,
		\begin{align*}
			\sum_{j=1}^d (-1)^{d-j}&\D\left(\K^0(j)_{j-2}\right)\\ &=\sum_{j=1}^d(-1)^{d-j}\left[(j-1)\widetilde{\beta}_{j-3}(\Delta)+\D\left((\LL_{j}^{j-1})_{-1}\right)+ \D\left((\LL_{j-1}^{j-2})_{-1}\right)-\sum_{v\in V}\beta_{j-3}(\Delta, \cost_\Delta v)\right]\\
			&=\D\left((\LL_{d}^{d-1})_{-1}\right)+\sum_{j=0}^{d-2}(-1)^{d-j}\left[(j+1)\widetilde{\beta}_{j-1}(\Delta)-\sum_{v\in V}\beta_{j-1}(\Delta, \cost_\Delta v)\right].
		\end{align*}
	Combining the two parts together shows that
		\[
			h_{d-1}^\alg(\Delta)-h_{d-1}(\Delta)=\D\left((\LL_{d}^{d-1})_{-1}\right)+\sum_{j=0}^{d-2}(-1)^{d-j}\left[d\widetilde{\beta}_{j-1}(\Delta)-\sum_{v\in V}\beta_{j-1}(\Delta, \cost_\Delta v)\right],
		\]
		and re-writing this expression using the truncated Euler characteristic and equation (\ref{costtolink}) results in the statement of the theorem.
\end{proof}

\subsection{Sigma submodules}

We will begin this subsection with two lemmas detailing the content and structure of the sigma submodule. First, given $S\subsetneq\Theta$ and $\theta_j\not\in S$, let $\varphi_j$ and $\pi_j$ be the connecting homomorphism and the map induced by the projection, respectively, in the long exact sequence
	\[
	\cdots \to \M^i(S)\xrightarrow{\cdot\theta_j}\M^i(S)\xrightarrow{\pi_j}\M^i(S\cup\{\theta_j\}))\xrightarrow{\varphi_j}\M^{i+1}(S)\to\cdots
	\]
provided by Lemma \ref{LESLemma}. We also must introduce two final abbreviations
\[
\hat{\Theta}_i := (\theta_1, \ldots, \theta_{i-1}, \theta_{i+1}, \ldots, \theta_d)
\]
and
\[
\hat{\Theta}_{i, j} := (\theta_1, \ldots, \theta_{i-1}, \theta_{i+1}, \ldots, \theta_{j-1}, \theta_{j+1}, \ldots, \theta_d).
\]

\begin{lemma}\label{lem1}
	If $\Theta$ is a generic linear system of parameters for $\field[\Delta]$ and $m\in\Sigma(\Theta; \field[\Delta])/\Theta$, then $m\in \pi_i(\M^0(\hat{\Theta}_i))$ for some $i$.
\end{lemma}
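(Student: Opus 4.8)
The plan is to unwind the definition of $\Sigma(\Theta;\field[\Delta])$ until $m$ is written through the modules $\M^0(\hat\Theta_i)$, and then to use the genericity of $\Theta$ to replace a sum of such contributions by a single one.

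Reducing first to a homogeneous $m$ (of some degree $r$) and choosing a homogeneous lift $\tilde m\in\Sigma(\Theta;\field[\Delta])$, the definition of the Sigma submodule gives $\tilde m\equiv\sum_i m_i\pmod{\Theta\field[\Delta]}$ with each $m_i$ homogeneous of degree $r$ and $m_i\in\hat\Theta_i\field[\Delta]:_{\field[\Delta]}\theta_i$. Fixing $i$, the fact that $\theta_i m_i\in\hat\Theta_i\field[\Delta]$ says that the image $\bar m_i$ of $m_i$ in $\field[\Delta]/\hat\Theta_i$ lies in $\Ker(\cdot\theta_i:\field[\Delta]/\hat\Theta_i\to\field[\Delta]/\hat\Theta_i)$, which by Lemma \ref{primeavoidance} applied with $S=\hat\Theta_i$ and $\theta_j=\theta_i$ equals $\K^0(\hat\Theta_i,\theta_i)\subseteq\M^0(\hat\Theta_i)$. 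Since the projection $\field[\Delta]/\hat\Theta_i\to\field[\Delta]/\Theta$ induces $\pi_i$ on $H^0_\mideal$, this yields $[m_i]=\pi_i(\bar m_i)$ and hence $m=\sum_i\pi_i(\bar m_i)$ inside $\M^0(\Theta)=\field[\Delta]/\Theta$.

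The real content is then to collapse this sum to one summand. The route I would try is to compare the subspaces $\pi_i(\M^0(\hat\Theta_i))\subseteq\field[\Delta]/\Theta$ and to show that, for a generic l.s.o.p., $\sum_i\pi_i(\M^0(\hat\Theta_i))=\pi_{i_0}(\M^0(\hat\Theta_{i_0}))$ for every $i_0$, so that $m$ already sits in a single $\pi_{i_0}(\M^0(\hat\Theta_{i_0}))$. Since $\Ker\pi_i=\theta_i\M^0(\hat\Theta_i)$ from the long exact sequence of Lemma \ref{LESLemma}, chasing the exact sequence $0\to\M^0(\hat\Theta_i)\to\field[\Delta]/\hat\Theta_i\to\N(\hat\Theta_i)\to0$ (where $\N(\hat\Theta_i):=(\field[\Delta]/\hat\Theta_i)/\M^0(\hat\Theta_i)$ is, as in the proof of Lemma \ref{primeavoidance}, Cohen--Macaulay of dimension $1$ with $\theta_i$ a non-zero-divisor on it) identifies $\pi_i(\M^0(\hat\Theta_i))$ with $\Ker\big(\field[\Delta]/\Theta\to\N(\hat\Theta_i)/\theta_i\N(\hat\Theta_i)\big)$. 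The task thus becomes to see that this $0$-dimensional quotient of $\field[\Delta]/\Theta$ is the same regardless of which parameter cuts $\N(\hat\Theta_i)$ down; for this I would bring in the commutative diagrams of Lemma \ref{diagramlemma} and the iterated description from Proposition \ref{submodProp} and Corollary \ref{kernelDim}, which rewrite $\K^0(\hat\Theta_i,\theta_i)$, up to its trivially-structured summand $\Bb$, through the intrinsic objects $\LL^\bullet_\bullet$ and $\M^\bullet(\emptyset)_0\cong\widetilde H^\bullet(\Delta)$, so that the dependence on $i$ enters only through generic transition maps $\cdot\theta_k$; genericity should then be what forces the images to agree.

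I expect this collapse to be the main obstacle. Everything up to $m=\sum_i\pi_i(\bar m_i)$ is a formal unwinding of the definitions together with Lemma \ref{primeavoidance}; the passage to a single index is where the genericity of $\Theta$ must be used in an essential way, and it is delicate precisely because the $\pi_i(\M^0(\hat\Theta_i))$ are a priori distinct subspaces of $\field[\Delta]/\Theta$ that have to be reconciled. One must also take care that a single index $i_0$ can be chosen for all of $m$ rather than only in each degree separately, so the homogeneity reduction in the first step — or an argument that the relevant subspaces stabilize across degrees — needs attention.
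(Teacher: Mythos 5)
Your first two paragraphs are essentially the paper's own proof, written slightly more carefully: the paper takes $m$ to lie (mod $\Theta$) in a single colon ideal $(\hat{\Theta}_i):\theta_i$ --- i.e.\ it argues generator by generator of $\Sigma(\Theta;\field[\Delta])/\Theta$ --- lifts it to $n\in\field[\Delta]/\hat{\Theta}_i$ killed by $\theta_i$, invokes Lemma \ref{primeavoidance} to place $n$ in $\K^0(\hat{\Theta}_i,\theta_i)\subseteq\M^0(\hat{\Theta}_i)$, and concludes $m=\pi_i(n)$. That is exactly your reduction up to $m=\sum_i\pi_i(\bar m_i)$, and it is all that is ever used later: in the proof of Theorem \ref{sigmamodThm} the lemma only feeds the containment $\Sigma(\Theta;\field[\Delta])/\Theta\subseteq\sum_i\pi_i(\M^0(\hat{\Theta}_i))$, with Lemma \ref{lem2} then making the sum direct in degree $d-1$. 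So the phrase ``for some $i$'' in the statement should be read in this generator-by-generator sense, and your homogeneity/uniform-index worry is moot.

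The step you single out as the real content --- collapsing the sum to a single $\pi_{i_0}(\M^0(\hat{\Theta}_{i_0}))$ by showing the images coincide for generic $\Theta$ --- is not only unnecessary but false, so that route would fail. Lemma \ref{lem2} says precisely that for $i\not=j$ the subspaces $\pi_i(\M^0(\hat{\Theta}_i))_{d-1}$ and $\pi_j(\M^0(\hat{\Theta}_j))_{d-1}$ meet only in $0$, and the proof of Theorem \ref{sigmamodThm} then computes $\D\bigl(\Sigma(\Theta;\field[\Delta])/\Theta\bigr)_{d-1}=d\cdot\D\bigl[\Coker\,\cdot\theta_i:\LL_d^{d-1}\to\M^{d-1}(\emptyset)\bigr]$, i.e.\ $d$ times the dimension of a single image. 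Whenever these images are nonzero (the typical non-Cohen--Macaulay situation), the sum $\sum_i\pi_i(\M^0(\hat{\Theta}_i))$ is therefore strictly larger than any single summand, and no genericity argument can force the images to agree; genericity is used for the opposite purpose, namely to make the $d$ contributions independent and of equal dimension.
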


\begin{proof}
	Assume that $m\not=0$. First note that $\M^0(\Theta)=\field[\Delta]/\Theta$, since $\field[\Delta]/\Theta$ is finite-dimensional. By assumption, $m$ satisfies $m\in (\hat{\Theta}_i):\theta_i$ for some $i$, while $m\not\in(\hat{\Theta}_i)$. Hence,
	there exists $n\in \field[\Delta]/\hat{\Theta}_i$ such that $\pi_i(n)=m$. Furthermore,
	\[
	n\in\Ker(\cdot\theta_i:\field[\Delta]/\hat{\Theta}_i\to \field[\Delta]/\hat{\Theta}_i).
	\]
	But by Lemma \ref{primeavoidance}, this means that $n\in \K^0(\hat{\Theta}_i, \theta_i)$. In particular, $n\in \M^0(\hat{\Theta}_i)$, and hence $m\in \pi_i(\M^0(\hat{\Theta}_i))$.
\end{proof}

\begin{lemma}\label{lem2}
	If $\Theta=(\theta_1, \ldots, \theta_d)$ is a generic linear system of parameters for $\field[\Delta]$, then
	\[
	\pi_i(\M^0(\hat{\Theta}_i))_{d-1}\cap 	\pi_j(\M^0(\hat{\Theta}_j))_{d-1}=\{0\}
	\]
	for $i\not=j$.
\end{lemma}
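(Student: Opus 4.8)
The plan is to exploit the degree structure forced by Gr\"{a}be's theorem and the genericity of $\Theta$. The key observation is that $\M^0(\hat{\Theta}_i)$ is an Artinian submodule of $\field[\Delta]/\hat{\Theta}_i$, a ring of Krull dimension $1$, so $\M^0(\hat{\Theta}_i)$ is concentrated in a bounded range of degrees; and by the arguments in the proof of Proposition \ref{submodProp} (and Corollary \ref{kernelDim}), its top graded piece in degree $d-2$ — the relevant one after applying $\pi_i$ and shifting — is controlled by $\M^{d-1}(\emptyset)_0 \cong \widetilde{H}^{d-2}(\Delta)$, while lower degrees bring in contributions from $\cost_\Delta v$. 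First I would pin down exactly which degree of $\M^0(\hat{\Theta}_i)$ maps onto $\pi_i(\M^0(\hat{\Theta}_i))_{d-1}$: since $\hat{\Theta}_i$ has $d-1$ elements, repeatedly applying Lemma \ref{LESLemma} (peeling off $\theta_1, \ldots, \hat{\theta_i}, \ldots, \theta_d$ one at a time) identifies the degree-$(d-1)$ part of $\field[\Delta]/\Theta$ coming through $\pi_i$ with a subquotient of $\M^{d-1}(\emptyset)$ in a fixed internal degree, and Gr\"{a}be's theorem says $\M^{d-1}(\emptyset)$ lives only in degrees $\alpha \in \ZZ_{\le 0}^V$. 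Tracking the shifts shows that the only contribution to degree $d-1$ comes from the degree-$0$ piece $\M^{d-1}(\emptyset)_0 \cong \widetilde{H}^{d-2}(\Delta)$, i.e. from $\alpha = 0$, whose support is the empty face.

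Next I would use the explicit $A$-module structure in Gr\"{a}be's theorem. The image $\pi_i(\M^0(\hat{\Theta}_i))_{d-1}$, pulled back to $\field[\Delta]$, is represented by elements whose relevant local-cohomology class sits in $\M^{d-1}(\emptyset)_0$; the subspace it fills is the kernel of multiplication by generic linear combinations of the $\theta_p$ with $p \ne i$ acting $\M^{d-1}(\emptyset)_0 \to \M^{d-1}(\emptyset)$, i.e. precisely the degree-$(-1)$-to-$0$ maps $\widetilde{H}^{d-2}(\Delta) \to \bigoplus_v H^{d-2}(\Delta,\cost_\Delta v)$ described just before Subsection 3.1. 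Because $\Theta$ is generic, the $d$ kernels $\LL^{d-1}_{\hat{i}}$ (using all $\theta_p$, $p \ne i$) are the kernels of $d$ generic codimension-one restrictions of a single fixed linear map $\psi: \widetilde{H}^{d-2}(\Delta) \to \bigoplus_v H^{d-2}(\Delta,\cost_\Delta v)$; a standard genericity/general-position argument then shows that for $i \ne j$, $\LL^{d-1}_{\hat i} \cap \LL^{d-1}_{\hat j} = \Ker \psi = \LL^{d-1}_d$. Finally, I would argue that elements of $\pi_i(\M^0(\hat{\Theta}_i))_{d-1}$ landing in $\Ker\psi$ actually come from $\M^{d-1}(\emptyset)$-classes that survive to degree $d-1$ of $\field[\Delta]/\Theta$ already before being pushed through $\theta_i$ — more precisely, an element in the intersection for two distinct $i,j$ would have to be killed by multiplication by \emph{every} $\theta_k$ (for $i$ it is killed by the $\theta_k$, $k \ne i$; for $j$ by the $\theta_k$, $k \ne j$; together that is all of them) — hence it lies in $\M^0(\Theta)$ having already been zero, forcing it to be $0$ in $\field[\Delta]/\Theta$.

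The main obstacle I anticipate is the bookkeeping in the first step: carefully tracking the internal degree shifts through the $(d-1)$-fold iteration of Lemma \ref{LESLemma} to confirm that $\pi_i(\M^0(\hat{\Theta}_i))_{d-1}$ really is controlled solely by the $\alpha = 0$ (empty-face) part of $\M^{d-1}(\emptyset)$ and not by any $\cost_\Delta v$ contributions, and that the identification is compatible with the module-structure description needed in the second step. Once that identification is nailed down, the genericity argument reducing the intersection of two generic hyperplane-kernels to the common kernel $\LL^{d-1}_d$ should be routine linear algebra, and the conclusion that such a common element vanishes in $\field[\Delta]/\Theta$ follows from Lemma \ref{lem1} together with the fact that being annihilated by all of $\Theta$ places the class in $H^0_\mideal$ of something already divided out.
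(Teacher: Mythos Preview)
Your plan has a genuine gap in the final step. You claim that an element in the intersection ``would have to be killed by multiplication by every $\theta_k$'' and therefore must vanish, but every element of $\field[\Delta]/\Theta$ is already annihilated by all of $\Theta$; the condition is vacuous. What you seem to want is a \emph{single} lift to $\field[\Delta]$ witnessing both annihilation conditions, but $\pi_i(m)=\pi_j(n)$ only says $m$ and $n$ agree modulo all of $\Theta$: they live in the different quotients $\field[\Delta]/\hat\Theta_i$ and $\field[\Delta]/\hat\Theta_j$, and nothing in your outline lets you compare them.

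There is also a direction error in the middle step. The Gr\"abe multiplication maps run from degree $-1$ to degree $0$, i.e.\ from $\bigoplus_v H^{d-2}(\Delta,\cost_\Delta v)$ to $\widetilde H^{d-2}(\Delta)$, not the reverse; and $\pi_i(\M^0(\hat\Theta_i))_{d-1}$ is naturally the \emph{cokernel} $\C\K^0(\hat\Theta_i,\theta_i)_{d-1}$, not an $\LL$-type kernel. The correct kernel description inside $\M^0(\Theta)$ is $\pi_i(\M^0(\hat\Theta_i))=\Ker\varphi_i$, where $\varphi_i$ is the connecting homomorphism of Lemma~\ref{LESLemma}. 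This is precisely the handle the paper uses: from $\pi_i(m)=\pi_j(n)$ one gets $\varphi_i(\pi_j(n))=0$; the commutativity $\varphi_i\pi_j=\pi_j\varphi_i$ (invoked from \cite[Appendix]{MNY}) then gives $\varphi_i(n)\in\Ker\pi_j=\theta_j\cdot\M^1(\hat\Theta_{i,j})$, and since $\varphi_i$ descends to an isomorphism $\C\K^0(\hat\Theta_j,\theta_j)_{d-1}\cong\C\K^1(\hat\Theta_{i,j},\theta_j)_{d-2}$ this forces $n\equiv 0$ in $\C\K^0(\hat\Theta_j,\theta_j)_{d-1}$, whence $\pi_j(n)=0$. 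That commutativity of $\varphi_i$ with $\pi_j$ is the essential bridge between the two images inside $\M^0(\Theta)$, and it does not appear in your plan.
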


\begin{proof}
	Suppose that $m\in\M^0(\hat{\Theta}_i)$ and $n\in\M^0(\hat{\Theta}_j)$ satisfy $\pi_i(m)=\pi_j(n)$, and consider the diagram
	\[
	\begin{tikzcd}
	\M^0(\hat{\Theta}_j)_{d-1} \ar{dd}{\varphi_i} \ar{rr}{\pi_j} & & \M^0(\Theta)_{d-1} \ar{dd}{\varphi_i} \\
	\\
	\M^1(\hat{\Theta}_{i, j})_{d-2} \ar{rr}{\pi_j} & & \M^1(\hat{\Theta}_i)_{d-2},
	\end{tikzcd}
	\]
	which commutes by \cite[Appendix]{MNY}. Since $(\varphi_i\circ\pi_i)(m)=0$ and $\pi_i(m)=\pi_j(n)$, it must be that $(\varphi_i\circ\pi_j)(n)=0$. Then by commutativity, $(\pi_j\circ\varphi_i)(n)=0$. However, $\varphi_i:\M^0(\hat{\Theta}_j)\to \M^1(\hat{\Theta}_{i, j})[-1]$ is an isomorphism in degree $d-1$, and hence $\varphi_i(n)\not=0$. Since
	\[
	\Ker \left[\pi_j: \M^1(\hat{\Theta}_{i, j})_{d-2} \to \M^1(\hat{\Theta}_i)_{d-2} \right] = \im \left[\cdot \theta_j: \M^1(\hat{\Theta}_{i, j})_{d-3}\to \M^1(\hat{\Theta}_{i, j})_{d-2}\right],
	\]
	this implies that $\varphi_i(n) = \theta_j\cdot q $ for some $q\in \M^1(\hat{\Theta}_{i, j})_{d-3}$. That is, $\theta_j\cdot q$ is mapped to zero under the projection
	\[
	\M^1(\hat{\Theta}_{i, j})\to \C\K^1(\hat{\Theta}_{i, j}, \theta_j).
	\]
	However, $\C\K^1(\hat{\Theta}_{i, j}, \theta_j)_{d-2}$ is naturally isomorphic to $\C\K^0(\hat{\Theta}_{j}, \theta_j)_{d-1}$ via the descent of $\varphi_i$. Hence, $n=0$ in $\C\K^0(\hat{\Theta}_{j}, \theta_j)_{d-1}$ as well. Since $\pi_j$ factors through the projection
	\[
	\M^0(\hat{\Theta}_{j})_{d-1}\to \C\K^0(\hat{\Theta}_{j}, \theta_j)_{d-1},
	\]
	we have that $\pi_j(n)=0$.
	
\end{proof}

With these lemmas in place, we can now move on to proving the main result.

\begin{theorem}\label{sigmamodThm}
	If $\Delta$ is a $(d-1)$-dimensional simplicial complex, then
	\[
	h_{d-1}^\sig(\Delta) = h_{d-1}(\Delta) + (-1)^{d-1}\sum_{F\in \Delta}\binom{d-|F|}{d-1}\widetilde{\chi}_{d-2-|F|}(\lk_\Delta F).
	\]
\end{theorem}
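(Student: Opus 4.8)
The plan is to reduce Theorem \ref{sigmamodThm} to the already–proved Theorem \ref{hd1Thm} by computing precisely how much larger $\Sigma(\Theta;\field[\Delta])$ is than $(\Theta)$ in degree $d-1$. Since $\Sigma(\Theta;\field[\Delta])\supseteq(\Theta)$, the module $\field[\Delta]/\Sigma(\Theta;\field[\Delta])$ is a graded quotient of $\field[\Delta]/(\Theta)$, so
\[
h_{d-1}^\sig(\Delta)=h_{d-1}^\alg(\Delta)-\dim_\field\!\left(\Sigma(\Theta;\field[\Delta])/(\Theta)\right)_{d-1}.
\]
Comparing the desired formula with Theorem \ref{hd1Thm} and using that $\widetilde{\chi}_{d-2-|F|}(\lk_\Delta F)-\widetilde{\chi}_{d-3-|F|}(\lk_\Delta F)=(-1)^{d-2-|F|}\widetilde{\beta}_{d-2-|F|}(\lk_\Delta F)$ (together with the fact that $\binom{d-|F|}{d-1}$ is nonzero only for $|F|\le 1$), the theorem becomes equivalent to the single identity
\[
\dim_\field\!\left(\Sigma(\Theta;\field[\Delta])/(\Theta)\right)_{d-1}=\D\!\left((\LL_d^{d-1})_{-1}\right)+d\,\widetilde{\beta}_{d-2}(\Delta)-\sum_{v\in V}\widetilde{\beta}_{d-3}(\lk_\Delta v),
\]
so the whole proof is the computation of this dimension.

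To carry it out I would lean on the structural Lemmas \ref{lem1} and \ref{lem2}. Lemma \ref{lem1} shows that the degree-$(d-1)$ part of $\Sigma(\Theta;\field[\Delta])/(\Theta)$ is the sum of the subspaces $\pi_i(\M^0(\hat{\Theta}_i))_{d-1}$, while Lemma \ref{lem2} shows that in degree $d-1$ these subspaces meet pairwise only in $0$; together with the genericity of $\Theta$ this forces the sum to be direct, so
\[
\dim_\field\!\left(\Sigma(\Theta;\field[\Delta])/(\Theta)\right)_{d-1}=\sum_{i=1}^d\dim_\field\pi_i(\M^0(\hat{\Theta}_i))_{d-1}.
\]
By the long exact sequence of Lemma \ref{LESLemma}, applied with $S=\hat{\Theta}_i$ and added form $\theta_i$, the map $\pi_i$ induces an isomorphism of $\pi_i(\M^0(\hat{\Theta}_i))$ onto $\C\K^0(\hat{\Theta}_i,\theta_i)=\M^0(\hat{\Theta}_i)/\theta_i\M^0(\hat{\Theta}_i)$; and since all the relevant dimensions are invariant under permuting the entries of a generic $\Theta$, each of the $d$ summands has the same dimension as $\C\K^0(d)_{d-1}=\C\K^0((\theta_1,\ldots,\theta_{d-1}),\theta_d)_{d-1}$. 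Thus it remains only to compute $\dim_\field\C\K^0(d)_{d-1}$.

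For that last step I would re-run the inductive reduction of Proposition \ref{submodProp}, but keeping track of cokernels rather than kernels: the same snake-lemma diagrams that appear there identify the relevant top-degree cokernels with cokernels of multiplication maps $\cdot\theta_j\colon(\LL^\bullet_\bullet)_{-1}\to\M^{\bullet}(\emptyset)_0$, which by Gr\"abe's Theorem \ref{Grabe} and \eqref{costtolink} are governed by $\widetilde{\beta}_{d-2}(\Delta)$ and the numbers $\D((\LL^{d-1}_{j})_{-1})$; indeed the defining equality $\LL_j^i=\Ker(\cdot\theta_j\colon\LL_{j-1}^i\to\M^i(\emptyset)_0)$ makes the rank of $\cdot\theta_j$ here equal to $\D((\LL_{j-1}^{d-1})_{-1})-\D((\LL_{j}^{d-1})_{-1})$, which is exactly the shape that produces the formula. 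Summing over $i$, collapsing the telescoping sums of the $\D((\LL^{d-1}_j)_{-1})$-terms (as in the proofs of Corollary \ref{kernelDim} and Theorem \ref{hd1Thm}), and rewriting $\sum_{v\in V}\widetilde{\beta}_{d-3}(\lk_\Delta v)=\D(\M^{d-1}(\emptyset)_{-1})$ then yields the displayed identity.

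The main obstacle, I expect, is the structural step: showing that $(\Sigma(\Theta;\field[\Delta])/(\Theta))_{d-1}$ decomposes as the \emph{internal direct sum} of the $\pi_i(\M^0(\hat{\Theta}_i))_{d-1}$. Pairwise triviality of the intersections (Lemma \ref{lem2}) does not by itself give directness of a sum of $d\ge 3$ subspaces, so one must exploit the genericity of $\Theta$ together with the compatibility of the connecting maps $\varphi_i$ (via the commuting square borrowed from \cite[Appendix]{MNY}) to rule out nontrivial relations among the summands. The cokernel analogue of Proposition \ref{submodProp} should go through essentially verbatim, but some care is needed to check that the trivial-module-structure submodules $\Bb^i(j)$ interact correctly with $\cdot\theta_j$ when one passes to cokernels, and that the degree shifts conspire so that only the degree-$(d-1)$ piece survives.
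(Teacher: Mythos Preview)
Your proposal is correct and follows essentially the same route as the paper's own proof: decompose $(\Sigma(\Theta;\field[\Delta])/\Theta)_{d-1}$ as a direct sum of the $\pi_i(\M^0(\hat{\Theta}_i))_{d-1}$ via Lemmas \ref{lem1} and \ref{lem2}, identify each summand with the cokernel $\C\K^0(\hat{\Theta}_i,\theta_i)_{d-1}$ whose dimension is read off from the cokernel computation already carried out inside the proof of Proposition \ref{submodProp}, and then subtract from Theorem \ref{hd1Thm}. The directness concern you flag is treated in the paper no more carefully than you propose---it is simply asserted from Lemmas \ref{lem1} and \ref{lem2} together with genericity---and the only small point you leave implicit that the paper spells out is the reverse inclusion $\pi_i(\M^0(\hat{\Theta}_i))_{d-1}\subseteq(\Sigma(\Theta;\field[\Delta])/\Theta)_{d-1}$, which follows from $\mideal\cdot\M^0(\hat{\Theta}_i)_{d-1}=0$.
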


\begin{proof}
	First note that since $\mideal\cdot \M^0(\hat{\Theta}_{i})_{d-1}=0$, it must be that $\pi_i(\M^0(\hat{\Theta}_{i}))_{d-1}\subseteq \left(\frac{\Sigma(\Theta; \field[\Delta])}{\Theta}\right)_{d-1}$ for all $i$. Combining this with Lemmas \ref{lem1} and \ref{lem2},
	\[
	\left(\frac{\Sigma(\Theta; \field[\Delta])}{\Theta}\right)_{d-1}\cong \bigoplus_{i=1}^d \pi_i(\M^0(\hat{\Theta}_{i}))_{d-1}.
	\]
	Moreover, as noted in the previous proof, $\pi_i$ factors through the projection to produce an injection
	\[
	\M^0(\hat{\Theta}_{i})_{d-1}\to \C\K^0(\hat{\Theta}_{i}, \theta_i)_{d-1}\hookrightarrow \M^0(\Theta)_{d-1}.
	\]
	Now combining this with the dimension calculation of $\C\K^0(\hat{\Theta}_{i}, \theta_i)_{d-1}$ in Proposition \ref{submodProp} along with the generic assumption on $\Theta$, we obtain
	\[
	\D\left(\frac{\Sigma(\Theta; \field[\Delta])}{\Theta}\right)_{d-1} = d \cdot \D\left[\Coker \cdot \theta_i: \LL_d^{d-1}\to \M^{d-1}(\emptyset)\right]
	\]
	for any $i$. Once more, since $\LL_d^{d-1}=\Ker\left[\cdot\theta_d: \LL_{d-1}^{d-1}\to \M^{d-1}(\emptyset)\right]$, this can be re-written as
	\begin{align*}
		\D\left(\frac{\Sigma(\Theta; \field[\Delta])}{\Theta}\right)_{d-1} & = d\cdot \left[\widetilde{\beta}_{d-2}(\Delta)+ \D\left((\LL_{d}^{d-1})_{-1}\right)-\D\left((\LL_{d-1}^{d-1})_{-1}\right) \right].
	\end{align*}
	Now consider the maps
		\[
			f_i:\M^{d-1}(\emptyset)_{-1}\to \bigoplus_{j=1}^i \M^{d-1}(\emptyset)_0
		\]
	defined componentwise by $f_i(m)=(\theta_1\cdot m, \theta_2\cdot m, \ldots, \theta_i\cdot m)$. Then $\Ker f_i = (\LL_i^{d-1})_{-1}$, so we obtain
		\begin{equation}\label{ld}
			\D\left((\LL_{i}^{d-1})_{-1}\right)=\D(\Ker f_i) =  \D\left(\M^{d-1}(\emptyset)_{-1}\right)-\sum_{j=1}^i\D\left[\im\cdot \theta_j:\M^{d-1}(\emptyset)_{-1}\to  \M^{d-1}(\emptyset)_0\right].
		\end{equation}
	In particular,
		\[
			\D\left((\LL_{d}^{d-1})_{-1}\right)-\D\left((\LL_{d-1}^{d-1})_{-1}\right) = \D\left[\im\cdot \theta_d:\M^{d-1}(\emptyset)_{-1}\to  \M^{d-1}(\emptyset)_0\right],
		\]
	so 
		\[
			\D\left(\frac{\Sigma(\Theta; \field[\Delta])}{\Theta}\right)_{d-1} = d\cdot \widetilde{\beta}_{d-2}(\Delta)+d\cdot  \D\left[\im\cdot \theta_d:\M^{d-1}(\emptyset)_{-1}\to  \M^{d-1}(\emptyset)_0\right].
		\]
	Combining this equality with Theorem \ref{hd1Thm} yields
		\begin{align*}
			\D\left(\frac{\field[\Delta]}{\Sigma(\Theta; \field[\Delta])}\right)_{d-1} & = \D\left(\frac{\field[\Delta]}{\Theta}\right)_{d-1}-\D\left(\frac{\Sigma(\Theta; \field[\Delta])}{\Theta}\right)_{d-1}\\
			&= h_{d-1}(\Delta) + \D\left((\LL_{d}^{d-1})_{-1}\right) + (-1)^{d-1}\sum_{F\in \Delta}\binom{d-|F|}{d-1}\widetilde{\chi}_{d-3-|F|}(\lk_\Delta F)\\
			&\hspace{20pt}- \left(d\cdot \widetilde{\beta}_{d-2}(\Delta)+d\cdot  \D\left[\im\cdot \theta_d:\M^{d-1}(\emptyset)_{-1}\to  \M^{d-1}(\emptyset)_0\right]\right).
		\end{align*}
	Now using \eqref{ld} and appealing once more to the genericity of $\Theta$, from which it follows that
		\[
			\D\left[\im\cdot \theta_d:\M^{d-1}(\emptyset)_{-1}\to  \M^{d-1}(\emptyset)_0\right]=\D\left[\im\cdot \theta_i:\M^{d-1}(\emptyset)_{-1}\to  \M^{d-1}(\emptyset)_0\right]
		\]
	for all $i$, we obtain
		\begin{align*}
		\D\left(\frac{\field[\Delta]}{\Sigma(\Theta; \field[\Delta])}\right)_{d-1} & = h_{d-1}(\Delta)  + (-1)^{d-1}\sum_{F\in \Delta}\binom{d-|F|}{d-1}\widetilde{\chi}_{d-3-|F|}(\lk_\Delta F)\\
		& \hspace{20pt} + \D\left(\M^{d-1}(\emptyset)_{-1}\right) - d\cdot \widetilde{\beta}_{d-2}(\Delta) \\
		&= h_{d-1}(\Delta)+(-1)^{d-1}\sum_{F\in \Delta}\binom{d-|F|}{d-1}\widetilde{\chi}_{d-2-|F|}(\lk_\Delta F).
		\end{align*}
\end{proof}

\section{An application to suspensions of Buchsbaum complexes}\label{sect:apps}

A $(d-1)$-dimensional complex $\Delta$ is said to have isolated singularities if $\widetilde{\beta}_i(\lk_\Delta F)=0$ for all $i<d-1-|F|$ and all faces $F\in\Delta$ with $|F|\ge 2$ while failing this condition for at least one face $F$ with $|F|=1$. As stated in Section \ref{sect:intro}, algebraic $h$-vectors of complexes with isolated singularities have been studied in some depth in \cite{MNS-sing} and \cite{NS-sing}. However, those computations depended upon a further assumption that the singularities of $\Delta$ are homologically isolated, stipulating that images of inclusion maps of the form
	\[
	H^i(\Delta, \cost_\Delta v)\to H^i(\Delta, \cost_\Delta \emptyset)
	\]
across singular vertices $v$ have trivial intersection for $i<d-1$. It turns out that in this case, \cite[Lemma 4.3]{NS-sing} shows that quotienting $\field[\Delta]$ only by $(\theta_1)$ results in a Buchsbaum $A$-module, allowing for an easy resumption of Schenzel's classic techniques on further quotients to compute the algebraic $h$-vector (in fact, this is an equivalent characterization of homological isolation of singularities, see \cite[Proposition 4.5]{AlmostBuchs}).

When singularities are not homologically isolated, the situation has the potential to become much more complex. In particular, even the simple case of $\Delta$ being a triangulation of the suspension of a manifold that is not a sphere, not much has previously been written about $h^\alg(\Delta)$. In this section we will use the methods of Section \ref{sect:results} to calculate the $h^\alg$-vector for such a complex.

Let $\Delta$ be a triangulation of the suspension of (the geometric realization of) a Buchsbaum complex that is not Cohen--Macaulay. It turns out to be relatively straightforward to calculate the structure and dimensions of $\M^i(j)$ for arbitrary $j$, as \cite[Section 3.2]{AlmostBuchs} shows that
	\begin{equation}\label{suspDims}
		\M^i(2)_j\cong \left\{\begin{array}{cc} \widetilde{H}^{i}(\Delta) & j=0 \\ \widetilde{H}^{i+1}(\Delta) & j=2. \end{array}\right.
	\end{equation}
Given that $\M^i(2)$ is concentrated in degrees $0$ and $2$, it has trivial module structure. Hence, the same is seen to be true of $\M^i(j)$ for all $j\ge 2$ by recursively applying Lemma \ref{diagramlemma}, and
	\[
		\M^i(j+1)\cong \M^i(j)\bigoplus\left( \M^{i+1}(j)[-1]\right)
	\]
for $j\ge 2$. Then
	\[
		\M^0(j)\cong \bigoplus_{i=0}^{j-2}\left(\bigoplus_{\binom{j-2}{i}}\M^i(2)[-i]\right),
	\]
and in consideration of \eqref{suspDims},
	\begin{equation}
		\D\left(\M^0(j)_i\right)=\binom{j-2}{i}\widetilde{\beta}_i(\Delta)+\binom{j-2}{i-2}\widetilde{\beta}_{i-1}(\Delta)
	\end{equation}
for $j\ge 2$. The technique of Section \ref{sect:results} can now be applied to calculate the full $h^\alg$-vector for a complex of this type.

\begin{theorem}\label{SuspThm}
	Let $\Delta$ be a triangulation of the suspension of (the geometric realization of) a Buchsbaum complex, and suppose that $\Delta$ is of dimension $d-1$. Then
		\[
			h_{i}^\alg(\Delta)-h_{i}(\Delta)=(-1)^i\left[\binom{d-2}{i-2}\widetilde{\chi}_{i-2}(\Delta) - \binom{d-2}{i}\widetilde{\chi}_{i-1}(\Delta)\right]
		\]
		for $i=0, \ldots, d$.
\end{theorem}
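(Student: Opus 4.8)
The plan is to feed the explicit description of the modules $\M^0(j)$ into the Hilbert series formula of Lemma \ref{HilbertSeries} and read off the coefficient of $t^i$. By that lemma,
\[
h_i^\alg(\Delta)-h_i(\Delta)=[t^i]\sum_{j=1}^d(1-t)^{d-j}\,t\,\Hilb(\K^0(j),t),
\]
so first I would identify the kernel modules $\K^0(j)$. Being a triangulated suspension, $\Delta$ is connected, pure of dimension $d-1\ge 2$, and has no isolated vertices, so $\depth\field[\Delta]\ge 2$; in particular $\M^0(\emptyset)=H_\mideal^0(\field[\Delta])=0$, and since a generic $\theta_1$ is a non-zero-divisor on $\field[\Delta]$ we also get $\M^0(1)=H_\mideal^0(\field[\Delta]/(\theta_1))=0$. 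Hence $\K^0(1)=\K^0(2)=0$. For $j\ge 3$, the module $\M^0(j-1)$ has trivial module structure, so $\K^0(j)=\Ker(\cdot\theta_j\colon\M^0(j-1)\to\M^0(j-1))=\M^0(j-1)$, and the sum above collapses to $\sum_{j=3}^{d}(1-t)^{d-j}t\,\Hilb(\M^0(j-1),t)$.

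Next I would substitute the Hilbert series coming from the decomposition $\M^0(m)\cong\bigoplus_{\ell\ge 0}\bigl(\M^\ell(2)[-\ell]\bigr)^{\oplus\binom{m-2}{\ell}}$ together with \eqref{suspDims}, namely
\[
\Hilb(\M^0(m),t)=\sum_{\ell\ge 0}\binom{m-2}{\ell}\left(\widetilde\beta_\ell(\Delta)\,t^{\ell}+\widetilde\beta_{\ell+1}(\Delta)\,t^{\ell+2}\right),
\]
and write $m=j-1$ so that the outer factor is $(1-t)^{d-1-m}$. Extracting the coefficient of $t^i$ and interchanging summations, the coefficient of each $\widetilde\beta_j(\Delta)$ becomes a single sum over $m$ — equivalently over $n=m-2$ ranging from $0$ to $d-3$ — of a signed product $\binom{n}{a}\binom{d-3-n}{b}$, one factor coming from $\binom{m-2}{\ell}$ and the other from the binomial expansion of $(1-t)^{d-1-m}$.

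The crux is then to collapse these inner sums by the Vandermonde-type identity $\sum_{n\ge 0}\binom{n}{a}\binom{N-n}{b}=\binom{N+1}{a+b+1}$ with $N=d-3$. The terms built from $\widetilde\beta_\ell$ collapse to $(-1)^{i+j+1}\binom{d-2}{i}\,\widetilde\beta_j(\Delta)$ and those built from $\widetilde\beta_{\ell+1}$ to $(-1)^{i+j}\binom{d-2}{i-2}\,\widetilde\beta_j(\Delta)$; because $\binom{N-n}{b}$ vanishes once $b<0$, these identities are automatically supported on $j\le i-1$ and $j\le i-2$ respectively, precisely the truncation ranges occurring in $\widetilde\chi_{i-1}(\Delta)$ and $\widetilde\chi_{i-2}(\Delta)$. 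Summing over $j$ and using $\widetilde\beta_{-1}(\Delta)=\widetilde\beta_0(\Delta)=0$ (as $\Delta$ is nonempty and connected) to absorb the one stray endpoint contribution, one recognises the result as
\[
(-1)^i\left[\binom{d-2}{i-2}\widetilde\chi_{i-2}(\Delta)-\binom{d-2}{i}\widetilde\chi_{i-1}(\Delta)\right].
\]

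I expect the only genuine obstacle to be bookkeeping: keeping the nested indices (over $j$ or $m$, over $\ell$, and over the two monomials $t^\ell,t^{\ell+2}$ versus the expansion of $(1-t)^{d-1-m}$) organised so the Vandermonde collapse is visible, and tracking signs and the vanishing of binomials at the boundary. Conceptually the argument is short because the first step eliminates $\K^0(1)$ and $\K^0(2)$ outright, so the whole computation runs on the clean trivial-module description of $\M^0(j)$ rather than on the full machinery of Section \ref{sect:results}.
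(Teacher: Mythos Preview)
Your proposal is correct and follows essentially the same route as the paper: both arguments use $\depth\field[\Delta]\ge 2$ to kill $\K^0(1)$ and $\K^0(2)$, identify $\K^0(j)=\M^0(j-1)$ for $j\ge 3$ via the trivial module structure, plug the explicit dimensions of $\M^0(j-1)$ into Lemma~\ref{HilbertSeries}, and collapse the resulting double sum with the Vandermonde identity $\sum_{j}\binom{d-j}{i-k}\binom{j-3}{k}=\binom{d-2}{i+1}$ (your formulation with $n=m-2$ and $N=d-3$ is just a reindexing of this). The only cosmetic difference is that the paper extracts the coefficient of $t^{i+1}$ and then shifts, while you extract $t^i$ directly.
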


\begin{proof}
Since $\M^i(j)$ has trivial module structure for $j\ge 2$ from the above comments,
	\[
		\K^0(j) =\M^0(j-1)
	\]
for $j\ge 3$. Also, since $\Delta$ triangulates a suspension, it is connected. Thus, the depth of $\field[\Delta]$ is at least $2$ (see \cite[Corollary 2.6]{HibiDepth}), so $\K^0(1)=\K^0(2)=0$ and, by Lemma \ref{HilbertSeries},
	\begin{align*}
		\Hilb(\field[\Delta]/\Theta, t) &= \sum_{i=0}^d h_i(\Delta)t^i + \sum_{j=1}^d (1-t)^{d-j}t\Hilb(\K^0(j), t)\\
		&= \sum_{i=0}^d h_i(\Delta)t^i+\sum_{j=1}^d\left[(1-t)^{d-j}t\sum_{k}t^k\left(\binom{j-3}{k}\widetilde{\beta}_k(\Delta)+\binom{j-3}{k-2}\widetilde{\beta}_{k-1}(\Delta)\right)\right].
	\end{align*}
Hence, $h_{i+1}^\alg(\Delta)-h_{i+1}(\Delta)$ is equal to the coefficient on $t^{i+1}$ of the polynomial formed in the second summation above. Equivalently, considering the factor of $t$ in all terms of this sum, this $t^{i+1}$ coefficient is the same as the coefficient on $t^i$ of the polynomial
	\[
		\sum_{j=1}^d\left[(1-t)^{d-j}\sum_{k}t^k\left(\binom{j-3}{k}\widetilde{\beta}_k(\Delta)+\binom{j-3}{k-2}\widetilde{\beta}_{k-1}(\Delta)\right)\right],
	\]
which is the same as
	\begin{align*}
		&\sum_{j=1}^d\left[\sum_{k+\ell=i}(-1)^\ell\binom{d-j}{\ell}\left(\binom{j-3}{k}\widetilde{\beta}_k(\Delta)+\binom{j-3}{k-2}\widetilde{\beta}_{k-1}(\Delta)\right)\right]\\
		&=\sum_{j=1}^d\left[\sum_{k=0}^i(-1)^{i-k}\binom{d-j}{i-k}\left(\binom{j-3}{k}\widetilde{\beta}_k(\Delta)+\binom{j-3}{k-2}\widetilde{\beta}_{k-1}(\Delta)\right)\right].
	\end{align*}
This can now be broken into two sums. For the ``$\widetilde{\beta}_k(\Delta)$'' terms,
	\begin{align*}
		\sum_{j=1}^d\left[\sum_{k=0}^i(-1)^{i-k}\binom{d-j}{i-k}\binom{j-3}{k}\widetilde{\beta}_k(\Delta)\right]&=\sum_{k=0}^i(-1)^{i-k}\widetilde{\beta}_k(\Delta)\left[\sum_{j=1}^d\binom{d-j}{i-k}\binom{j-3}{k}\right]\\
		&=\sum_{k=0}^i(-1)^{i-k}\widetilde{\beta}_k(\Delta)\binom{d-2}{i+1}.
	\end{align*}
Similarly, the ``$\widetilde{\beta}_{k-1}(\Delta)$'' terms can be written as
	\begin{align*}
		\sum_{j=1}^d\left[\sum_{k=0}^i(-1)^{i-k}\binom{d-j}{i-k}\binom{j-3}{k-2}\widetilde{\beta}_{k-1}(\Delta)\right]&=\sum_{k=0}^i(-1)^{i-k}\widetilde{\beta}_{k-1}(\Delta)\left[\sum_{j=1}^d\binom{d-j}{i-k}\binom{j-3}{k-2}\right]\\
		&=\sum_{k=0}^i(-1)^{i-k}\widetilde{\beta}_{k-1}(\Delta)\binom{d-2}{i-1}.
	\end{align*}

Thus,
	\[
		h_{i+1}^\alg(\Delta)-h_{i+1}(\Delta)=(-1)^{i+1}\binom{d-2}{i-1}\widetilde{\chi}_{i-2}(\Delta)+(-1)^i\binom{d-2}{i+1}\widetilde{\chi}_i(\Delta).
	\]
\end{proof}

A straightforward calculation shows that if $\Delta$ is the direct suspension of a $(d-2)$-dimensional complex $\Gamma$, then 
	\begin{equation}\label{sushvector}
		h_i(\Delta)=h_i(\Gamma)+h_{i-1}(\Gamma)
	\end{equation}
	for $i=0, \ldots, d$. With this combinatorial relationship in mind, it is worth examining which relationships may exist between between the $h^\alg$-vectors of a Buchsbaum complex and its suspension now that Theorem \ref{SuspThm} can produce the $h^\alg$-vector of the suspension, resulting in the next corollary.

\begin{corollary}
	Let $\Gamma$ be a $(d-2)$-dimensional Buchsbaum complex, and let $\Delta$ be the suspension of $\Gamma$. Then
	\[
		h_i^\alg(\Delta)=h_i^\alg(\Gamma)+h_{i-1}^\alg(\Gamma)-\binom{d-2}{i-1}\widetilde{\beta}_{i-2}(\Gamma)
	\]
	for $i=0, \ldots, d$.
\end{corollary}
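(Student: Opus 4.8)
The plan is to combine the formula for the suspension's algebraic $h$-vector from Theorem~\ref{SuspThm} with Schenzel's formula (Theorem~\ref{Schenzel}) applied to the Buchsbaum complex $\Gamma$, and then reconcile the two using the combinatorial suspension identity \eqref{sushvector}. First I would record what Schenzel's theorem gives for $\Gamma$: since $\Gamma$ is Buchsbaum of dimension $d-2$, we have $h_i^\alg(\Gamma) = h_i(\Gamma) + (-1)^i\binom{d-1}{i}\widetilde{\chi}_{i-2}(\Gamma)$ for all $i$. This lets me express each of $h_i^\alg(\Gamma)$ and $h_{i-1}^\alg(\Gamma)$ purely in terms of combinatorial $h$-numbers of $\Gamma$ and truncated Euler characteristics of $\Gamma$. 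Note that the reduced Betti numbers of $\Delta$ (the suspension) coincide with those of $\Gamma$ shifted by one degree, i.e.\ $\widetilde{\beta}_i(\Delta) = \widetilde{\beta}_{i-1}(\Gamma)$, so that $\widetilde{\chi}_i(\Delta) = -\widetilde{\chi}_{i-1}(\Gamma)$; I would substitute this into the conclusion of Theorem~\ref{SuspThm} to rewrite $h_i^\alg(\Delta) - h_i(\Delta)$ entirely in terms of invariants of $\Gamma$.

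Next I would expand the right-hand side of the claimed identity. Using \eqref{sushvector} to write $h_i(\Delta) = h_i(\Gamma) + h_{i-1}(\Gamma)$ and then Schenzel's formula for both $h_i^\alg(\Gamma)$ and $h_{i-1}^\alg(\Gamma)$, the quantity $h_i^\alg(\Gamma) + h_{i-1}^\alg(\Gamma) - \binom{d-2}{i-1}\widetilde{\beta}_{i-2}(\Gamma)$ becomes
\[
h_i(\Delta) + (-1)^i\binom{d-1}{i}\widetilde{\chi}_{i-2}(\Gamma) + (-1)^{i-1}\binom{d-1}{i-1}\widetilde{\chi}_{i-3}(\Gamma) - \binom{d-2}{i-1}\widetilde{\beta}_{i-2}(\Gamma).
\]
So the whole corollary reduces to the purely numerical identity
\[
(-1)^i\binom{d-1}{i}\widetilde{\chi}_{i-2}(\Gamma) - (-1)^i\binom{d-1}{i-1}\widetilde{\chi}_{i-3}(\Gamma) - \binom{d-2}{i-1}\widetilde{\beta}_{i-2}(\Gamma) = (-1)^i\left[\binom{d-2}{i-2}\widetilde{\chi}_{i-2}(\Delta) - \binom{d-2}{i}\widetilde{\chi}_{i-1}(\Delta)\right],
\]
where on the right I have already substituted the expression from Theorem~\ref{SuspThm}. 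I would then convert the right-hand side back to $\Gamma$-invariants via $\widetilde{\chi}_{i-1}(\Delta) = -\widetilde{\chi}_{i-2}(\Gamma)$ and $\widetilde{\chi}_{i-2}(\Delta) = -\widetilde{\chi}_{i-3}(\Gamma)$, turning everything into a relation among $\widetilde{\chi}_{i-2}(\Gamma)$, $\widetilde{\chi}_{i-3}(\Gamma)$, and $\widetilde{\beta}_{i-2}(\Gamma)$ with binomial coefficient weights.

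The main obstacle — really the only genuine computation — is verifying this last binomial identity. The key simplification I expect is that $\widetilde{\chi}_{i-2}(\Gamma) - \widetilde{\chi}_{i-3}(\Gamma) = (-1)^{i-2}\widetilde{\beta}_{i-2}(\Gamma)$ directly from the definition \eqref{truncdef} of the truncated Euler characteristic, which should let me collapse pairs of adjacent-index terms. After grouping terms by whether they carry $\widetilde{\chi}_{i-2}(\Gamma)$, $\widetilde{\chi}_{i-3}(\Gamma)$, or the leftover $\widetilde{\beta}_{i-2}(\Gamma)$, matching coefficients reduces to Pascal-type relations such as $\binom{d-1}{i} = \binom{d-2}{i} + \binom{d-2}{i-1}$ and $\binom{d-1}{i-1} = \binom{d-2}{i-1} + \binom{d-2}{i-2}$, together with the sign bookkeeping from the $(-1)^i$ factors. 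I would carry out this check carefully, keeping track of the $\widetilde{\beta}_{i-2}(\Gamma)$ correction term, and confirm both sides agree, which completes the proof.
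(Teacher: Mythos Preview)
Your proposal is correct and follows essentially the same route as the paper: both combine Theorem~\ref{SuspThm}, Schenzel's formula for $\Gamma$, and the suspension identity \eqref{sushvector}, then reduce everything to a Pascal-rule computation together with $\widetilde{\chi}_{i-2}(\Gamma)-\widetilde{\chi}_{i-3}(\Gamma)=(-1)^{i}\widetilde{\beta}_{i-2}(\Gamma)$. The only cosmetic difference is that the paper organizes the algebra by directly computing $[h_i^\alg(\Gamma)+h_{i-1}^\alg(\Gamma)]-h_i^\alg(\Delta)$ and showing it equals $\binom{d-2}{i-1}\widetilde{\beta}_{i-2}(\Gamma)$, whereas you set up the identity first and then verify it; the underlying manipulations are identical.
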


\begin{proof}
	Since $\widetilde{\beta}_j(\Delta)=\widetilde{\beta}_{j-1}(\Gamma)$ for all $j$, a shift in index on one of the sums in Theorem \ref{SuspThm} provides the equation
		\[
			h_i^\alg(\Delta)=h_i(\Delta)+ \binom{d-2}{i-2}\sum_{j=0}^{i-2}(-1)^{i-j}\widetilde{\beta}_{j-1}(\Gamma) + \binom{d-2}{i}\sum_{j=0}^{i-1}(-1)^{i-j-1}\widetilde{\beta}_{j-1}(\Gamma).
		\]
	On the other hand, Schenzel's Theorem \ref{Schenzel} tells us that
		\[
			h_i^\alg(\Gamma)=h_i(\Gamma)+\binom{d-1}{i}\sum_{j=0}^{i-1}(-1)^{i-j-1}\widetilde{\beta}_{j-1}(\Gamma).
		\]
	Then using \eqref{sushvector},
		\begin{align*}
			&\left[h_i^\alg(\Gamma) +h_{i-1}^\alg(\Gamma)\right] - h_i^\alg(\Delta)\\
			&=\left[\binom{d-1}{i}-\binom{d-2}{i}\right]\sum_{j=0}^{i-1}(-1)^{i-j-1}\widetilde{\beta}_{j-1}(\Gamma) + \left[\binom{d-1}{i-1}-\binom{d-2}{i-2}\right]\sum_{j=0}^{i-2}(-1)^{i-j}\widetilde{\beta}_{j-1}(\Gamma)\\
			&=\binom{d-2}{i-1}\sum_{j=0}^{i-1}(-1)^{i-j-1}\widetilde{\beta}_{j-1}(\Gamma)+\binom{d-2}{i-1}\sum_{j=0}^{i-2}(-1)^{i-j}\widetilde{\beta}_{j-1}(\Gamma)\\
			&= \binom{d-2}{i-1}\widetilde{\beta}_{i-2}(\Gamma)
		\end{align*}
\end{proof}

\section{Further comments and possible extensions}\label{sect:conclusion}

As the optimistic title of this paper suggests, we are hopeful that the current results can be further generalized and utilized to great effect. There are two principal avenues which we wish to investigate in the future.

\subsection{Lower entries in $h^\sig(\Delta)$}
Thus far we have only dealt with the specific case of $h_{d-1}^\sig(\Delta)$ afforded by the relatively nice description of local cohomology modules in special degrees stated in Proposition \ref{submodProp}. The fact that so many of the terms involved ``collapsed'' in such a spectacular fashion during the calculations in the proofs of Theorems \ref{hd1Thm} and \ref{sigmamodThm} has allowed for us to be very hopeful that these methods may be pushed further to accommodate a description of $h_i^\sig(\Delta)$ for \textit{all} values of $i$. In particular, a natural extension of Theorem \ref{sigmamodThm} provides the following conjecture.
\begin{conjecture}\label{mainconjecture}
	If $\Delta$ is a $(d-1)$-dimensional simplicial complex and $\Theta=\theta_1, \ldots, \theta_d$ is a generic l.s.o.p~ for $\field[\Delta]$, then there exists a submodule $\tau(\Theta; \field[\Delta])\subset \field[\Delta]$ such that
	\[
		\dim_\field\left(\frac{\field[\Delta]}{\tau(\Theta; \field[\Delta])} \right)_i = h_i(\Delta)+(-1)^i\sum_{F\in \Delta}\binom{d-|F|}{i}\widetilde{\chi}_{i-1-|F|}(\lk_\Delta F).
	\]
	Furthermore, in the case that $\Delta$ is Buchsbaum, $\tau(\Theta; \field[\Delta])=\Sigma(\Theta; \field[\Delta])$.
\end{conjecture}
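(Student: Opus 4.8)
The plan is to replay the argument behind Theorems~\ref{hd1Thm} and~\ref{sigmamodThm}, but one degree at a time rather than only in the single nontrivial degree $d-1$, and then to exhibit a submodule that implements the shift from $\widetilde\chi_{i-2-|F|}$ to $\widetilde\chi_{i-1-|F|}$. The first and heaviest step is to upgrade Proposition~\ref{submodProp} and Corollary~\ref{kernelDim} from a description of $\K^0(j)$ in degrees $\ge j-2$ to a description of the graded module $\M^i(j)$, hence of $\K^0(j)$, in \emph{every} degree. Concretely, I would iterate the commutative diagram of Lemma~\ref{diagramlemma} and the snake-lemma construction of the submodules $\Bb^i(j)$, now tracking all degrees at once, to obtain for each $j$ a finite filtration of $\M^0(j)$ whose subquotients are shifts of graded pieces of the modules $\LL_p^q$; by Gr\"abe's Theorem~\ref{Grabe} these are in turn intersections of kernels of generic linear combinations of inclusion-induced maps on the relative cohomology of $\Delta$, hence computable topologically. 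This structural analysis is where I expect the genuine difficulty: in the top degree the relevant $\K^0(j)$ is concentrated in only two degrees, which is exactly why the bookkeeping collapses, whereas in intermediate degrees the maps $\cdot\theta_p$ on $\M^\bullet(\emptyset)$ between degrees $-1$ and $0$ interact across several homological indices, so one must control a genuinely multi-indexed, spectral-sequence-like array rather than a single kernel.

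Granting this, the second step is routine in spirit: substitute the Hilbert series of each $\K^0(j)$ into Lemma~\ref{HilbertSeries}, extract the coefficient of $t^i$, and collapse the sums with the same binomial identities used in the proofs of Theorems~\ref{hd1Thm} and~\ref{SuspThm}. I anticipate this yields
\[
h_i^\alg(\Delta)=h_i(\Delta)+(-1)^i\sum_{F\in\Delta}\binom{d-|F|}{i}\widetilde\chi_{i-2-|F|}(\lk_\Delta F)+E_i,
\]
where $E_i$ is an explicit alternating combination of the numbers $\D\big((\LL_p^q)_{-1}\big)$ --- the part of the homology of $\Delta$ itself that the links cannot see --- generalizing the lone surviving term $\D\big((\LL_d^{d-1})_{-1}\big)$ of Theorem~\ref{hd1Thm}.

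The third step is to define $\tau(\Theta;\field[\Delta])$ and to show that replacing $\field[\Delta]/\Theta$ by $\field[\Delta]/\tau$ simultaneously deletes $E_i$ and converts each $\widetilde\chi_{i-2-|F|}(\lk_\Delta F)$ into $\widetilde\chi_{i-1-|F|}(\lk_\Delta F)$. Following the proof of Theorem~\ref{sigmamodThm}, where $(\Sigma(\Theta;\field[\Delta])/\Theta)_{d-1}$ decomposes as $\bigoplus_i\pi_i(\M^0(\hat\Theta_i))_{d-1}$, the natural candidate is the submodule containing $\Theta\field[\Delta]$ whose image in $\field[\Delta]/\Theta$ is the sum of all iterated projections of the torsion submodules $H^0_\mideal\big(\field[\Delta]/(\Theta\smallsetminus S)\big)$ over subsets $S\subseteq\Theta$; for $|S|=1$ this recovers the summands used for $i=d-1$, and for larger $S$ the multiplicity $\binom{d-|F|}{i}$ should emerge from binomial counting of which omitted subsets are involved. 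Computing $\D\big((\tau/\Theta)_i\big)$ then asks for the analogues in every degree of Lemmas~\ref{lem1} and~\ref{lem2}: that every element of $\tau/\Theta$ lies in one such iterated image, and that the images attached to distinct omitted sets meet only inside those attached to smaller sets, so that the degree-$i$ piece splits as a direct sum indexed by faces. This is the second place I expect real trouble, since the collapsing identity $\mideal\cdot\M^0(\hat\Theta_i)_{d-1}=0$ that makes the sum in Lemma~\ref{lem2} direct has no obvious analogue in lower degrees; one may need a finer filtration argument, or an intrinsic combinatorial description of $\tau$ via the faces interior to the singular locus of $\Delta$.

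Finally, the Buchsbaum case is a built-in check. There every $\M^i(\emptyset)$ with $i<d$ is concentrated in degree $0$, so each $\D\big((\LL_p^q)_{-1}\big)$ and hence $E_i$ vanishes; each torsion module $H^0_\mideal(\field[\Delta]/(\Theta\smallsetminus S))$ is already $\mideal$-annihilated, so that for every partial quotient $H^0_\mideal$ agrees with the corresponding kernel $\K^0$ and the higher iterated projections become redundant; and the colon description of $\Sigma(\Theta;\field[\Delta])$ is precisely recovered by $\sum_i\pi_i(\M^0(\hat\Theta_i))$. Thus $\tau(\Theta;\field[\Delta])=\Sigma(\Theta;\field[\Delta])$ and the displayed formula collapses to Theorem~\ref{BBMD}, confirming both the candidate for $\tau$ and the sign conventions.
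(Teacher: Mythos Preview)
The statement you are addressing is labeled a \emph{conjecture} in the paper (Conjecture~\ref{mainconjecture}), and the paper offers no proof of it whatsoever --- only the motivating discussion in Section~\ref{sect:conclusion} and a tentative candidate for $\tau$. There is therefore no ``paper's own proof'' to compare against. What you have written is not a proof either: it is a proof \emph{strategy}, and you yourself flag the two places (``genuine difficulty'', ``real trouble'') where the argument is not actually carried out.

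Both flagged gaps are real. First, upgrading Proposition~\ref{submodProp} from degrees $\ge j-2$ to all degrees is precisely the obstruction that confines the paper to $h_{d-1}$: in those two degrees the relevant maps $\cdot\theta_p$ on $\M^\bullet(\emptyset)$ run only from degree $-1$ to degree $0$, so a single kernel $\LL^{i+j-1}_j$ captures everything, whereas in lower degrees the iterated snake lemma produces a genuinely multi-layered filtration whose subquotients are \emph{not} all of the form $\LL_p^q$ and do not telescope in any evident way. Your description of a ``spectral-sequence-like array'' is accurate, but you have not shown that its terms can be evaluated, let alone that they collapse to the claimed closed form. Second, the direct-sum decomposition behind Lemma~\ref{lem2} depends on $\mideal\cdot\M^0(\hat\Theta_i)_{d-1}=0$; as you note, this fails in lower degrees, and you give no substitute argument.

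One point of comparison with the paper's discussion is worth recording. The paper's suggested candidate is $\tau(\Theta;\field[\Delta])=\sum_{j=1}^d\pi_j(\M^0(\hat\Theta_j))$, a sum over \emph{singleton} omissions only. Your candidate is strictly larger, summing iterated projections of $H^0_\mideal(\field[\Delta]/(\Theta\smallsetminus S))$ over \emph{all} subsets $S\subseteq\Theta$. In degree $d-1$ and in the Buchsbaum case the two coincide (trivial module structure makes the extra summands redundant), but in general they need not; neither the paper nor your proposal settles which, if either, yields the conjectured formula. Your final paragraph on the Buchsbaum specialization is plausible but also not a proof: that the higher-$|S|$ projections become redundant and that your $\tau$ then equals $\Sigma$ requires an argument beyond ``$\M^0$ is $\mideal$-annihilated'', since one must still check that the images under the composite $\pi$'s for $|S|>1$ are already contained in the $|S|=1$ summands.
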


An obvious candidate for the submodule $\tau(\Theta; \field[\Delta])$ in the above conjecture would be $\Sigma(\Theta;\field[\Delta])$, but it is very possible that a different choice may turn out to be more meaningful and produce the conjectured equality. As seen in the discussion leading up to Theorem \ref{sigmamodThm}, the sigma submodule satisfies
	\[
		\Sigma(\Theta; \field[\Delta])/\Theta = \sum_{j=1}^d \pi_j\left(\K^0(\hat{\Theta}_j, \theta_j)\right).
	\]
While the sigma submodule has been well-studied, it seems unnecessary to restrict ourselves to this submodule in particular when studying Hilbert series of reductions of Stanley--Reisner rings. Indeed, it seems very natural to instead consider the submodule
	\[
		\tau(\Theta; \field[\Delta]) = \sum_{j=1}^d\pi_j\left(\M^0(\hat{\Theta}_j)\right).
	\]
Note that in the calculations of $\Sigma(\Theta; \field[\Delta])_i$ that currently exist (those being the Buchsbaum case in \cite{MNY} and the $i=d-1$ case in this paper), trivial module structure has ensured that
	\[
		\tau(\Theta; \field[\Delta])_i=\Sigma(\Theta; \field[\Delta])_i
	\]
in the relevant degrees. However, this may not be true in greater generality. Furthermore, the proof techniques used in this paper suggest that computing the dimensions of graded pieces of the suggested $\tau(\Theta; \field[\Delta])$ may be more approachable than computing those of $\Sigma(\Theta; \field[\Delta])$ to begin with.

\subsection{Symmetries in $h^\sig(\Delta)$}

The following result concerning homology manifolds elegantly combines the algebraic computations of $h^\sig(\Delta)$, Poincar\'{e} duality, and Klee's combinatorial Dehn-Sommerville relations found in \cite{GrabeDS}.

\begin{theorem}\label{manifoldsymmetry}{\cite[Proposition 1.1]{MN-bdry}}
	Let $\Delta$ be a connected $(d-1)$-dimensional orientable homology manifold. Then
	\[
	h_i^\sig(\Delta) = h_{d-i}^\sig(\Delta)
	\]
	for all $0\le i\le d$.
\end{theorem}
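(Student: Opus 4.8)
The plan is to combine the three ingredients named in the sentence preceding the statement: the algebraic formula for $h^\sig$ in the Buchsbaum case, Poincar\'e duality, and Klee's Dehn--Sommerville relations. First, note that a homology manifold is Buchsbaum: for every nonempty face $F$ the link $\lk_\Delta F$ is a $\field$-homology $(d-1-|F|)$-sphere, so $\widetilde{\beta}_j(\lk_\Delta F)=0$ for $j<d-1-|F|$, which via the isomorphism \eqref{costtolink} is precisely the Buchsbaum condition (and $\Delta$ is pure). Hence Theorem \ref{BBMD} applies and gives
\[
h_i^\sig(\Delta)=h_i(\Delta)+(-1)^i\binom{d}{i}\widetilde{\chi}_{i-1}(\Delta)\qquad(0\le i\le d-1),
\]
while in the top degree $h_d^\sig(\Delta)=h_d^\alg(\Delta)=\widetilde{\beta}_{d-1}(\Delta)$, since $\Sigma(\Theta;\field[\Delta])$ agrees with $(\Theta)$ in degree $d$ (e.g.\ by Lemma \ref{lem1} together with Gr\"abe's theorem, which confine $\M^0(\hat{\Theta}_i)$ to degrees $\le d-1$) and $h_d^\alg(\Delta)=\widetilde{\beta}_{d-1}(\Delta)$ is known. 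It now suffices to handle the pair $\{i,d-i\}=\{0,d\}$ and the pairs with $1\le i\le d-1$ separately.

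For the pair $\{0,d\}$ we are done immediately: $h_0^\sig(\Delta)=h_0(\Delta)=1$, while connectedness and orientability give $\widetilde{H}^{d-1}(\Delta)\cong H_0(\Delta)=\field$ by Poincar\'e duality, so $h_d^\sig(\Delta)=\widetilde{\beta}_{d-1}(\Delta)=1=h_0^\sig(\Delta)$. This is the one step in which orientability is genuinely used (for a connected homology manifold that is not $\field$-orientable these two numbers disagree).

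For $1\le i\le d-1$ both $i$ and $d-i$ lie in $\{1,\dots,d-1\}$, so I would substitute the displayed formula on both sides; using $\binom{d}{i}=\binom{d}{d-i}$ and $(-1)^{d-i}=(-1)^d(-1)^i$, the equality $h_i^\sig(\Delta)=h_{d-i}^\sig(\Delta)$ becomes
\[
h_{d-i}(\Delta)-h_i(\Delta)=(-1)^i\binom{d}{i}\bigl(\widetilde{\chi}_{i-1}(\Delta)-(-1)^d\widetilde{\chi}_{d-i-1}(\Delta)\bigr).
\]
Klee's Dehn--Sommerville relations for homology manifolds (\cite{GrabeDS}) supply the left-hand side as $h_{d-i}(\Delta)-h_i(\Delta)=(-1)^i\binom{d}{i}\bigl(\widetilde{\chi}(\Delta)-(-1)^{d-1}\bigr)$; cancelling the nonzero factor $(-1)^i\binom{d}{i}$ reduces the claim to the numerical identity $\widetilde{\chi}_{i-1}(\Delta)-(-1)^d\widetilde{\chi}_{d-i-1}(\Delta)=\widetilde{\chi}(\Delta)+(-1)^d$. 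Since $\widetilde{\chi}(\Delta)-\widetilde{\chi}_{i-1}(\Delta)=\sum_{j=i}^{d-1}(-1)^j\widetilde{\beta}_j(\Delta)$, this is equivalent to $\sum_{j=i}^{d-1}(-1)^j\widetilde{\beta}_j(\Delta)=(-1)^{d-1}\bigl(\widetilde{\chi}_{d-i-1}(\Delta)+1\bigr)$. Substituting $j=d-1-k$ on the left and invoking Poincar\'e duality in the form $\widetilde{\beta}_{d-1-k}(\Delta)=\widetilde{\beta}_k(\Delta)$ for $1\le k\le d-2$ together with $\widetilde{\beta}_{d-1}(\Delta)=\widetilde{\beta}_0(\Delta)+1$ rewrites the left-hand side as $(-1)^{d-1}\sum_{k=0}^{d-i-1}(-1)^k\widetilde{\beta}_{d-1-k}(\Delta)=(-1)^{d-1}\bigl(\widetilde{\chi}_{d-i-1}(\Delta)+1\bigr)$, the extra $+1$ appearing because the $k=0$ term contributes $\widetilde{\beta}_{d-1}(\Delta)=1$ where the matching truncated Euler characteristic only sees $\widetilde{\beta}_0(\Delta)=0$. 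This finishes the proof.

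The main obstacle is not any single ingredient but the bookkeeping of the boundary contributions in degrees $-1$, $0$, and $d-1$: one must verify that the three stray constants---the $1$ coming from $h_0(\Delta)$, the $-(-1)^{d-1}$ coming from $\widetilde{\chi}(S^{d-1})$ in Klee's relation, and the Poincar\'e-duality defect $\widetilde{\beta}_{d-1}(\Delta)-\widetilde{\beta}_0(\Delta)=1$---cancel exactly. A secondary point to keep in mind is that the formula of Theorem \ref{BBMD} must not be applied verbatim at $i=d$ (there it would force $h_d^\sig(\Delta)=0$, which is false already for spheres); this is why the case $i=d$ is routed through the separate identities $h_d^\sig(\Delta)=h_d^\alg(\Delta)=\widetilde{\beta}_{d-1}(\Delta)$ and the connectedness and orientability of $\Delta$.
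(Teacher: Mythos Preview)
The paper does not supply its own proof of Theorem~\ref{manifoldsymmetry}; it is simply quoted from \cite[Proposition 1.1]{MN-bdry}. Your argument is correct and is exactly the combination of Theorem~\ref{BBMD}, Poincar\'e duality, and Klee's Dehn--Sommerville relations that the sentence preceding the theorem advertises, so there is nothing to compare against beyond that hint.

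You have also correctly identified and worked around a genuine imprecision in the paper's statement of Theorem~\ref{BBMD}: taken literally at $i=d$, that formula would force $h_d^\sig(\Delta)=h_d(\Delta)+(-1)^d\widetilde{\chi}(\Delta)=0$ for every Buchsbaum complex, whereas in fact $h_d^\sig(\Delta)=h_d^\alg(\Delta)=\widetilde{\beta}_{d-1}(\Delta)$. One small point in your justification of the latter: Gr\"abe's theorem by itself only bounds the degrees of $\M^i(\emptyset)$, not of $\M^0(\hat{\Theta}_i)$. To see that $\M^0(\hat{\Theta}_i)$ is concentrated in degrees $\le d-1$ you must iterate the long exact sequence of Lemma~\ref{LESLemma}: each passage from $\M^\bullet(j-1)$ to $\M^\bullet(j)$ raises the top possible degree by at most one, starting from degree $0$ at $j=0$. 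With that clarification your handling of the case $i=d$ is sound, and the remaining computation---reducing the claim for $1\le i\le d-1$ to the identity
\[
\sum_{j=i}^{d-1}(-1)^j\widetilde{\beta}_j(\Delta)=(-1)^{d-1}\bigl(\widetilde{\chi}_{d-i-1}(\Delta)+1\bigr)
\]
and verifying it via Poincar\'e duality together with $\widetilde{\beta}_{d-1}(\Delta)=\widetilde{\beta}_0(\Delta)+1$---is carried out correctly.
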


 In a forthcoming paper of Sawaske and Xue, Klee's relations have been greatly generalized to account for all pure simplicial complexes through the following theorem (here $\mathbb{S}^i$ denotes a sphere of dimension $i$).

\begin{theorem}\label{DSThm}
	Let $\Delta$ be a pure $(d-1)$-dimensional simplicial complex. Then
	\[
	h_{d-j}(\Delta)-h_j(\Delta)=(-1)^j\sum_{F\in \Delta}\binom{d-|F|}{j}\left(\widetilde{\chi}(\lk_\Delta F)-\widetilde{\chi}(\mathbb{S}^{d-1-|F|})\right)
	\]
	for $j=0, \ldots, d$.
\end{theorem}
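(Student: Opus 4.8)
The plan is to recast the whole family of numerical identities as a single equality of polynomials in $t$, and then verify that equality by expanding both sides in terms of the face numbers of $\Delta$. Write $h(\Delta,t)=\sum_{i=0}^d h_i(\Delta)t^i$ and set
\[
\Phi(\Delta,t):=t^d\,h(\Delta,1/t)-h(\Delta,t)=\sum_{j=0}^d\bigl(h_{d-j}(\Delta)-h_j(\Delta)\bigr)t^j,
\]
the second equality holding because $t^d h(\Delta,1/t)=\sum_i h_i(\Delta)t^{d-i}$ (reindex by $j=d-i$). Multiplying the conjectured right-hand side for index $j$ by $t^j$, summing over $j$, and using the binomial theorem $\sum_j\binom{n}{j}(-t)^j=(1-t)^n$ together with $\widetilde{\chi}(\mathbb{S}^n)=(-1)^n$ collapses the assertion to the single polynomial identity
\[
\Phi(\Delta,t)=\sum_{F\in\Delta}\bigl(\widetilde{\chi}(\lk_\Delta F)-(-1)^{d-1-|F|}\bigr)(1-t)^{d-|F|},
\]
so it suffices to establish this.

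For the left-hand side I would start from the identity $h(\Delta,t)=\sum_{k=0}^d f_{k-1}(\Delta)\,t^k(1-t)^{d-k}$ (immediate from the definition of $h_i(\Delta)$ by interchanging summations, or from Theorem~\ref{HilbSeries}). Substituting $1/t$ for $t$ and clearing denominators gives $t^d h(\Delta,1/t)=\sum_k f_{k-1}(\Delta)(t-1)^{d-k}=\sum_k f_{k-1}(\Delta)(-1)^{d-k}(1-t)^{d-k}$, hence
\[
\Phi(\Delta,t)=\sum_{k=0}^d f_{k-1}(\Delta)(1-t)^{d-k}\bigl((-1)^{d-k}-t^k\bigr).
\]

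For the right-hand side I would group faces by cardinality. Combining the standard identity $\widetilde{\chi}(\Gamma)=\sum_{\ell\ge0}(-1)^{\ell-1}f_{\ell-1}(\Gamma)$ (alternating count of faces, empty face included) with the elementary double count $\sum_{|F|=k}f_{\ell-1}(\lk_\Delta F)=\binom{k+\ell}{k}f_{k+\ell-1}(\Delta)$ yields $\sum_{|F|=k}\widetilde{\chi}(\lk_\Delta F)=-\sum_{m\ge k}(-1)^{m-k}\binom{m}{k}f_{m-1}(\Delta)$; since there are exactly $f_{k-1}(\Delta)$ faces of cardinality $k$, the right-hand side is now written purely in terms of the $f_{m-1}(\Delta)$. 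Finally, expanding both sides in the basis $\{(1-t)^{d-\kappa}\}_{0\le\kappa\le d}$ — on the left using $t^k=\sum_i\binom{k}{i}(-1)^i(1-t)^i$ — and comparing the coefficient of $(1-t)^{d-\kappa}$, the sums $\sum_{m\ge\kappa}(-1)^{m-\kappa}\binom{m}{\kappa}f_{m-1}(\Delta)$ on the two sides cancel and one is left with $(-1)^{d-\kappa}f_{\kappa-1}(\Delta)=-(-1)^{d-1-\kappa}f_{\kappa-1}(\Delta)$, which is trivially true. (Equivalently, staying in the monomial basis, matching the coefficient of $f_{m-1}(\Delta)$ in the original statement reduces it to the Vandermonde-type identity $\sum_{k=0}^m(-1)^k\binom{m}{k}\binom{d-k}{j}=\binom{d-m}{j-m}$, provable by expanding $\binom{d-k}{j}$ via Vandermonde's convolution.)

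I do not expect any genuine obstacle: the only real content is the generating-function reformulation in the first paragraph, after which the verification is routine sign-and-binomial bookkeeping — and, notably, the purity hypothesis is never used. If one wished to avoid generating functions, an alternative is induction on $\dim\Delta$ using the relations expressing $\sum_{v\in V}h_i(\lk_\Delta v)$ through the $h$-vector of $\Delta$, but the computation above is the most direct route. The one point requiring care is keeping the summation ranges (and the degenerate face sizes $|F|=0$ and $|F|=d$, whose links are $\Delta$ and $\{\emptyset\}$) straight throughout.
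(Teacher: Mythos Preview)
Your argument is correct. The generating-function repackaging, the substitution $t\mapsto 1/t$ on the $h$-polynomial, the double count $\sum_{|F|=k}f_{\ell-1}(\lk_\Delta F)=\binom{k+\ell}{k}f_{k+\ell-1}(\Delta)$, and the final coefficient comparison in the basis $\{(1-t)^{d-\kappa}\}$ all check out exactly as you describe. Your remark that purity is never invoked is also right: the identity is a purely enumerative statement about face numbers, and the hypothesis is cosmetic (it simply guarantees that every link has the ``expected'' dimension so that the comparison with $\widetilde{\chi}(\mathbb{S}^{d-1-|F|})$ is geometrically meaningful).

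There is, however, nothing to compare against: the present paper does not prove Theorem~\ref{DSThm}. It is quoted in Section~\ref{sect:conclusion} as a result from a forthcoming paper of Sawaske and Xue, with no argument given here. So your proof is not an alternative to the paper's --- it is simply a complete, self-contained proof of a statement the paper only cites.
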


We are hopeful that combining these relations with a full computation of $h^\sig(\Delta)$ will allow for some version of symmetry to hold in analogy with Theorem \ref{manifoldsymmetry}. Already with the case covered in this paper, such symmetry can be stated as follows.

\begin{corollary}
	Let $\Delta$ be a pure and connected $(d-1)$-dimensional simplicial complex, and assume that the link of each vertex of $\Delta$ is connected as well. Then
	\[
	h_1^\sig(\Delta) - h_{d-1}^\sig(\Delta) = (-1)^{d-1}\sum_{F\in \Delta}\binom{d-|F|}{d-1}\left(\beta_{d-1-|F|}(\lk_\Delta F) - \beta_0(\lk_\Delta(F))\right).
	\]
\end{corollary}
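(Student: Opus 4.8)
The plan is to specialize Theorem \ref{sigmamodThm} to compute both $h_{d-1}^\sig(\Delta)$ and $h_1^\sig(\Delta)$ under the stated hypotheses, then subtract. For $h_{d-1}^\sig(\Delta)$, Theorem \ref{sigmamodThm} already gives
\[
h_{d-1}^\sig(\Delta) = h_{d-1}(\Delta) + (-1)^{d-1}\sum_{F\in\Delta}\binom{d-|F|}{d-1}\widetilde{\chi}_{d-2-|F|}(\lk_\Delta F),
\]
and as noted in the remarks following that theorem, the binomial $\binom{d-|F|}{d-1}$ vanishes unless $|F|\in\{0,1\}$, so only the empty face and the vertices contribute. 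I would record this reduced sum explicitly, keeping the truncated Euler characteristics $\widetilde{\chi}_{d-2}(\Delta)$ and $\widetilde{\chi}_{d-3-|F|}(\lk_\Delta F)$ for vertices $F=\{v\}$.

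Next I would compute $h_1^\sig(\Delta)$. The key observation is that for small $i$ the reduced algebraic $h$-vector is forced by low-degree considerations: $h_0^\sig(\Delta)=1$ always, and $h_1^\sig(\Delta)$ should equal $h_1(\Delta) = f_0(\Delta)-d = n-d$ adjusted by the relevant homology. I expect that the hypotheses ``$\Delta$ connected'' and ``each vertex link connected'' are exactly what force the correction term at $i=1$ to take the clean shape predicted by plugging $i=1$ into the conjectural formula of Conjecture \ref{mainconjecture}: namely $h_1^\sig(\Delta) = h_1(\Delta) - \sum_{F\in\Delta}\binom{d-|F|}{1}\widetilde{\chi}_{-|F|}(\lk_\Delta F)$, where $\widetilde{\chi}_{-|F|}(\lk_\Delta F)$ is nonzero only for $|F|\le 1$ and then equals $-\widetilde{\beta}_{-1}$ or $\widetilde{\beta}_{-1}-\widetilde{\beta}_0$ — and connectivity of $\Delta$ and of the vertex links kills the $\widetilde\beta_0$ contributions, so that in fact $h_1^\sig(\Delta)=h_1(\Delta)$. (Equivalently, one can argue directly: $\Sigma(\Theta;\field[\Delta])_1 = \Theta\field[\Delta]_0$ because depth considerations coming from connectivity, via \cite[Corollary 2.6]{HibiDepth}, give $\K^0(1)=0$ and the vertex-link connectivity controls $\K^0(2)$ in degree $1$; I would spell out whichever of these two routes is cleanest.)

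With both endpoints in hand, the corollary follows by subtraction. Writing $h_1^\sig(\Delta)=h_1(\Delta)$ and substituting the reduced expression for $h_{d-1}^\sig(\Delta)$, the difference $h_1^\sig(\Delta)-h_{d-1}^\sig(\Delta)$ equals
\[
\bigl(h_1(\Delta)-h_{d-1}(\Delta)\bigr) - (-1)^{d-1}\sum_{F\in\Delta}\binom{d-|F|}{d-1}\widetilde{\chi}_{d-2-|F|}(\lk_\Delta F).
\]
Now I would invoke the Dehn--Sommerville-type relations of Theorem \ref{DSThm} with $j=d-1$ to rewrite $h_1(\Delta)-h_{d-1}(\Delta)$ as $(-1)^{d-1}\sum_F\binom{d-|F|}{d-1}\bigl(\widetilde\chi(\lk_\Delta F)-\widetilde\chi(\mathbb{S}^{d-1-|F|})\bigr)$. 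Combining the two sums over $F$ (again only $|F|\le 1$ survives), the full Euler characteristic $\widetilde\chi(\lk_\Delta F)$ cancels against $-\widetilde\chi_{d-2-|F|}(\lk_\Delta F)$ except for the single top term $(-1)^{d-1-|F|}\widetilde\beta_{d-1-|F|}(\lk_\Delta F)$, while $-\widetilde\chi(\mathbb{S}^{d-1-|F|})=(-1)^{d-|F|}$ matches the contribution of $\widetilde\beta_0$ once connectivity of $\lk_\Delta F$ (for $|F|\le 1$) is used to set all intermediate Betti numbers aside. After tracking the signs $(-1)^{d-1}\cdot(-1)^{d-1-|F|}=(-1)^{-|F|}=(-1)^{|F|}$ and absorbing them, one arrives at the claimed
\[
h_1^\sig(\Delta)-h_{d-1}^\sig(\Delta)=(-1)^{d-1}\sum_{F\in\Delta}\binom{d-|F|}{d-1}\bigl(\beta_{d-1-|F|}(\lk_\Delta F)-\beta_0(\lk_\Delta F)\bigr).
\]
The main obstacle I anticipate is the first half of the second paragraph: justifying rigorously that $h_1^\sig(\Delta)=h_1(\Delta)$ under precisely the connectivity hypotheses given, since this requires a careful low-degree analysis of $\Sigma(\Theta;\field[\Delta])$ (or of the submodules $\K^0(1),\K^0(2)$) rather than a direct appeal to Theorem \ref{sigmamodThm}; everything after that is bookkeeping with binomials, signs, and the two cited symmetry/Euler-characteristic identities.
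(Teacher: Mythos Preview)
The paper states this corollary without proof, presenting it as a direct consequence of Theorem~\ref{sigmamodThm} and Theorem~\ref{DSThm}. Your overall strategy---compute $h_{d-1}^\sig(\Delta)$ from Theorem~\ref{sigmamodThm}, establish $h_1^\sig(\Delta)=h_1(\Delta)$ under the connectivity hypotheses, then invoke Theorem~\ref{DSThm} with $j=d-1$ and combine---is exactly the intended route, and the final bookkeeping with Euler characteristics and binomials is essentially as you describe.

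The gap you yourself flag is real, and your two suggested fixes do not close it. Route~(a), appealing to Conjecture~\ref{mainconjecture} at $i=1$, is not admissible: that statement is a conjecture in the paper, not a theorem. Route~(b) is aimed at the wrong target. The modules $\K^0(1)$ and $\K^0(2)$ govern $h_1^{\alg}(\Delta)$ through Lemma~\ref{HilbertSeries}, and indeed $h_1^{\alg}(\Delta)=h_1(\Delta)$ holds for \emph{every} complex with no hypotheses at all (the coefficient of $t^1$ in $\sum_j (1-t)^{d-j}t\,\Hilb(\K^0(j),t)$ is $\sum_j \D(\K^0(j)_0)=0$). What you actually need is $\bigl(\Sigma(\Theta;\field[\Delta])/\Theta\bigr)_1=0$, which by Lemma~\ref{lem1} amounts to controlling the degree-$1$ piece of $\M^0(\hat\Theta_i)$ (equivalently, the degree-$2$ first Koszul homology of $\Theta$ on $\field[\Delta]$), not $\K^0(1)$ or $\K^0(2)$. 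The depth-$\ge 2$ consequence of connectivity (via Hibi) gives $\M^0(\emptyset)=\M^1(\emptyset)=0$, which is a useful starting point, but you then need to propagate this through the long exact sequences of Lemma~\ref{LESLemma} up to $\M^0(\hat\Theta_i)$ in degree~$1$; your sketch does not do this. Until that step is supplied, the argument is incomplete.
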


Hence, the discrepancy between $h_1^\sig(\Delta)$ and $h_{d-1}^\sig(\Delta)$ measures the discrepancy in $\Delta$ satisfying Poincar\'{e} duality in the top and bottom dimensions. In fact, if $\Delta$ is also normal, then the use of intersection homology theory, in particular \cite{GM}[Theorem, Section 4.3], shows that this difference equivalently measures the accuracy of intersection homology.

\begin{corollary}
	Let $\Delta$ be a pure, normal, and connected $(d-1)$-dimensional simplicial complex, and assume that the link of each vertex of $\Delta$ is normal and connected as well. Then
	\[
	h_1^\sig(\Delta) - h_{d-1}^\sig(\Delta) = (-1)^{d-1}\sum_{F\in \Delta}\binom{d-|F|}{d-1}\left(\D(IH_0^{\overline{0}}(\lk_\Delta F)) - \beta_0(\lk_\Delta(F))\right).
	\]
\end{corollary}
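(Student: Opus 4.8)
The plan is to read the claim off the immediately preceding corollary after a single notational substitution. The hypotheses imposed here — $\Delta$ pure, normal, and connected, with the link of every vertex normal and connected — imply in particular those of the preceding corollary, so that corollary applies and gives
\[
h_1^\sig(\Delta)-h_{d-1}^\sig(\Delta)=(-1)^{d-1}\sum_{F\in\Delta}\binom{d-|F|}{d-1}\bigl(\beta_{d-1-|F|}(\lk_\Delta F)-\beta_0(\lk_\Delta F)\bigr).
\]
Since $\binom{d-|F|}{d-1}=0$ unless $|F|\le 1$, the only faces that matter are $F=\emptyset$, for which $\lk_\Delta F=\Delta$, and $F=\{v\}$, for which $\lk_\Delta F=\lk_\Delta v$; in either case the hypotheses furnish a normal, connected complex $Y$ of dimension $n:=d-1-|F|$. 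As the comparison term $\beta_0(\lk_\Delta F)$ is left untouched, the whole statement reduces to the single assertion that for such a $Y$ one has
\[
\beta_n(Y)=\D\bigl(IH_0^{\overline 0}(Y)\bigr),
\]
after which the new formula follows from the old one term by term.

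To prove this assertion I would invoke Goresky--MacPherson's description of zero-perversity intersection homology of normal pseudomanifolds, specifically \cite[Theorem, Section~4.3]{GM}. The content I want is that for a normal $n$-dimensional complex $Y$ the relevant zero-perversity intersection-homology group is canonically identified with ordinary top (co)homology of $Y$ — equivalently, since the singular locus of $Y$ has codimension at least two, with the top (co)homology of the regular part of $Y$ — so that $\D\bigl(IH_0^{\overline 0}(Y)\bigr)=\dim_\field H^n(Y;\field)=\beta_n(Y)$. Normality is precisely what is needed to make this identification hold on $Y$ itself, which is why it has been added to the hypotheses here relative to the preceding corollary, while connectedness fixes $\beta_0(Y)=\dim_\field H^0(Y;\field)=1$; thus the right-hand side literally records the failure of Poincar\'e duality in the top and bottom degrees of the relevant links — i.e., the accuracy of intersection homology there — as asserted.

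The one delicate point, and the only real obstacle, is bookkeeping with conventions: pinning down exactly which statement of \cite[Section~4.3]{GM} is being used, reconciling its grading and perversity normalizations (homological versus cohomological indexing of the intersection complex; note that over a field $\dim_\field H^n=\dim_\field H_n$, so orientability plays no role in the bare dimension count) with the notation $IH_0^{\overline 0}$ adopted here, and confirming that ``normal'' is meant in the sense under which $\Delta$ and each of its vertex links are genuine normal pseudomanifolds to which that machinery applies. Granting the correct citation, nothing further is needed beyond the substitution described in the first paragraph.
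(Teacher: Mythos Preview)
Your proposal is correct and matches the paper's own approach: the paper does not give a proof of this corollary but simply remarks, immediately before stating it, that when $\Delta$ is normal one applies \cite[Theorem, Section~4.3]{GM} to the preceding corollary, which is exactly the substitution you describe. Your caveat about pinning down the grading and perversity conventions in the citation is well taken, but the paper itself leaves that bookkeeping to the reader.
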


\section*{Acknowledgments}
The author would like to thank Isabella Novik and Steve Klee, both of whom listened to multiple presentations of these results at all stages, offering valuable comments and suggestions throughout.

\bibliographystyle{alpha}
\bibliography{h-vect_and_lc-biblio}
\end{document}